\documentclass[12pt]{amsart}

\usepackage[all,color]{xy}
\usepackage{pb-diagram}
\usepackage[mathscr]{eucal}

\usepackage{hyperref}
\hypersetup{
    colorlinks=true, 
    linktoc=all,     
    linkcolor=blue,  
}
\usepackage{amsmath,amsfonts,amsthm,amssymb,amsthm}
\usepackage{tikz,quiver}
\usepackage{xcolor}
\usepackage{float}
\usepackage{pifont}
\usepackage{combelow} 

\RequirePackage{ifthen,setspace,enumitem,tikz}
\usetikzlibrary{arrows}

\DeclareMathAlphabet{\mathpzc}{OT1}{pzc}{m}{it}

\theoremstyle{plain}

\newtheorem{lem}{Lemma}[section]
\newtheorem{cor}{Corollary}[section]
\newtheorem{theorem}{Theorem}[section]
\newtheorem{prop}{Proposition}[section]
\newtheorem{dfn}{Definition}[section]

\newcommand{\pare}[1]{\left( #1 \right)}
\newcommand{\brac}[1]{\left\{ #1 \right\}}
\newcommand{\sqbr}[1]{\left[ #1 \right]}
\newcommand{\norm}[1]{\left\| #1 \right\|}
\newcommand{\inner}[1]{\langle #1 \rangle}
\newcommand{\abs}[1]{\left| #1 \right|}
\newcommand{\set}[2]{\left\{ #1 : #2 \right\}}

\newcommand{\C}{\mathbb{C}}

\newcommand{\Q}{\mathbb{Q}}

\newcommand{\del}{\partial}
\newcommand{\delbar}{\bar{\del}}
\newcommand{\pot}{\sqrt{-1}\del\delbar}
\newcommand{\Pone}{\mathbb{P}^1}
\newcommand{\OPone}{\mathcal{O}_{\Pone}\pare{-1}}
\newcommand{\Kahler}{Kähler }
\newcommand{\MAlow}{\operatorname{MA}_-}
\newcommand{\MAup}{\operatorname{MA}_+}
\renewcommand{\epsilon}{\varepsilon}

\numberwithin{equation}{section}

\hypersetup{pdftitle={Monge-Ampère degenerations and conifold contractions in Fujiki class C},
 pdfauthor={Riley Guyett, and Ryan Mc Gowan}}
\begin{document}

\title[Conifold contractions in Fujiki class $\mathcal{C}$]{Monge-Ampère degenerations and conifold contractions in Fujiki class $\mathcal{C}$}
\author{Riley Guyett and Ryan Mc Gowan}
\raggedbottom
\address{Department of Mathematics, Rutgers University, Piscataway, NJ 08854}
\email{riley.guyett@rutgers.edu}
\address{Department of Mathematics, Rutgers University, Piscataway, NJ 08854}
\email{rmm396@math.rutgers.edu}

\begin{abstract}
Given a $\mathbb{Q}$-Gorenstein normal projective Calabi-Yau variety $Y$ with ordinary double points as singularities and a small crepant resolution $\pi\colon X\to Y$ with $X$ in the Fujiki class $\mathcal{C}$, we establish uniform estimates away from the exceptional locus where the given metrics may degenerate. When $Y$ is a threefold, we prove that these metrics also converge in a Gromov-Hausdorff sense to the metric completion of $Y$ on the regular locus. In particular, our results extend the work of Song on a conjecture of Candelas and de la Ossa.
\end{abstract}
\maketitle 

\section{Introduction}

Yau's solution of the Calabi conjecture \cite{Yau1978} has led to a rich theory of the complex Monge-Amp\`ere equation in the K\"ahler setting, and helped bridge many gaps between complex geometry and string theory. For instance, Reid's fantasy, originally posed by Reid in \cite{Re} and building upon the work of \cite{clemens, Fr, hirzebruch,T1}, conjectures that all Calabi-Yau threefolds are connected via some geometric transitions in some bigger moduli space. The work of Strominger \cite{Strominger} highlighted the usefulness of a specific geometric transition, conifold transitions, in a physical context. Later, Candelas and de la Ossa \cite{CaO} conjectured that the algebraic model posed by Reid should also have an analytic and geometric interpretation, and Candelas, Green and H\"ubsch described such transitions in \cite{CGH}. Rong and Zhang \cite{RongZhang2011} and later Song \cite{Song} proved that Calabi-Yau threefolds can be connected by a continuous path, in a Gromov-Hausdorff sense, of Ricci-flat Calabi-Yau metrics via conifold transitions.

Moving to the hermitian situation introduces many subtleties not present in the K\"ahler case, since one now must deal with both torsion of the metrics and defining the Monge-Amp\`ere mass for a nonclosed form. For the former, Tosatti and Weinkove \cite{TW-AJM, TW-JAMS} extended Yau's results by studying Chern-Ricci-flat metrics, rather than the Ricci-flat metrics Yau originally produced in \cite{Yau1978}, and furthermore obtained estimates for the hermitian complex Monge-Amp\`ere equation on compact hermitian manifolds. Tosatti also addressed how to handle degenerating metrics in \cite{Tosatti2009}. For the latter, pluripotential methods pioneered by Bedford and Taylor \cite{BT} and expanded upon by Ko{\l}odziej \cite{kolodziej,kolodziej2} have proven to be powerful tools in complex geometry, for instance, the works \cite{B, BEGZ, Demailly,EGZ, PSS,PS,PS2}. Such methods have been employed as in \cite{CT,GL22,GL23} to bound the Monge-Amp\`ere masses for such degenerate equations. More recently, Boucksom, Guedj, and Lu \cite{BGL} extended the theory of non-pluripolar products to possibly nonclosed currents on compact Hermitian manifolds with the bounded mass property, and established existence results for degenerate Monge-Amp\'ere equations in big classes. Their framework provides a global interpretation of the limiting equation arising from the Hermitian degenerations considered here. In this paper, we synthesize these methodologies described above to understand conifold transitions when the resolution $X$ is in Fujiki class $\mathcal{C}$, and hence need not be K\"ahler. 

We now state the main results of the paper. Let $Y$ be a normal irreducible projective $\Q$-Gorenstein variety of complex dimension $n$ with torsion canonical divisor and let $\pi\colon X\to Y$ be a smooth small crepant resolution and $E=\mathrm{Exc}(\pi)$. By crepancy, we have a smooth positive form $\Omega$ on $X$ normalized by the conditions
\[
\pot\log\Omega=0, \qquad \int_X \Omega=1,
\]
and define $\Omega_Y=\pi_*\Omega$. Fix a smooth hermitian form $\omega_Y$ on $Y$, and set $\theta = \pi^*\omega_Y$. In particular, $\theta$ is a smooth and semipositive $\pare{1,1}$-form on $X$, which is strictly positive on $X\setminus E$. For every $0<\varepsilon\leq 1$ and given a hermitian metric $\eta$ on $X$, by the work of \cite{TW-JAMS} there exists $\varphi_\varepsilon$ and $c_\varepsilon$ such that
\begin{equation}\label{cma}
    \tilde{\omega}_\varepsilon=\theta+\varepsilon\eta+\pot \varphi_\varepsilon>0, \qquad \tilde{\omega}_\varepsilon ^n =c_\varepsilon\Omega, \qquad \sup_X \varphi_\varepsilon=0 
\end{equation}
such that $\tilde{\omega}_\varepsilon$ is Chern-Ricci-flat. Our first result is the following: 
\begin{theorem}\label{MainTheorem1}
    Let $\pi:X\to Y$ be a small smooth crepant resolution. For $0<\varepsilon\leq 1$, solutions $(\varphi_\varepsilon,c_\varepsilon)$ to Equation \ref{cma} satisfy $c_\epsilon\searrow c_0$ and the uniform estimate
    \[
    \sup_\varepsilon\|\varphi_\varepsilon\|_{L^\infty(X)}<\infty.
    \]
    On a compact set $K\Subset X\setminus E$, for $k\geq 0$, we have higher-order estimates
    \[
    \sup_\varepsilon\|\varphi_\varepsilon\|_{C^k(K,\eta)}<\infty.
    \]
    For every sequence $\varepsilon_j\searrow0$, there is a subsequence along which the potentials $\varphi_{\varepsilon_j}$ converge in $L^1(X)\cap C_{loc}^\infty(X\setminus E)$ to a bounded solution $\varphi_0\in \mathrm{PSH}(X,\theta)$ of
    \[
    \langle(\theta+\pot\varphi_0)^n \rangle=c_0\Omega,\qquad \sup_X \varphi_0=0,
    \]
    where the angle brackets denote the nonclosed non-pluripolar product of Boucksom-Guedj-Lu \cite{BGL}. 
\end{theorem}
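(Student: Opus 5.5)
We first control the constants $c_\varepsilon$, then prove the uniform $L^\infty$ bound by Hermitian pluripotential theory, then prove local higher-order estimates away from $E$ by a Tsuji-type Laplacian estimate, and finally pass to the limit.

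\textbf{Step 1: the constants $c_\varepsilon$.} For an upper bound, the Tosatti--Weinkove / Guedj--Lu mass bounds give $c_\varepsilon \le \sup_\varphi \int_X(\theta+\eta+\pot\varphi)^n$ over bounded $(\theta+\eta)$-psh $\varphi$. This is finite since $X$ is compact Hermitian, using the bounded mass property of $\eta$ on the Fujiki class $\mathcal{C}$ manifold $X$. For monotonicity, compare the equations for $\varepsilon'<\varepsilon$. A solution for $\varepsilon'$ is a subsolution for $\varepsilon$, because $\theta+\varepsilon'\eta+\pot\varphi_{\varepsilon'}\le \theta+\varepsilon\eta+\pot\varphi_{\varepsilon'}$. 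The domination principle for Hermitian Monge--Amp\`ere equations (Guedj--Lu) then yields $c_{\varepsilon'}\le c_\varepsilon$, so $c_\varepsilon\searrow c_0\ge0$. For positivity $c_0>0$, we want a lower bound on the Monge--Amp\`ere volume of $\{\theta\}$. Since $\theta=\pi^*\omega_Y$ and $Y$ is projective, $\omega_Y$ dominates a multiple of a K\"ahler form $\omega_{FS}|_Y$ on $Y$. Guedj--Lu's $\underline{v}_-$-type lower bound for $\pi^*\omega_{FS}$, which is semipositive, closed and big, then gives $c_\varepsilon\ge \int_X\pi^*\omega_{FS}^n>0$ after scaling.

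\textbf{Step 2: uniform $L^\infty$ bound.} Here we would use Ko{\l}odziej's capacity method in the Hermitian setting, as in \cite{GL22,GL23}. The densities $c_\varepsilon\Omega$ are uniformly bounded in $L^\infty$. The reference forms $\theta+\varepsilon\eta$ all dominate $\theta$, which is semipositive and big, since it is the pullback of a Hermitian form under a bimeromorphic map. So we need a uniform capacity-volume inequality $\int_K\Omega\le C\,\mathrm{Cap}_{\theta}(K)^{2}$ together with the mass lower bound from Step 1. We would obtain it by passing through $Y$: on $Y$ the form $\omega_Y$ is Hermitian, capacities push forward, and the local Ko{\l}odziej estimate applies since $\Omega_Y$ is in $L^p$ near the nodes. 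With this in hand, the standard Hermitian $L^\infty$ argument gives $\|\varphi_\varepsilon\|_{L^\infty}\le C$. The uniform normalization $\sup\varphi_\varepsilon=0$ together with an $L^1$ compactness of $\theta+\eta$-psh functions handles the normalization. I expect this step, in particular the uniformity of the capacity comparison as $\varepsilon\to0$ with non-closed $\theta$, to be the main obstacle. The fallback is to reduce to the fixed big form $\theta$ and cite the uniform estimates of \cite{GL23} for families $\theta+\varepsilon\eta$.

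\textbf{Step 3: higher-order estimates on $K\Subset X\setminus E$.} Since $\pi$ is small and $Y$ is projective, there is a function $\psi$ that is $\theta$-psh, smooth on $X\setminus E$, with $\psi\to-\infty$ on $E$ and $\theta+\pot\psi\ge\delta\hat\omega$ for a Hermitian $\hat\omega$. It comes from the pullback of an ample divisor's section vanishing along $\pi(E)$, via a Kodaira-type lemma. We then apply the Chern--Laplacian estimate of Tosatti--Weinkove/Phong--Sturm to $\log\mathrm{tr}_{\hat\omega}\tilde\omega_\varepsilon - A(\varphi_\varepsilon-\psi)$. The torsion terms are controlled as in \cite{TW-JAMS}, using Chern-Ricci flatness and the uniform $L^\infty$ bound. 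This yields $\mathrm{tr}_{\hat\omega}\tilde\omega_\varepsilon\le Ce^{-A\psi}$, and hence $C^{-1}\hat\omega\le\tilde\omega_\varepsilon\le C\hat\omega$ on $K$. Local Evans--Krylov (or the Hermitian $C^{2,\alpha}$ estimates) and Schauder bootstrapping then give all $C^k(K)$ bounds.

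\textbf{Step 4: convergence.} By Arzel\`a--Ascoli and diagonal extraction over an exhaustion of $X\setminus E$, a subsequence converges in $C^\infty_{loc}(X\setminus E)$. Compactness of $\mathrm{PSH}(X,\theta+\eta)$ in $L^1$ gives $L^1$ convergence to $\varphi_0\in\mathrm{PSH}(X,\theta)$, which is bounded by Step 2. We have $\sup\varphi_0=0$ by Hartogs' lemma and uniform convergence near maximum points, or else because $E$ is pluripolar. On $X\setminus E$ the equation $(\theta+\pot\varphi_0)^n=c_0\Omega$ holds smoothly. The non-pluripolar product of \cite{BGL} puts no mass on the pluripolar set $E$ and agrees with the classical product on the open set where $\varphi_0$ is smooth. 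Hence $\langle(\theta+\pot\varphi_0)^n\rangle=c_0\Omega$ globally.
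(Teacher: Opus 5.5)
Your outline has the same architecture as the paper's proof: the bounds $\MAlow(\theta)\le c_\varepsilon\le\MAup(\theta+\eta)$ from monotonicity and the class-$\mathcal{C}$ bounded-mass property, a uniform $L^\infty$ bound from Guedj--Lu, a $C^2$ estimate off $E$ driven by a Kähler current with poles on $E$, Evans--Krylov and Schauder, and a limit in which the non-pluripolar product does not charge $E$.

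\textbf{The gap is your construction of $\psi$ in Step 3.} Kodaira's lemma is not available on $X$. A small resolution in class $\mathcal{C}$ need not be Kähler, let alone projective. Also, since $\operatorname{codim}E\ge 2$, there is no exceptional divisor to subtract from $\pi^*A$. Even when $X$ is projective, Kodaira's lemma, or a single section $s\in H^0(Y,mA)$ vanishing on $\pi(E)$, produces $\psi=\frac1m\log|\pi^*s|^2$ with poles along an entire divisor $D\supsetneq E$. Then $\psi$ is not smooth on $X\setminus E$, and your estimate would only hold on compact subsets of $X\setminus D$. Moreover, $\theta+c\pot\psi=(\theta-c\,\pi^*\omega_A)+\frac{c}{m}[\pi^*\operatorname{div}s]$ gains no positivity in the fibre directions of $\pi$ along $E$, so it is not even a Kähler current.

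What you need is a Kähler current in the class of the nef and big form $\alpha=a\pi^*\kappa_Y\le\theta$ whose singular set is exactly $E$, i.e.\ $E_{nK}(\alpha)=\mathrm{Null}(\alpha)=E$. On a manifold in class $\mathcal{C}$ this is the Collins--Tosatti theorem (Proposition~\ref{NullalphaisE}), and it is precisely how the paper produces $\rho$ in Lemma~\ref{MAposandrhoexists}. A hands-on substitute also works. Take a smooth projective modification $\mu\colon Z\to X$ from Hironaka's Chow lemma, and a Kähler form on $Z$ in a class $\mu^*\pi^*(mA)-F$ with $F$ effective and exceptional over $Y$. Push it forward to $X$: because $\pi$ is small, $F$ is $\mu$-exceptional, so $\mu_*[F]=0$.

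\textbf{Two smaller points.} First, in the Hermitian setting the purely linear test function $\log\operatorname{tr}_{\hat\omega}\tilde\omega_\varepsilon-A(\varphi_\varepsilon-\psi)$ does not close. At the maximum point, the torsion cross term in $|\partial S|^2_{\tilde\omega_\varepsilon}/S^2$ becomes $\frac{2A}{S}\operatorname{Re}\sum_{i,j}\lambda_i^{-1}T^\varepsilon_{ij\bar j}u_{\bar i}$. Without a gradient bound on $u$, this can only be absorbed by a positive multiple of $|\partial u|^2_{\tilde\omega_\varepsilon}$, and such a term appears in the Laplacian of the test function only when $\gamma''<0$. The paper takes $H=\log S-\gamma(u)$ with $\gamma(u)=Au+\log u$ and $u=\varphi_\varepsilon-\rho+C+2$. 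Since $u\to+\infty$ along $E$, the leftover error $CR(1+u^2)/S^2$ is removed by a contradiction argument using $S/u\to\infty$.

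Second, in Step 2 your fallback is exactly the paper's route, and no capacity comparison on $Y$ is needed. Theorem~A of Guedj--Lu only requires two uniform bounds. One is on $c_\varepsilon/\MAlow(\theta_\varepsilon)\le\MAup(\theta_1)/\MAlow(\theta)$, which your Step 1 supplies. The other is on $\int_X(-u)^m\Omega$ over normalized $u\in\mathrm{PSH}(X,\theta_\varepsilon)$, which follows from $\mathrm{PSH}(X,\theta_\varepsilon)\subset\mathrm{PSH}(X,\theta_1)$ and uniform Skoda integrability for $\theta_1$. Your domination-principle proof that $c_\varepsilon$ is monotone is fine; the paper simply applies the maximum principle at a maximum of $\varphi_{\varepsilon'}-\varphi_\varepsilon$.
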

We also have the following special case:
\begin{cor}\label{KahlerMainTheorem1}
    When $\omega_Y$ is K\"ahler, the constant in Theorem \ref{MainTheorem1} obeys $c_0=\mathrm{Vol}(\theta)$, and $\varphi_\varepsilon$ converges to $\pi^*\psi_Y$, where $\psi_Y$ is the unique bounded solution to 
    \[
    (\omega_Y+\pot\psi_Y)^n =\left(\int_Y\omega_Y^n\right)\Omega,\qquad \sup_Y \psi_Y=0.
    \]
\end{cor}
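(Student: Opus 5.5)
When $\omega_Y$ is K\"ahler, $\theta=\pi^*\omega_Y$ is closed, so the plan is to identify the limit equation of Theorem \ref{MainTheorem1} with a classical Monge-Amp\`ere equation on $Y$ and then use uniqueness. The steps are: compute $c_0$ by a cohomological argument, descend $\varphi_0$ to $Y$, identify the limit equation as the classical one on $Y_{\mathrm{reg}}$, and upgrade subsequential convergence to full convergence.

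\emph{Computing $c_0$.} Integrating \eqref{cma} over $X$ and using $\int_X\Omega=1$ gives
\[
c_\varepsilon=\int_X(\theta+\varepsilon\eta+\pot\varphi_\varepsilon)^n .
\]
If $\eta$ is not closed, Stokes' theorem does not remove the $\pot\varphi_\varepsilon$ terms. So I would not compute $c_\varepsilon$ directly, but use the limit equation instead. Since $\theta$ is closed, $\varphi_0\in\mathrm{PSH}(X,\theta)\cap L^\infty$, and for closed forms the BGL non-pluripolar product agrees with the usual one. Moreover, $\theta$ is semipositive and big, with $\int_X\theta^n=\int_Y\omega_Y^n>0$. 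Therefore
\[
\int_X\langle(\theta+\pot\varphi_0)^n\rangle=\int_X\theta^n=\mathrm{Vol}(\theta),
\]
by the standard fact that bounded potentials have full Monge-Amp\`ere mass (Bedford-Taylor/BEGZ). Integrating the limit equation $\langle(\theta+\pot\varphi_0)^n\rangle=c_0\Omega$ then gives $c_0=\mathrm{Vol}(\theta)=\int_Y\omega_Y^n$.

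\emph{Descending to $Y$.} The fibers of $\pi$ over the singular points are connected compact subvarieties, since $Y$ is normal. On each fiber, $\theta$ restricts to zero, so $\varphi_0$ is plurisubharmonic there and hence constant along the fibers. Normality of $Y$, together with Grauert-Remmert type extension, then gives $\varphi_0=\pi^*\psi$ with $\psi\in\mathrm{PSH}(Y,\omega_Y)\cap L^\infty$ and $\sup_Y\psi=0$. Because $\pi$ is an isomorphism off $E$, and since $\pi(E)$ is a finite set and the non-pluripolar product charges no pluripolar set, the equation pushes forward to
\[
(\omega_Y+\pot\psi)^n=\Big(\int_Y\omega_Y^n\Big)\Omega_Y
\]
on $Y$. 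Here $\Omega$ is identified with $\Omega_Y=\pi_*\Omega$, which is the intended meaning of the statement.

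\emph{Uniqueness and full convergence.} By the uniqueness theorem for bounded solutions of degenerate complex Monge-Amp\`ere equations on normal K\"ahler spaces (EGZ, Ko{\l}odziej's domination principle), the normalized bounded solution $\psi_Y$ is unique. Every subsequential limit $\varphi_0$ from Theorem \ref{MainTheorem1} therefore equals $\pi^*\psi_Y$. Together with the compactness in Theorem \ref{MainTheorem1}, this shows that the whole family $\varphi_\varepsilon$ converges to $\pi^*\psi_Y$, in $L^1(X)$ and in $C^\infty_{loc}(X\setminus E)$.

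\emph{Main obstacle.} I expect the hardest step to be justifying that the BGL product coincides with the classical one and has full mass, so that no mass concentrates on $E$. I would handle this by noting that bounded $\theta$-psh functions have Bedford-Taylor products that put no mass on pluripolar sets such as $E$. The descent of $\varphi_0$ through the normal space $Y$ is the second delicate point.
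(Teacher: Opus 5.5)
Your proposal is correct and follows essentially the same route as the paper. You get $c_0=\mathrm{Vol}(\theta)$ from Bedford--Taylor integration by parts applied to the limit equation, descend bounded $\theta$-psh functions through the connected fibers of $\pi$ (as in Lemma \ref{UniquePullback}), push the equation forward to $Y$, and use EGZ uniqueness to upgrade subsequential convergence to convergence of the full family; the paper also produces $\psi_Y$ from EGZ's existence theorem, which you don't need because the descended subsequential limit already gives a solution. One small correction: in the generality of the corollary $\pi(E)$ need not be finite in higher dimensions, but your pushforward step only uses that $\pi(E)$ is a proper analytic (hence pluripolar and $\Omega_Y$-null) subset of $Y$, so the argument is unaffected.
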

We stress that in the nonclosed setting, the uniqueness of the bounded potential is not known at this moment to the authors. We only claim the full-family convergence for the K\"ahler case.

We now specialize to the case when $Y$ is a threefold with finitely many ordinary double point singularities modeled by a conifold singularity. Conifold singularities have garnered interest in their own right, for instance see the works of \cite{DonaldsonSunOne,DonaldsonSunTwo,Fr,HeinSun2017,tianconifol,Zhang2025}. Assuming that $\omega_Y$ is K\"ahler and $\eta$ is chosen to be K\"ahler near the exceptional locus $E$, we can define a distance associated to the metric $\omega_Y+i\partial\bar\partial\psi_Y$ on $Y_{\mathrm{reg}}$. We may apply Theorem \ref{MainTheorem1} to establish our second main result and an extension of the work of Song in \cite{Song1.5,Song2,Song}:
\begin{theorem}\label{MainTheorem2}
    Let $\pi\colon X\to Y$ be smooth small resolution, with $\dim_\C Y=3$ and $Y$ having only ordinary double points. Further assume that $\omega_Y$ is Kähler and that $\eta$ is Kähler on a neighborhood of $E$. If $\tilde{\omega}_\epsilon$ is as in Equation \ref{cma}, then: 
    \begin{enumerate}
        \item The metric completion $\overline{\pare{Y_{\text{reg}},d_Y}}$ is canonically homeomorphic to $Y$.
        \item If $g_\epsilon$ denote the Riemannian metrics associated with $\tilde{\omega}_\epsilon$ for $0<\epsilon\le 1$, then
        \begin{align*}
        \pare{X,g_\epsilon} &\stackrel{GH}{\longrightarrow} \overline{\pare{Y_{\text{reg}},d_Y}}.
    \end{align*}
    \end{enumerate}
\end{theorem}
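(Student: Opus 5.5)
Following Song's approach, the plan has three ingredients: the smooth convergence from Theorem \ref{MainTheorem1}, a uniform diameter bound, and the statement that small neighbourhoods of $E$ have uniformly small diameter. All three feed a standard Gromov--Hausdorff argument.

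\emph{Convergence away from $E$.} By Corollary \ref{KahlerMainTheorem1}, since $\omega_Y$ is K\"ahler, $\varphi_\varepsilon\to\pi^*\psi_Y$ along the full family. The higher-order estimates of Theorem \ref{MainTheorem1} upgrade this to $C^\infty_{loc}(X\setminus E)$ convergence. Hence $\tilde\omega_\varepsilon\to\omega_{KE}:=\pi^*(\omega_Y+\pot\psi_Y)$ smoothly on compact subsets of $X\setminus E\cong Y_{\mathrm{reg}}$. Part (1) should follow from the local structure at each node $p$. Near $p$, $\omega_Y+\pot\psi_Y$ is a bounded-potential Ricci-flat K\"ahler metric, and I would show it is uniformly equivalent to, or at least dominated by, a multiple of the Candelas--de la Ossa cone metric. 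Then radial curves give $d_Y$-length $\lesssim r^{\alpha}$ to the node. For this I would try a Schwarz lemma / $C^2$-type estimate $\omega_Y+\pot\psi_Y\le C\,\omega_{co}$ near $p$, using the cone metric's bounded curvature in the scaled sense together with the $L^\infty$ bound. Consequently each node contributes exactly one point to the metric completion, distinct nodes are separated, and the identity on $Y_{\mathrm{reg}}$ extends to a continuous bijection from the compact space $\overline{(Y_{\mathrm{reg}},d_Y)}$ onto the Hausdorff space $Y$, hence a homeomorphism.

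\emph{Uniform diameter bounds near $E$.} This is the main obstacle: it requires uniform control of $\tilde\omega_\varepsilon$ near the exceptional curves. Because $\eta$ is K\"ahler near $E$ and $\theta$ is closed, $\tilde\omega_\varepsilon$ is K\"ahler and Ricci-flat on a neighbourhood $U$ of $E$. Each component of $E$ is a $\Pone$ with normal bundle $\mathcal O(-1)^{\oplus2}$, and $U$ is a neighbourhood of the zero section of the small resolution of the conifold. The first step is to show that the area of $E$ with respect to $\tilde\omega_\varepsilon$ is $\int_E(\theta+\varepsilon\eta)=\varepsilon\int_E\eta\to0$. This uses closedness on $U$ and $\theta|_E=0$. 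The second step, following Song, is to compare $\tilde\omega_\varepsilon$ on $U$ with the explicit Candelas--de la Ossa Ricci-flat metrics $\omega_{co,\varepsilon}$ on $\OPone^{\oplus 2}$, whose $\Pone$ has size $\varepsilon$. Using the uniform $L^\infty$ bound, I would obtain a trace bound $\mathrm{tr}_{\omega_{co,\varepsilon}}\tilde\omega_\varepsilon\le C$ via a Chern--Lu / Aubin--Yau type inequality on $U$, with a cutoff or barrier handling the boundary of $U$, where the $C^k(K)$ bounds give control. This yields, for each $\delta>0$, a neighbourhood $U_\delta\supset E$ whose $g_\varepsilon$-diameter is below $\delta$ for all small $\varepsilon$. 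If a trace bound is out of reach, the fallback is a volume/segment argument: bound lengths of radial curves by $\int |\nabla|$ estimates against the small volume of $U_\delta$ in $g_\varepsilon$, then apply Cheeger--Colding-free arguments in the K\"ahler Ricci-flat region to turn small volume plus the noncollapsed interior into small diameter.

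\emph{Gromov--Hausdorff convergence.} Fix $\delta$ and set $K_\delta=X\setminus U_\delta$. On $K_\delta$, $g_\varepsilon\to g_Y$ smoothly. Distances between points of $K_\delta$ are then comparable up to $O(\delta)$: a $g_\varepsilon$-geodesic either stays in a slightly larger compact set, where lengths converge, or enters $U_\delta$, which costs at most $2\delta$ by the diameter bound. In the limit, part (1) shows $d_Y$-paths can be pushed off the nodes at cost $o(1)$. The map $X\to\overline{(Y_{\mathrm{reg}},d_Y)}$ given by $\pi$ on $K_\delta$, with $U_\delta$ sent to the nodes, is therefore a $C\delta$-GH approximation for $\varepsilon$ small. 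Letting $\delta\to0$ gives part (2).
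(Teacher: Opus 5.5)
There is a genuine gap. Both of your key steps rest on one estimate: an \emph{upper} bound of the Ricci-flat metric by a conical model. For part (1) this is $\omega_Y+\pot\psi_Y\le C\,\omega_{co}$ near a node; near $E$ it is $\operatorname{tr}_{\omega_{co,\varepsilon}}\tilde\omega_\varepsilon\le C$ uniformly in $\varepsilon$. The tools you name do not give this.

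\begin{itemize}
\item The Schwarz/Chern--Lu inequality bounds $\operatorname{tr}_{\tilde\omega_\varepsilon}(\text{reference})$, i.e.\ it gives a \emph{lower} bound on the Ricci-flat metric. This is exactly how the paper uses it, with the flat pullback $F^*\omega_{\mathbb{C}^4}$.
\item The Aubin--Yau estimate for $\operatorname{tr}_{\omega_{co}}\omega_0$ needs a lower bound on the bisectional curvature of the reference. For the cone this is only $-Cr^{-2}$, and for $\omega_{co,\varepsilon}$ it is unbounded as $\varepsilon\to0$. You would therefore need a barrier $\chi$ with $\pot\chi\gtrsim r^{-2}\omega_{co}$ near the vertex.
\item No radial function supplies such a barrier. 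For $\chi=g(\log r^2)$, the radial and transverse eigenvalues of $\pot\chi$ relative to $\omega_{co}$ are proportional to $r^{-2}g''$ and $r^{-2}g'$, and $g''\ge c$, $g'\ge c$ cannot both hold as $\log r^2\to-\infty$. The usual choices ($\log r$, or a quasi-psh function with analytic singularities) are degenerate in the radial direction.
\item Your $L^\infty$ bound is relative to $\theta+\varepsilon\eta$, not to the CO potentials, so it supplies no $C^0$ input here.
\item A uniform bound $\tilde\omega_\varepsilon\le C\omega_{co,\varepsilon}$ would give $\omega_0\le C\omega_{co}$ in the limit. So your second step is at least as hard as the first.
\end{itemize}

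This cone comparison is a deep theorem (Hein--Sun; Zhang in the generality needed). The paper does not prove it by a maximum principle. It imports it as Proposition \ref{zhang}, which gives $C^{-1}g_{\mathfrak C}\le\Phi^*g_0\le Cg_{\mathfrak C}$ and hence $\operatorname{diam}_{d_0}(V_{a,\delta}\setminus\{y_a\})\le C\delta$ (Lemma \ref{NoNeckMet}). Part (1) then follows as in Lemma \ref{CompletionHomeo}.

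Near $E$ the paper avoids curved references entirely and follows Song's scheme with flat ones:
\begin{itemize}
\item Chern--Lu against $F^*\omega_{\mathbb{C}^4}$ gives $\tilde\omega_\varepsilon\ge D^{-1}\omega_{\hat E}$, and the volume equation then gives the crude bound $\tilde\omega_\varepsilon\le De^{-\rho}\omega_{\hat E}$.
\item Restricting to $C$, and using $\int_C\tilde\omega_\varepsilon=\varepsilon\int_C\eta$ with a two-dimensional lemma, gives $\operatorname{diam}(C,g_\varepsilon|_C)\le C\varepsilon^{1/3}$.
\item Subharmonicity of $\log H$ on the flat slices $L_w$ (with barrier $\rho_1$) and a Bochner argument for the holomorphic field $V_1$ give $\abs{W}^2_{\tilde\omega_\varepsilon}\le Ce^{-\rho/2}$.
\item Together these give $\operatorname{diam}^{\mathrm{int}}(U_\delta,g_\varepsilon)\le C\delta^{1/2}+C\varepsilon^{1/3}$ (Proposition \ref{LocalDiameterBound}).
\end{itemize}
These bounds are weaker than conical ones but suffice for condition (iii) of Proposition \ref{GHConverge}. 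Because $C$ collapses to the node in the limit, they do not directly control $d_0$ on punctured neighbourhoods, which is why Zhang's theorem is still needed for (1).

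Your fallback is too vague to replace either ingredient, since small volume alone does not force small diameter. There is a uniform bound $\operatorname{Ric}(g_\varepsilon)\ge-K$ (Ricci-flat near $E$, smooth convergence elsewhere). Even with it, Bishop--Gromov at best shows that points of $U_\delta$ lie close to $\partial U_{2\delta}$. You would still need the link $\partial U_{2\delta}$ to be small, which is the same conical input. Once the two diameter statements are available, your Gromov--Hausdorff step is fine and is essentially Proposition \ref{GHConverge}.
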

Understanding these conifold transitions in the non-K\"ahler setting has received attention recently; we refer the reader to \cite{CPY,FPS,GiustiSpotti,Katz}.\bigskip

The organization of this article is as follows. Section 2 establishes the preliminary background material, primarily concerning lower and upper Monge-Ampère masses. Section 3 proves uniform and torsion-sensitive local zeroth- and second-order estimates and leverages these estimates to pass to the global non-pluripolar limit. In the K\"ahler case, we also deal with descent and uniqueness of the potentials. In Section 4, we specialize to a nodal threefold and derive similar estimates to Song. Finally, in Section 5, we identify the metric completion under an easily verified ``no-neck hypothesis'' and prove Gromov-Hausdorff convergence via an abstract comparison argument.

\bigskip
\noindent\textbf{Acknowledgments.}
The authors thank their advisor, Professor Jian Song, for introducing them to this problem and for his encouragement. This work forms part of the authors' Ph.D. theses at Rutgers University, and they are grateful to the Department of Mathematics for its generous support. AI tools were used to check routine steps in the proofs and to improve the exposition.

\section{Preliminaries}

Let $X$ be a compact complex manifold and let $Y$ be a $\Q$-Gorenstein normal projective variety, both of complex dimension $n$. By $\Q$-Gorenstein we mean that $mK_Y$ is Cartier for some integer $m>0$. We say that a resolution $\pi\colon X\to Y$ is:
\begin{itemize}
    \item A crepant resolution if $K_X=\pi^*K_Y$ as $\Q$-Cartier divisors. 
    \item A small resolution if the exceptional locus $E=\mathrm{Exc}(\pi)$ has codimension at least two.
\end{itemize} 
If $Y$ is a complex threefold, then a small resolution of an ordinary double point at a singularity $p$  replaces the point $p$ with a complex curve $C\cong \mathbb{P}^1$. Locally, the singularity is analytically isomorphic to a quadric affine cone 
\begin{align*}
    \mathfrak{C} = \set{(z_1,z_2,z_3,z_4)\in \C^4}{z_1^2+z_2^2+z_3^2+z_4^2=0}
\end{align*}
while the resolution is locally modeled by
\begin{equation}\label{NodalLocalModel}
    \hat{E} = \mathrm{Tot}(\OPone\oplus \OPone).
\end{equation}
Because $Y$ is projective, after taking a projective resolution $Z\to Y$, we can see that $X$ is bimeromorphic to a compact Kähler manifold and thus $X$ belongs to the Fujiki class $\mathcal{C}$.
\[\begin{tikzcd}
	X && Y \\
	& Z
	\arrow["\pi", from=1-1, to=1-3]
	\arrow["\sim"', dashed, from=1-1, to=2-2]
	\arrow[from=2-2, to=1-3]
\end{tikzcd}\]

The following lemma provides a hermitian metric on $X$ that is Kähler near $E$.

\begin{lem}\label{EtaKahlerNearE}
    In the nodal threefold setup, there exists a hermitian metric $\eta$ on $X$ which is \Kahler in a neighborhood of $E$.
\end{lem}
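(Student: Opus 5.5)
We build $\eta$ by gluing a Kähler form defined near $E$ to an arbitrary hermitian metric with a cutoff, arranging that positivity survives in the transition region. In the nodal threefold setup, $E=\bigsqcup_{i} C_i$ is a finite disjoint union of smooth rational curves $C_i\cong\Pone$, each with a neighborhood $U_i$ biholomorphic to a neighborhood of the zero section in $\hat{E}=\mathrm{Tot}(\OPone\oplus\OPone)$, as in \eqref{NodalLocalModel}. It therefore suffices to construct a Kähler form on a neighborhood of each $C_i$ and glue.

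\textbf{Step 1 (local Kähler form).} On $\hat E$, take the Kähler form
\[
\omega_{\mathrm{loc}}=\pi_{\Pone}^*\omega_{FS}+\pot\, h,
\]
where $h$ is the squared norm of the fibre coordinates, measured with the hermitian metric on $\OPone\oplus\OPone$ induced by the Fubini--Study metric. Since $\OPone$ has negative curvature, $\pot h$ is only semipositive transversally to the zero section, and adding $\pi_{\Pone}^*\omega_{FS}$ makes the sum strictly positive on a tubular neighborhood. Equivalently, one may pull back a Kähler form on a neighborhood of the node in $\mathfrak C\subset\C^4$, such as $\pot|z|^2$, and add $\pi_{\Pone}^*\omega_{FS}$ using the projection $\hat E\to\Pone$. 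Either way we obtain a Kähler form $\omega_i$ on a neighborhood $V_i\Subset U_i$ of $C_i$.

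\textbf{Step 2 (gluing).} Pick any hermitian metric $g$ on $X$ and cutoffs $\chi_i\in C_c^\infty(V_i)$ with $0\le\chi_i\le 1$ and $\chi_i\equiv 1$ on smaller neighborhoods $W_i$ of $C_i$. Set
\[
\eta=\sum_i \chi_i\,\omega_i+\Big(1-\sum_i\chi_i\Big)g.
\]
We glue forms rather than potentials, so no derivatives of $\chi_i$ appear. At each point $\eta$ is a convex combination of positive definite $(1,1)$-forms, hence positive. On $W_i$ it equals $\omega_i$, which is closed. So $\eta$ is hermitian on $X$ and Kähler on $\bigcup_i W_i$, a neighborhood of $E$.

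\textbf{Main obstacle.} The only substantive point is Step 1: producing a genuinely closed positive form near each $C_i$. The difficulty is that the potential $h$ alone degenerates along the zero section, and it is not globally defined on $X$. This is why we glue forms via a partition of unity, which only requires positivity, rather than trying to correct a global Kähler potential. In the end the result needs only the local model \eqref{NodalLocalModel}. The global structure of $X$, which may be non-Kähler, plays no role.
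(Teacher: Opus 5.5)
Your proof is correct and follows essentially the same route as the paper: take a Kähler form on a neighborhood of each exceptional curve from the local model $\hat E$, then glue it to an arbitrary hermitian metric by a pointwise convex combination with cutoffs, so that $\eta$ coincides with the local Kähler form near $E$. Your Step 1 only adds detail, since your local form is the paper's $\hat\omega=\omega_{FS}+\pot e^{\rho}$ from Section 4. Two small corrections: choose the $V_i$ pairwise disjoint so that $\sum_i\chi_i\le 1$; and $\pot h$ degenerates along the zero section in the direction \emph{tangent} to it, not transversally, which is exactly the degeneracy that adding $\pi_{\Pone}^*\omega_{FS}$ removes.
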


\begin{proof}
    Choose pairwise disjoint neighborhoods $U_j$ of the exceptional curves. By the local model (Equation \ref{NodalLocalModel}), each $U_j$ carries a \Kahler form $\omega_j$. Let $\chi_j$ be a smooth cutoff function supported in $U_j$ with $0\leq \chi_j\leq 1$ and $\chi_j\equiv 1$ on a small neighborhood $U_j^\prime\Subset U_j$. If $h$ is any global hermitian metric on $X$, we can define
    \begin{align*}
        \eta &= \begin{cases}
            \pare{1-\chi_j}h + \chi_j\omega_j, & \text{if }z\in U_j, \\
            h, & \text{otherwise}.
        \end{cases}
    \end{align*}
    This metric is a pointwise convex sum of smooth hermitian metrics and thus remains smooth and hermitian on $X$. Moreover, $\eta=\omega_j$ on $U'_j$ and hence $\eta$ is \Kahler near $E$ as claimed.
\end{proof}

We wish to solve a hermitian complex Monge-Amp\`ere equation. However, we require some notions not present in the \Kahler setting. We define the lower Monge-Amp\`ere mass and upper Monge-Amp\`ere mass of a smooth semi-positive $(1,1)$-form $\alpha$, respectively, to be 
\begin{align*}
    \operatorname{MA}_-\pare{\alpha} &= \inf_{u\in \operatorname{PSH}\pare{X,\alpha}\cap L^\infty}\int_X\pare{\alpha+\pot u}^n \\
    \operatorname{MA}_+\pare{\alpha} &= \sup_{u\in \operatorname{PSH}\pare{X,\alpha}\cap L^\infty}\int_X\pare{\alpha+\pot u}^n.
\end{align*}
Note that if the form $\alpha$ is not closed, neither quantity needs to equal $\int_X\alpha^n$. Additionally, the condition $\int_X \alpha^n>0$ alone does not imply $\operatorname{MA}_-\pare{\alpha}>0$.

For the reader's convenience, we collect some results needed for our arguments. To begin, we will make use of the following results from Guedj-Lu \cite{GL22}:
\begin{prop}\label{MAMonotone}\leavevmode
    \begin{enumerate}
        \item (Proposition 3.2) Let $0\leq \alpha_1\le\alpha_2$ be semi-positive $(1,1)$-forms. Then 
            \begin{align*}
                \operatorname{MA}_\pm\pare{\alpha_1} \le \operatorname{MA}_\pm\pare{\alpha_2}.
            \end{align*}
        \item  (Corollary 3.8) If $X$ belongs to the Fujiki class $\mathcal{C}$, then every hermitian metric on $X$ has finite upper Monge-Ampère mass.
    \end{enumerate}
\end{prop}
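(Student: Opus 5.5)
For part (1) I would start from the inclusion $\operatorname{PSH}(X,\alpha_1)\subset \operatorname{PSH}(X,\alpha_2)$, which is immediate from $\alpha_1\le\alpha_2$.

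\emph{Upper masses.} Take $u\in\operatorname{PSH}(X,\alpha_1)\cap L^\infty$. Then
\[
\alpha_2+\pot u=(\alpha_1+\pot u)+(\alpha_2-\alpha_1).
\]
Expand $(\alpha_2+\pot u)^n$ multilinearly in the sense of Bedford--Taylor. Every mixed term is a wedge of positive closed currents with the smooth semipositive form $\alpha_2-\alpha_1$, so it is a positive measure. This gives $(\alpha_1+\pot u)^n\le(\alpha_2+\pot u)^n$ as measures. Integrating and taking the supremum over $u$ yields $\MAup(\alpha_1)\le\MAup(\alpha_2)$.

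\emph{Lower masses.} This direction needs more work, because a $v\in\operatorname{PSH}(X,\alpha_2)\cap L^\infty$ need not be $\alpha_1$-psh. For such a $v$ I would use the envelope
\[
u=P_{\alpha_1}(v)=\sup\{w\in\operatorname{PSH}(X,\alpha_1): w\le v\}.
\]
It is bounded below by $\inf_X v$, since constants are $\alpha_1$-psh. It is also bounded above by $v$. So $u\in \operatorname{PSH}(X,\alpha_1)\cap L^\infty$. The goal is the inequality
\[
(\alpha_1+\pot u)^n\le \mathbf 1_{\{u=v\}}(\alpha_2+\pot v)^n .
\]
Integrating it gives $\int_X(\alpha_1+\pot u)^n\le\int_X(\alpha_2+\pot v)^n$, and the infimum statement follows.

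The plan for this inequality has three steps.
\begin{itemize}
\item First treat a smooth obstacle $h$ in place of $v$. The envelope is then $C^{1,1}$, by the Berman-type regularity available on hermitian manifolds. Its Monge--Amp\`ere measure is $\mathbf 1_{\{P(h)=h\}}(\alpha_1+\pot h)^n$, and $\alpha_1+\pot h\ge 0$ on the contact set. There $(\alpha_1+\pot h)^n\le(\alpha_2+\pot h)^n$ pointwise.
\item For general bounded $v$, regularize by Demailly's method, obtaining smooth $v_j\searrow v$ that are $(\alpha_2+\delta_j\omega)$-psh with $\delta_j\to 0$.
\item Apply the smooth case to $v_j$, with $\alpha_2$ replaced by $\alpha_2+\delta_j\omega$. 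Then pass to the limit: $P(v_j)\searrow P(v)$, and Bedford--Taylor continuity along decreasing sequences gives convergence of the Monge--Amp\`ere measures. The total masses $\int(\alpha_2+\delta_j\omega+\pot v_j)^n$ converge to $\int(\alpha_2+\pot v)^n$ by the same continuity.
\end{itemize}
I expect this limiting and regularization step to be the main obstacle, since $\alpha_i$ are not closed and the mass of $(\alpha_2+\pot v)^n$ is not cohomological. If convergence of the masses is delicate, I would fall back to proving only the integrated inequality $\int(\alpha_1+\pot P(v_j))^n\le\int(\alpha_2+\delta_j\omega+\pot v_j)^n$ at each stage. I would then use lower semicontinuity of the mass on the left and continuity on the right.

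For part (2), since $X$ is in Fujiki class $\mathcal C$, I would choose a modification $\mu\colon\tilde X\to X$ with $\tilde X$ compact K\"ahler, and a K\"ahler form $\tilde\omega$ on $\tilde X$. Let $\omega$ be any hermitian metric on $X$.
\begin{itemize}
\item The form $\mu^*\omega$ is smooth and semipositive on a compact manifold, so $\mu^*\omega\le C\tilde\omega$ for some $C>0$.
\item For $u\in\operatorname{PSH}(X,\omega)\cap L^\infty$, the function $\mu^*u$ is bounded and $\mu^*\omega$-psh.
\item Bounded potentials put no mass on analytic sets, and $\mu$ is biholomorphic off such a set. Hence
\[
\int_X(\omega+\pot u)^n=\int_{\tilde X}(\mu^*\omega+\pot\mu^*u)^n .
\]
\item By part (1) on $\tilde X$, the right-hand side is at most $\MAup(C\tilde\omega)$.
\item Since $C\tilde\omega$ is closed, integration by parts for bounded potentials gives $\int_{\tilde X}(C\tilde\omega+\pot w)^n=C^n\int_{\tilde X}\tilde\omega^n$ for every bounded $C\tilde\omega$-psh $w$.
\end{itemize}
Hence $\MAup(\omega)\le C^n\int_{\tilde X}\tilde\omega^n<\infty$.
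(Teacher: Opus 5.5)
The paper gives no proof of this statement; it quotes Guedj--Lu \cite{GL22}. Your part (2) and the $\MAup$ half of part (1) are the standard arguments and are correct. One slip: $\alpha_1+\pot u$ is positive but not closed, because $\alpha_1$ is not. The mixed terms $(\alpha_1+\pot u)^k\wedge(\alpha_2-\alpha_1)^{n-k}$ are still positive measures, by local regularization of $u$ and Bedford--Taylor continuity, so the conclusion stands.

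For $\MAlow$, passing to the envelope $P_{\alpha_1}(v)$ is the right idea, but your first bullet does not hold as stated. The $C^{1,1}$ regularity of envelopes with smooth obstacle (Berman, Tosatti, Chu--Tosatti--Weinkove, Chu--Zhou) requires a strictly positive reference form, K\"ahler or hermitian. Your $\alpha_1$ is only semipositive. Moreover, the paper uses part (1) precisely for degenerate $\alpha_1$: it needs $\MAlow(\theta)\le\MAlow(\theta_\epsilon)$ with $\theta=\pi^*\omega_Y$ in Proposition \ref{Boundedcepsilon}, and $\MAlow(a\pi^*\kappa_Y)\le\MAlow(\theta)$ in Lemma \ref{MAposandrhoexists}. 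The repair is cheap. Run the smooth-obstacle step for the hermitian form $\alpha_1+\epsilon\omega$; for smooth $(\alpha_2+\delta\omega)$-psh $h$ it gives
\[
\int_X\bigl(\alpha_1+\epsilon\omega+\pot P_{\alpha_1+\epsilon\omega}(h)\bigr)^n\le\int_X\bigl(\alpha_2+(\delta+\epsilon)\omega+\pot h\bigr)^n .
\]
Now let $\epsilon\searrow 0$. The envelopes $P_{\alpha_1+\epsilon\omega}(h)$ decrease to an $\alpha_1$-psh function below $h$, hence to $P_{\alpha_1}(h)$, and Bedford--Taylor continuity handles the left side. Separately, the measure-level limit in your third bullet is not justified, because indicators of contact sets do not pass to weak limits. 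Your integrated fallback is all you need, and it works, since $P_{\alpha_1}(v_j)\searrow P_{\alpha_1}(v)$ with uniform lower bound $\inf_X v$.

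So patched, your proof is complete, but it is heavier than necessary. The direct route, along the lines of Guedj--Lu, needs neither regularity of envelopes nor Demailly regularization. Set $u=P_{\alpha_1}(v)$. Then $u\le v$, and both functions are $\alpha_2$-psh, so the maximum principle in $\operatorname{PSH}(X,\alpha_2)$ gives
\[
\mathbf 1_{\{u=v\}}(\alpha_2+\pot u)^n\le\mathbf 1_{\{u=v\}}(\alpha_2+\pot v)^n .
\]
Combine this with $(\alpha_1+\pot u)^n\le(\alpha_2+\pot u)^n$. Also use balayage: $(\alpha_1+\pot u)^n$ is carried by $\{u=v\}$, which holds for the quasi-continuous obstacle $v$. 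Your target inequality $(\alpha_1+\pot u)^n\le\mathbf 1_{\{u=v\}}(\alpha_2+\pot v)^n$ then follows at once. Your concern that $v$ need not be $\alpha_1$-psh disappears once you compare in the larger class.
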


We also have the following result of Tosatti-Weinkove \cite{TW-JAMS}, which we rephrase for our setting.

\begin{prop}\label{MAPair}
    For every $0<\epsilon\le 1$, there is a unique constant $c_\epsilon>0$ and a unique smooth function $\varphi_\epsilon$ which satisfy Equation \ref{cma}. Furthermore, $\tilde{\omega}_\epsilon$ is Chern-Ricci-flat.
\end{prop}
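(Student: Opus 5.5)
The plan is to reduce the statement to the Tosatti--Weinkove theorem on the Hermitian complex Monge--Amp\`ere equation with prescribed volume form. Fix $0<\epsilon\le 1$ and set $\omega_\epsilon:=\theta+\epsilon\eta$. Since $\theta=\pi^*\omega_Y$ is semipositive and $\eta$ is a hermitian metric, $\omega_\epsilon\ge\epsilon\eta>0$, so $\omega_\epsilon$ is a genuine smooth hermitian metric on the compact manifold $X$. Because $\Omega$ is a smooth positive volume form, we may write $\Omega=e^{F_\epsilon}\omega_\epsilon^n$ with $F_\epsilon\in C^\infty(X)$. The main theorem of \cite{TW-JAMS} then provides a unique smooth function $\tilde\varphi_\epsilon$ and a unique constant $b_\epsilon$ with
\[
\omega_\epsilon+\pot\tilde\varphi_\epsilon>0,\qquad (\omega_\epsilon+\pot\tilde\varphi_\epsilon)^n=e^{F_\epsilon+b_\epsilon}\omega_\epsilon^n,\qquad \sup_X\tilde\varphi_\epsilon=0 .
\]
We set $\varphi_\epsilon=\tilde\varphi_\epsilon$ and $c_\epsilon=e^{b_\epsilon}>0$. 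This gives existence in Equation \ref{cma}, and the constant is positive automatically.

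For uniqueness, we also follow \cite{TW-JAMS}. Suppose $(\varphi,c)$ and $(\varphi',c')$ both solve the equation. Compare them at a maximum and at a minimum of $\varphi-\varphi'$.
\begin{itemize}
\item At a maximum point, $\pot(\varphi-\varphi')\le0$. Hence $\omega_\epsilon+\pot\varphi\le\omega_\epsilon+\pot\varphi'$ there, so $c\le c'$.
\item At a minimum point the inequalities reverse, giving $c\ge c'$.
\end{itemize}
Therefore $c=c'$. The difference $u=\varphi-\varphi'$ then satisfies a linear elliptic equation $Lu=0$ with no zeroth-order term. This follows by writing $\det(A+\pot u)/\det A=1$ and integrating the derivative along the segment. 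By the strong maximum principle $u$ is constant, and the normalization $\sup\varphi=\sup\varphi'=0$ forces $u=0$.

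For the Chern--Ricci-flat claim, recall that the Chern--Ricci form of a hermitian metric $\omega$ is $-\pot\log\omega^n$, where the logarithm is taken relative to any local holomorphic trivialization. Since $\tilde\omega_\epsilon^n=c_\epsilon\Omega$, we get
\[
\mathrm{Ric}(\tilde\omega_\epsilon)=-\pot\log c_\epsilon-\pot\log\Omega=0,
\]
using the normalization $\pot\log\Omega=0$ coming from crepancy.

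The only substantive step is the existence and a priori estimates of \cite{TW-JAMS}, and we quote these rather than reprove them. That result needs no K\"ahler or Gauduchon hypothesis on $\omega_\epsilon$, which is exactly why it applies even though $\eta$ need not be closed. The remaining points are routine: positivity of $\omega_\epsilon$, smoothness of $F_\epsilon$, and the maximum principle argument.
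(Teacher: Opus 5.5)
Your proposal is correct and matches the paper's proof. You apply Tosatti--Weinkove to the hermitian metric $\theta+\epsilon\eta\ge\epsilon\eta$ with $F_\epsilon=\log(\Omega/(\theta+\epsilon\eta)^n)$, set $c_\epsilon=e^{b_\epsilon}$, and read off Chern--Ricci-flatness from $\pot\log\Omega=0$; the only difference is that you write out the maximum-principle uniqueness argument, which the paper simply quotes from \cite{TW-JAMS}.
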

\begin{proof}
    The form $\theta_\epsilon\ge \epsilon \eta$ is a hermitian metric, and taking $F_\epsilon=\log \frac{\Omega}{\theta_\epsilon^n}$, Corollary 1 in \cite{TW-JAMS} gives a unique $b_\epsilon$ and $\varphi_\epsilon$ such that $\sup_X \varphi_\epsilon=0$ and 
    \begin{align*}
        \pare{\theta_\epsilon+\pot \varphi_\epsilon}^n &= e^{F_\epsilon+b_\epsilon}\theta_\epsilon^n.
    \end{align*}
    Taking $c_\epsilon=e^{b_\epsilon}$, we recover Equation \ref{cma}. We also have 
    \begin{align*}
        \operatorname{Ric}^{\operatorname{Ch}}\pare{\tilde{\omega}_\epsilon} &= -\pot\log c_\epsilon \Omega = -\pot\log \Omega = 0.\qedhere
    \end{align*}
\end{proof}

We now get a crude bound for the constants $c_\epsilon$.

\begin{prop}\label{Boundedcepsilon}
    There exists $C>0$ such that 
    \[\MAlow\pare{\theta}\le c_\epsilon \le C\]
    for all $0<\epsilon\le 1$. If $\theta$ is closed, we can identify $\MAlow\pare{\theta}$ with $\operatorname{vol}\pare{\theta}$.
\end{prop}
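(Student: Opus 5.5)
Integrating Equation \ref{cma} over $X$ and using $\int_X\Omega=1$ gives the identity
\begin{align*}
c_\epsilon = \int_X \pare{\theta+\epsilon\eta+\pot\varphi_\epsilon}^n .
\end{align*}
Write $\theta_\epsilon=\theta+\epsilon\eta$. Since $\varphi_\epsilon$ is smooth and $\theta_\epsilon+\pot\varphi_\epsilon>0$, we have $\varphi_\epsilon\in\operatorname{PSH}(X,\theta_\epsilon)\cap L^\infty$. Hence $c_\epsilon$ lies between $\MAlow(\theta_\epsilon)$ and $\MAup(\theta_\epsilon)$ directly from the definitions, and the whole argument is to compare these masses with fixed quantities via the monotonicity in Proposition \ref{MAMonotone}.

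For the lower bound, $\theta\le\theta_\epsilon$, so Proposition \ref{MAMonotone}(1) gives
\begin{align*}
\MAlow(\theta)\le\MAlow(\theta_\epsilon)\le c_\epsilon .
\end{align*}
For the upper bound, $\theta=\pi^*\omega_Y$ is a smooth form on the compact manifold $X$ and $\eta$ is a hermitian metric, so there is $A>0$ with $\theta\le A\eta$. Then $\theta_\epsilon\le(A+1)\eta$ for $0<\epsilon\le1$, and Proposition \ref{MAMonotone}(1) gives $c_\epsilon\le\MAup(\theta_\epsilon)\le\MAup((A+1)\eta)$. Since $X$ is in Fujiki class $\mathcal{C}$ and $(A+1)\eta$ is a hermitian metric, Proposition \ref{MAMonotone}(2) shows this quantity is a finite constant $C$ independent of $\epsilon$. (Equivalently, $\MAup((A+1)\eta)=(A+1)^n\MAup(\eta)$ by rescaling potentials.)

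For the closed case, suppose $d\theta=0$. Then for any bounded $u\in\operatorname{PSH}(X,\theta)$, Bedford–Taylor theory together with integration by parts (Stokes) on the compact manifold gives $\int_X(\theta+\pot u)^n=\int_X\theta^n$. So $\MAlow(\theta)=\MAup(\theta)=\int_X\theta^n$. Because $\theta=\pi^*\omega_Y$ is semipositive and smooth, its class is nef and big with bounded (indeed zero) potential, and $\operatorname{vol}(\theta)=\int_X\theta^n$. This gives the identification.

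The only delicate point is justifying Stokes for bounded, nonsmooth potentials. One approach is to approximate $u$ by a decreasing sequence of smooth $(\theta+\delta\eta)$-psh functions, using Demailly regularization (available since $\theta$ is semipositive and $\eta$ is hermitian). One then passes to the limit by continuity of Monge–Ampère measures along decreasing sequences, and afterwards lets $\delta\to0$, which requires controlling the torsion terms of $\eta$. A cleaner alternative is to cite the standard fact that the Monge–Ampère mass of bounded potentials in a closed class equals the cohomological volume.
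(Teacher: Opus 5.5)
Your proposal is correct and follows the paper's argument. Since $\varphi_\epsilon$ is an admissible potential, $c_\epsilon$ is sandwiched between $\MAlow(\theta)\le\MAlow(\theta_\epsilon)$ and $\MAup(\theta_\epsilon)$, which is bounded by the upper mass of a fixed hermitian metric (you use $(A+1)\eta$ where the paper uses $\theta_1=\theta+\eta$, an inessential difference), and the closed case is Bedford--Taylor integration by parts; the regularization detour you sketch is unnecessary, since for closed $\theta$ and bounded $u$ one has $(\theta+\pot u)^n-\theta^n=\pot\pare{uS}$ with $S=\sum_{k}(\theta+\pot u)^k\wedge\theta^{n-1-k}$ a closed current, so Stokes applies directly.
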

\begin{proof}
    By Proposition \ref{MAMonotone}.1, since $0\le \theta\le \theta_\epsilon \le \theta_1$, monotonicity implies
    \begin{align*}
        \MAlow\pare{\theta} \le  \MAlow\pare{\theta_\epsilon} \le  \MAup\pare{\theta_\epsilon} \le  \MAup\pare{\theta_1}<\infty.
    \end{align*}
    Further, by Equation \ref{cma},
    \begin{align*}
        c_\epsilon &= \int_X\pare{\theta_\epsilon+\pot\varphi_\epsilon}^n
    \end{align*}
    and because $\varphi_\epsilon$ is admissible for the lower and upper Monge-Ampère mass,
    \begin{align*}
        \MAlow\pare{\theta_\epsilon} &\le c_\epsilon \le \MAup\pare{\theta_\epsilon}\le \MAup\pare{\theta_1} = C.
    \end{align*}
    In the case that $\theta$ is closed, Bedford-Taylor integration by parts yields 
    \begin{equation*}
        \MAlow\pare{\theta} = \MAup\pare{\theta} = \int_X \theta^n = \operatorname{vol}\pare{\theta}.\qedhere
    \end{equation*}
\end{proof}

Finally, we require the following reformulation of Theorem 1.1 and Theorem 2.2 of Collins and Tosatti \cite{CT}:

\begin{prop}\label{NullalphaisE}
    Let $X$ be a compact complex manifold in the Fujiki class $\mathcal{C}$ and let $\alpha$ be a closed smooth real $(1,1)$-form whose class $[\alpha]\in H^{1,1}(X,\mathbb{R})$ is nef with $\int_X \alpha^n >0$. Then 
    \[
    E_{nK}(\alpha)=\mathrm{Null}(\alpha).
    \]
    Moreover, there exists a \Kahler current $T$ in the class $[\alpha]$, with analytic singularities such that 
    \[
    E_+(T)=E_{nK}(\alpha).
    \]
    In particular, $E_{nK}(\alpha)$ is an analytic subvariety of $X$.
\end{prop}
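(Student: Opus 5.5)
Proposition \ref{NullalphaisE} is stated as a reformulation of Collins--Tosatti \cite{CT}, whose theorems are proved for compact \Kahler manifolds. The plan is therefore to reduce to the \Kahler case by passing to a \Kahler modification of $X$, and then push the conclusions back down to $X$.

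\textbf{Step 1: pass to a \Kahler model.} Since $X$ is in Fujiki class $\mathcal{C}$, choose a modification $\mu\colon \tilde X\to X$ with $\tilde X$ compact \Kahler, obtained by blowing up smooth centers. Set $\tilde\alpha=\mu^*\alpha$. Then $\tilde\alpha$ is closed, its class is nef (pullbacks of nef classes are nef), and $\int_{\tilde X}\tilde\alpha^n=\int_X\alpha^n>0$.

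\textbf{Step 2: apply Collins--Tosatti upstairs.} Theorem 1.1 of \cite{CT} gives $E_{nK}(\tilde\alpha)=\mathrm{Null}(\tilde\alpha)$. Theorem 2.2 of \cite{CT} (Boucksom's construction combined with the null-locus identification) gives a \Kahler current $\tilde T\in[\tilde\alpha]$ with analytic singularities and $E_+(\tilde T)=E_{nK}(\tilde\alpha)$.

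\textbf{Step 3: compare loci under $\mu$.} I expect to prove
\[
\mathrm{Null}(\tilde\alpha)=\mu^{-1}\mathrm{Null}(\alpha)\cup \mathrm{Exc}(\mu), \qquad E_{nK}(\tilde\alpha)=\mu^{-1}E_{nK}(\alpha)\cup\mathrm{Exc}(\mu).
\]
For the null loci, use the projection formula: a subvariety $V\subset\tilde X$ not contained in $\mathrm{Exc}(\mu)$ satisfies $\int_V\tilde\alpha^{\dim V}=\int_{\mu(V)}\alpha^{\dim V}$. Every component of $\mathrm{Exc}(\mu)$ lies in the null locus, since it has positive-dimensional fibers on which $\tilde\alpha$ vanishes. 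For the non-\Kahler loci, argue in both directions:
\begin{itemize}
\item A \Kahler current on $X$ with analytic singularities pulls back to one whose singular set is $\mu^{-1}$ of the original together with $\mathrm{Exc}(\mu)$, after subtracting a small multiple of the exceptional divisor to restore strict positivity.
\item Conversely, $\mu_*$ of a \Kahler current on $\tilde X$ is a \Kahler current on $X$, and its singularities lie in $\mu(E_+(\tilde T))$.
\end{itemize}
Pushing forward with $\mu$ and using that $\mu$ is an isomorphism off $\mathrm{Exc}(\mu)$ should then give $E_{nK}(\alpha)=\mathrm{Null}(\alpha)$ away from $\mu(\mathrm{Exc}(\mu))$.

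\textbf{Step 4 (main obstacle): points of $\mu(\mathrm{Exc}(\mu))$.} This image is a small set, the union of the blown-up centers. To keep control there, I would first try an alternative: work intrinsically on $X$, using Demailly's mass concentration as adapted in \cite{CT} to class $\mathcal{C}$ (their results are in fact proved for Fujiki class $\mathcal{C}$ via this reduction). The key ingredient is to vary $\mu$. Different choices of modification have exceptional images whose intersection can be made disjoint from any given point not in $\mathrm{Null}(\alpha)$. This is the delicate step.

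\textbf{Step 5: the current on $X$.} Take $T=\mu_*\tilde T$ and check that it has analytic singularities, using that $\mu$ is proper and holomorphic. Analyticity of $E_{nK}(\alpha)$ then follows immediately, since $E_{nK}(\alpha)=E_+(T)$ and $E_+(T)$ is the singular set of a current with analytic singularities.
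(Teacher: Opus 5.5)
Steps 1--3 are sound, but they only give $E_{nK}(\alpha)\setminus\mu(\mathrm{Exc}(\mu))=\mathrm{Null}(\alpha)\setminus\mu(\mathrm{Exc}(\mu))$. The inclusion $\mathrm{Null}(\alpha)\subseteq E_{nK}(\alpha)$ holds on any compact complex manifold by the standard argument. So the whole content of the statement beyond the K\"ahler case is the inclusion $E_{nK}(\alpha)\subseteq\mathrm{Null}(\alpha)$ at points of $\mu(\mathrm{Exc}(\mu))$.

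Step 4 asserts this without proof, and the reduction cannot supply it. Every K\"ahler current in $\mu^*[\alpha]$ has positive Lelong numbers along all of $\mathrm{Exc}(\mu)$, because $\mathrm{Exc}(\mu)\subseteq\mathrm{Null}(\mu^*\alpha)\subseteq E_{nK}(\mu^*\alpha)$. Hence pushing forward K\"ahler currents from $\tilde X$ never yields a current on $X$ that is smooth near a point of $\mu(\mathrm{Exc}(\mu))$.

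Your remedy, varying $\mu$, needs that for each $x\notin\mathrm{Null}(\alpha)$ some K\"ahler modification is an isomorphism near $x$. That is equivalent to the existence of a K\"ahler current on $X$ with analytic singularities that is smooth near $x$. In one direction, push forward the K\"ahler form. In the other, resolve the current by blow-ups centered in its singular set and perturb the smooth part of the pullback by a small multiple of an exceptional divisor. Producing such a current at points off $\mathrm{Null}(\alpha)$ is the same kind of statement you are trying to prove; it is a consequence of $x\notin E_{nK}(\alpha)$. As written, Step 4 is a restatement of the goal rather than a step toward it.

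Your fallback (``mass concentration as adapted to class $\mathcal{C}$'') amounts to quoting the class $\mathcal{C}$ theorem itself. That is all the paper does: it gives no argument beyond invoking Collins--Tosatti's Theorem 1.1, already formulated for class $\mathcal{C}$, together with Boucksom's theorem for the second assertion.

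Step 5 is also incorrect. Since $E_+(\tilde T)=E_{nK}(\tilde\alpha)\supseteq\mathrm{Exc}(\mu)$, the potential of $\tilde T=\mu^*\alpha+\sqrt{-1}\partial\bar\partial(\varphi\circ\mu)$ is $-\infty$ on $\mathrm{Exc}(\mu)$. So $\mu_*\tilde T=\alpha+\sqrt{-1}\partial\bar\partial\varphi$ has potential $-\infty$, with positive Lelong numbers, on all of $\mu(\mathrm{Exc}(\mu))$. The claimed equality $E_+(\mu_*\tilde T)=E_{nK}(\alpha)$ therefore fails whenever $\mu(\mathrm{Exc}(\mu))\not\subseteq E_{nK}(\alpha)$. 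Moreover, ``$\mu$ is proper and holomorphic'' does not show that analytic singularities survive pushforward.

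None of this detour is needed. The ``moreover'' part is Boucksom's theorem applied directly on $X$, which holds for any big class on any compact complex manifold. The class $[\alpha]$ is big because $\mu_*$ of a K\"ahler current in $\mu^*[\alpha]$ (Demailly--P\u{a}un on $\tilde X$) is a K\"ahler current in $[\alpha]$, as your Step 3 already notes. Analyticity of $E_{nK}(\alpha)$ then follows from Boucksom's theorem alone, independently of the identification with the null locus.
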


\begin{lem}\label{MAposandrhoexists}
Let $Y$ be a compact projective normal variety, let $\omega_Y$ be a smooth hermitian form on $Y$, and let $\pi\colon X\to Y$ be a resolution. Set $\theta=\pi^*\omega_Y$. Then
\begin{align*}
    \MAlow\pare{\theta} >0.
\end{align*}
More precisely, if $\kappa_Y$ is any projective Kähler form and $a>0$ satisfies $\omega_Y\ge a\kappa_Y$, then
\begin{align*}
    \MAlow\pare{\theta} \ge a^n\int_Y\kappa_Y^n>0.
\end{align*}
Moreover, for every hermitian metric $\eta$ on $X$, there exists $\delta>0$ and $\rho\in\operatorname{PSH}\pare{X,\theta}$ with analytic singularities precisely along  $\operatorname{Exc}\pare{\pi}$ such that 
\begin{equation}\label{thetaboundedbelow}
    \theta+\pot\rho \ge \delta\eta.
\end{equation}
\end{lem}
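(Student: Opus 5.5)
The plan has two parts: a monotonicity argument for the lower bound on $\MAlow\pare{\theta}$, and a Kähler-current argument for $\rho$ via Proposition \ref{NullalphaisE}.

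\emph{Lower bound.} Since $Y$ is projective, fix a projective Kähler form $\kappa_Y$ (for instance the restriction of a Fubini--Study form under an embedding $Y\hookrightarrow\mathbb{P}^N$). By compactness of $Y$ and smoothness of $\omega_Y$, there is $a>0$ with $\omega_Y\ge a\kappa_Y$. Pulling back gives $\theta\ge a\pi^*\kappa_Y\ge 0$, and Proposition \ref{MAMonotone}.1 yields $\MAlow\pare{\theta}\ge \MAlow\pare{a\pi^*\kappa_Y}$. The form $\alpha:=a\pi^*\kappa_Y$ is closed and semipositive. So for any bounded $u\in\operatorname{PSH}\pare{X,\alpha}$, Bedford--Taylor integration by parts on the compact manifold $X$ gives
\[
\int_X(\alpha+\pot u)^n=\int_X\alpha^n=a^n\int_X\pi^*\kappa_Y^n=a^n\int_Y\kappa_Y^n,
\]
where the last equality uses that $\pi$ is biholomorphic off the measure-zero set $\operatorname{Exc}(\pi)$. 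Hence $\MAlow\pare{\alpha}=a^n\int_Y\kappa_Y^n>0$, and the first two claims follow.

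\emph{Construction of $\rho$.} Apply Proposition \ref{NullalphaisE} to $\alpha=\pi^*\kappa_Y$. This class is nef, being the pullback of a Kähler class, and it is big since $\int_X\alpha^n>0$. Moreover $X$ is in the Fujiki class $\mathcal{C}$, as noted above. We identify $\mathrm{Null}(\alpha)=\operatorname{Exc}(\pi)$. On one hand, any subvariety $V\subset\operatorname{Exc}(\pi)$ of positive dimension has a positive-dimensional fiber through a generic point, so $\pi|_V$ drops dimension and $\int_V\alpha^{\dim V}=\int_{\pi(V)}\kappa_Y^{\dim V}=0$. On the other hand, if $V\not\subset\operatorname{Exc}(\pi)$, then $\pi|_V$ is generically finite, so $\int_V\alpha^{\dim V}>0$. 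Proposition \ref{NullalphaisE} then produces a Kähler current $T=\alpha+\pot\psi\ge\delta_0\eta$ with analytic singularities and $E_+(T)=\operatorname{Exc}(\pi)$. Since $T$ is a current, I will set $\rho=\psi/a$ and use $\theta\ge a\pi^*\kappa_Y$ to get
\[
\theta+\pot\rho\ge a\pi^*\kappa_Y+\pot\rho=\tfrac{a}{1}\Big(\alpha+\pot\psi\Big)\cdot\tfrac1a\cdot a\ge \delta\eta,
\]
after rescaling. More cleanly, take $\rho=a\psi$, so that $\theta+\pot\rho\ge a(\alpha+\pot\psi)\ge a\delta_0\eta$, and put $\delta=a\delta_0$. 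Then $\rho\in\operatorname{PSH}(X,\theta)$ has analytic singularities. Its polar set lies in $E_+(T)=\operatorname{Exc}(\pi)$, and in the standard construction it is precisely that set. If needed, I will replace $\psi$ by $\max(\psi,\,\phi_E-C)$-type gluing, or add a small multiple of the log of the ideal sheaf of $E$, to ensure the singularities are exactly along $\operatorname{Exc}(\pi)$.

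\emph{Main obstacle.} The step I expect to be hardest is the identification of $\mathrm{Null}(\pi^*\kappa_Y)$ with $\operatorname{Exc}(\pi)$, together with the claim that $\rho$ is singular precisely there rather than merely at most there. For the former I will rely on the fiber-dimension argument above. For the latter, the fallback is to construct $\rho$ directly: take a $\pi$-ample-type correction, $\log$ of the sum of squares of generators of the ideal of $E$, weighted by a smooth hermitian metric, and add a small multiple of it to $\psi$.
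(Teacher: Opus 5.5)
Your route is the paper's. For the lower bound you use monotonicity together with Bedford--Taylor to get $\MAlow(\theta)\ge\MAlow(a\pi^*\kappa_Y)=a^n\int_Y\kappa_Y^n$. For $\rho$ you apply Proposition \ref{NullalphaisE} to the pullback of $\kappa_Y$ and transfer the Kähler current using $\theta\ge a\pi^*\kappa_Y$. Your choice $\alpha=\pi^*\kappa_Y$, $\rho=a\psi$ is equivalent to the paper's $\alpha=a\pi^*\kappa_Y$, $\rho=\psi$. Delete the garbled $\rho=\psi/a$ line, which does not work in general.

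The one real flaw is your justification of $\operatorname{Exc}(\pi)\subseteq\mathrm{Null}(\alpha)$; the paper simply asserts $\mathrm{Null}(\alpha)=\operatorname{Exc}(\pi)$. It is false that every positive-dimensional $V\subset\operatorname{Exc}(\pi)$ is contracted by $\pi$. For the blow-up of a smooth threefold along a smooth curve $B$, a section $V$ of the exceptional ruled surface maps isomorphically onto $B$, so $\int_V\pi^*\kappa_Y=\int_B\kappa_Y>0$.

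What you actually need is only that every point of $\operatorname{Exc}(\pi)$ lies on some null subvariety. Since $Y$ is normal, Zariski's main theorem shows that $\operatorname{Exc}(\pi)$ is exactly the union of the positive-dimensional fibers of $\pi$. Any irreducible component $F$ of such a fiber satisfies $\alpha|_F=0$, so $F$ is null. Your reverse inclusion, via $\pi|_V$ being bimeromorphic onto its image when $V\not\subset\operatorname{Exc}(\pi)$, is fine.

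Your worry about ``precisely'' is unnecessary. For a potential with analytic singularities, the polar set coincides with the set of points of positive Lelong number. Hence $E_+(T)=E_{nK}(\alpha)=\operatorname{Exc}(\pi)$ already gives singularities exactly along $\operatorname{Exc}(\pi)$, and you need no correction by $\log$ of generators of the ideal of $E$.
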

\begin{proof}
    Let $\kappa_Y$ be as above. Since $\omega_Y$ and $\kappa_Y$ are both smooth hermitian forms and $Y$ is compact, there is a constant $a> 0$ such that
    \begin{align*}
        \omega_Y\ge a\kappa_Y.
    \end{align*}
    Pulling back to $X$ we get 
    \begin{align*}
        \theta \ge a\pi^*\kappa_Y =: \alpha.
    \end{align*}
    Observe that $\alpha$ is smooth, closed, big, nef, semipositive, and strictly positive on $X\setminus E$.
    By monotonicity and the closedness of $\alpha$,
    \begin{align*}
        \MAlow\pare{\theta} \ge \MAlow\pare{\alpha}=\int_X\alpha^n>0.
    \end{align*}
    Since $\operatorname{Null}\pare{\alpha}=E$, Proposition \ref{NullalphaisE} yields a Kähler current
    \begin{align*}
        \alpha +\pot\rho &\ge \delta \eta
    \end{align*}
    with analytic singularities precisely along $E$. Since $\theta\ge \alpha$, the same $\rho$ satisfies Equation \ref{thetaboundedbelow}.
\end{proof}

\section{Estimates}

The goal of this section is to derive $C^0$ and $C^2$ estimates for $\varphi_\epsilon$ to obtain  uniform estimates on the metric $\tilde{\omega}_\varepsilon$.  

\begin{prop}\label{ZerothOrder}
Let $\varphi_\epsilon$ be as in Proposition \ref{MAPair}. Then there exists a uniform constant $C>0$ independent of $\varepsilon$ such that
\begin{align*}
    -C \le \varphi_\epsilon \le 0.
\end{align*}
\end{prop}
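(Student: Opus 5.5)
The upper bound $\varphi_\epsilon\le 0$ is immediate from the normalization $\sup_X\varphi_\epsilon=0$ in Equation \ref{cma}, so the content is the uniform lower bound. I would obtain it from the $L^\infty$ estimate for hermitian Monge-Ampère equations with degenerate reference forms due to Guedj-Lu \cite{GL22,GL23}, in the Ko{\l}odziej tradition. The setup is to write the equation as
\[
(\theta_\epsilon+\pot\varphi_\epsilon)^n = c_\epsilon\,\Omega = c_\epsilon e^{f}\,\eta^n, \qquad \theta\le\theta_\epsilon\le\theta_1,
\]
where $f=\log(\Omega/\eta^n)$ is smooth and independent of $\epsilon$. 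By Proposition \ref{Boundedcepsilon}, $c_\epsilon\le C$, so the right-hand side has uniformly bounded density. The key point is that all reference forms are squeezed between the fixed semipositive form $\theta$ and the hermitian form $\theta_1$.

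The estimate itself I would run through the standard capacity/domination argument with respect to the fixed semipositive form $\theta$, which has positive lower mass: $\MAlow(\theta)>0$ by Lemma \ref{MAposandrhoexists}.

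\begin{enumerate}
\item \textbf{Uniform $L^1$ bound.} Since $\theta_\epsilon\le\theta_1$, each $\varphi_\epsilon$ is $\theta_1$-psh with $\sup_X\varphi_\epsilon=0$, and compactness of normalized $\theta_1$-psh functions gives $\int_X|\varphi_\epsilon|\,\eta^n\le C$.
\item \textbf{Capacity decay.} For $\varphi_\epsilon$ I would bound the sublevel sets $\{\varphi_\epsilon<-s\}$ by the Monge-Ampère capacity $\operatorname{Cap}_{\theta}$. The comparison step compares $\varphi_\epsilon$ with $(1-t)\varphi_\epsilon + t u$ for $u\in\operatorname{PSH}(X,\theta)$, $0\le u\le 1$. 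The mass-error terms arising from non-closedness are controlled because $\MAup(\theta_1)<\infty$ (Proposition \ref{MAMonotone}.2, which uses Fujiki class $\mathcal{C}$), and the lower mass of $\theta$ is bounded away from zero.
\item \textbf{Volume versus capacity.} I need $\operatorname{Vol}\le C\exp(-c\,\operatorname{Cap}_\theta^{-1/n})$. This follows from uniform Skoda integrability for $\theta$-psh functions, via $\theta\le\theta_1$.
\item \textbf{De Giorgi/Ko{\l}odziej iteration.} Running the iteration yields $\varphi_\epsilon\ge -C$ with $C$ depending only on $\|e^f\|_{L^\infty}$, $\sup_\epsilon c_\epsilon$, $\MAlow(\theta)$, $\MAup(\theta_1)$ and $\theta_1$.
\end{enumerate}

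Rather than reproving this, I would cite the corresponding theorem in \cite{GL23} (uniform a priori estimates for families $\theta\le\theta_\epsilon\le\theta_1$ with $\theta$ semipositive, big in the sense $\MAlow>0$), after checking its hypotheses.

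The main obstacle is the uniformity in $\epsilon$ near $E$, where $\theta_\epsilon$ degenerates. Naive use of Tosatti-Weinkove's estimate gives constants blowing up as $\epsilon\to0$. The crucial input is that the degenerate limit form $\theta$ still satisfies $\MAlow(\theta)>0$. If citing \cite{GL23} directly is problematic, my fallback is to use $\rho$ from Lemma \ref{MAposandrhoexists}. Since $\theta+\pot\rho\ge\delta\eta$, the Tosatti-Weinkove style argument applied to $\varphi_\epsilon-\rho$ gives a bound on compact sets away from $E$. Extending this to a global lower bound still requires the pluripotential argument above. So the heart of the proof is the capacity comparison with $\operatorname{Cap}_\theta$ and the uniform mass bounds.
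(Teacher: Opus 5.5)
Your proposal is correct and follows the paper's route: the paper also reduces the lower bound to Guedj--Lu's oscillation estimate \cite[Theorem~A]{GL23}, checking essentially the hypotheses you identify. Concretely, uniform Skoda integrability for $\theta_1$ and the inclusion $\mathrm{PSH}(X,\theta_\epsilon)\subseteq\mathrm{PSH}(X,\theta_1)$ give a uniform moment bound $A_m(\Omega;\theta_\epsilon)\le A_m(\Omega;\theta_1)$ for $m>n$, while monotonicity, Proposition \ref{Boundedcepsilon} and Lemma \ref{MAposandrhoexists} give $c_\epsilon/\MAlow(\theta_\epsilon)\le \MAup(\theta_1)/\MAlow(\theta)$, so your capacity sketch and the $\rho$-based fallback are not needed.
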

\begin{proof}
Uniform Skoda integrability
for $\theta_1$ gives constants $a,A>0$ such that
\begin{align*}
    \int_X e^{-au}\Omega\le A
\end{align*}
for all $u\in \operatorname{PSH}\pare{X,\theta_1}$ with $\sup_X u=0$. Thus, for any $m>n$ we see
\begin{align*}
    \sup\left\{\int_X\pare{-u}^m\Omega: u\in \operatorname{PSH}\pare{X,\theta_1}, \sup_X u=0\right\} <\infty.
\end{align*}
If we define 
\begin{align*}
    A_m\pare{\Omega;\alpha} = \sup\set{\pare{\int_X\pare{-u}^m\Omega}^{1/m}}{u\in \operatorname{PSH}\pare{X,\alpha}, \sup_X u=0},
\end{align*}
then the inclusion $\mathrm{PSH}(X, \theta_\varepsilon)\subseteq \mathrm{PSH}(X,\theta_1)$ implies 
\[A_m\pare{\Omega;\theta_\epsilon}\leq A_m(\Omega; \theta_1)<\infty\] 
for all $0<\epsilon\le 1$. Further, Proposition \ref{Boundedcepsilon} gives
\begin{align*}
    \frac{c_\epsilon}{\MAlow\pare{\theta_\epsilon}} &\le \frac{\MAup\pare{\theta_1}}{\MAlow\pare{\theta}}.
\end{align*}
Lemma \ref{MAposandrhoexists} ensures $\textrm{MA}_-(\theta_\varepsilon)>0$, so applying Guedj-Lu {\cite[Theorem~A]{GL23}}, there exists some constant $C>0$ independent of $\varepsilon$ such that 
\begin{equation*}
    \operatorname{Osc}_X(\varphi_\epsilon) \le C.\qedhere
\end{equation*}
\end{proof}

Away from the exceptional locus, we also have the following $C^2$ estimates. 

\begin{prop}\label{OmegaEpsilonBounded}
For every $K\Subset X\setminus E$ there exists $C_K>1$ independent of $0<\epsilon\le 1$ such that on $K$
\begin{align*}
    C_K^{-1}\eta \le \tilde{\omega}_\epsilon \le C_K\eta.
\end{align*}
\end{prop}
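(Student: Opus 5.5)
The plan is a Tsuji-type degenerate $C^2$ estimate: a Chern--Lu / Aubin--Yau maximum principle computation in the Hermitian form of Tosatti--Weinkove, with the singular barrier $\rho$ of Lemma \ref{MAposandrhoexists}. Take $\delta>0$ and $\rho\in\operatorname{PSH}(X,\theta)$ with $\theta+\pot\rho\ge\delta\eta$ and $\rho\to-\infty$ exactly along $E$. By Proposition \ref{ZerothOrder}, $\varphi_\epsilon$ is uniformly bounded. It therefore suffices to prove a weighted estimate of the form
\[
\operatorname{tr}_\eta\tilde{\omega}_\epsilon\le C e^{-A\rho}\quad\text{on }X\setminus E,
\]
with $C,A$ independent of $\epsilon$. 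Since $\rho$ is bounded below on $K$, this gives $\tilde\omega_\epsilon\le C_K\eta$ on $K$. The lower bound then follows from the equation: $\tilde\omega_\epsilon^n=c_\epsilon\Omega$, with $c_\epsilon\ge\MAlow(\theta)>0$ by Proposition \ref{Boundedcepsilon} and $\Omega$ a smooth positive volume form. So $\det_\eta\tilde\omega_\epsilon$ is bounded below uniformly, and together with the upper bound on the eigenvalues this bounds all eigenvalues below.

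For the weighted estimate I would consider
\[
Q=\log\operatorname{tr}_\eta\tilde\omega_\epsilon - A\varphi_\epsilon + A\rho
\]
(equivalently $-A(\varphi_\epsilon-\rho)$ in the last two terms), which tends to $-\infty$ at $E$ and so attains its maximum on $X\setminus E$. The key identity is $\pot(\varphi_\epsilon-\rho)=\tilde\omega_\epsilon-\theta_\epsilon-(\pot\rho)$, so that
\[
-\Delta_{\tilde\omega_\epsilon}(A\varphi_\epsilon - A\rho)\ge -An + A\operatorname{tr}_{\tilde\omega_\epsilon}(\theta+\pot\rho)\ge -An+A\delta\operatorname{tr}_{\tilde\omega_\epsilon}\eta.
\]
Here the $\epsilon\eta$ term has the favorable sign. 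The Chern--Ricci-flat condition, $\operatorname{Ric}^{\mathrm{Ch}}(\tilde\omega_\epsilon)=0$ from Proposition \ref{MAPair}, handles the Ricci term in $\Delta_{\tilde\omega_\epsilon}\log\operatorname{tr}_\eta\tilde\omega_\epsilon$. That leaves the bisectional curvature of $\eta$, bounded by $C\operatorname{tr}_{\tilde\omega_\epsilon}\eta$, together with the torsion terms of $\eta$.

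The main obstacle is the torsion. Because $\eta$ is not Kähler, there are terms involving $T_\eta$ and $\del\operatorname{tr}_\eta\tilde\omega_\epsilon$. In Tosatti--Weinkove's $C^2$ estimate these are absorbed using $|\del\varphi|$ and the quadratic term $\operatorname{tr}_\eta\tilde\omega_\epsilon$-dependent bounds. In particular, the third-order cross term $2\operatorname{Re}\langle \del \operatorname{tr}_\eta\tilde\omega_\epsilon, \bar T\rangle/(\operatorname{tr}_\eta\tilde\omega_\epsilon)^2$ must be controlled at the maximum point by using $\del Q=0$. This expresses $\del\log\operatorname{tr}_\eta\tilde\omega_\epsilon$ through $A\del(\varphi_\epsilon-\rho)$, and then Cauchy--Schwarz gives $C A^{-1}$-small multiples of $|\del(\varphi_\epsilon-\rho)|^2_{\tilde\omega_\epsilon}$ plus $C\operatorname{tr}_{\tilde\omega_\epsilon}\eta$. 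The first of these should be absorbed by adding a term like $e^{-(\varphi_\epsilon-\rho)}$ or by the standard Phong--Sturm trick of replacing $A\varphi$ with $A\varphi - \frac{1}{1+ \sup\varphi-\varphi}$-type corrections; alternatively one uses the Guan--Li / Tosatti--Weinkove version with $Q=\log\operatorname{tr}_\eta\tilde\omega_\epsilon-A(\varphi_\epsilon-\rho)+\frac{1}{\varphi_\epsilon-\rho+C_0}$. Its extra gradient term $|\del(\varphi_\epsilon-\rho)|^2_{\tilde\omega_\epsilon}/(\varphi_\epsilon-\rho+C_0)^3$ dominates the torsion contributions. Note that $\varphi_\epsilon-\rho$ is bounded below but not above, and near $E$ it blows up; so I would use the inverse-power term with positivity ensured by $\rho\le C$.

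With $A$ chosen large relative to $\delta^{-1}$ and the geometry of $\eta$, but independent of $\epsilon$ (the dependence on $\theta_\epsilon$ enters only through uniform bounds on $\theta_1$), the maximum point satisfies $\operatorname{tr}_{\tilde\omega_\epsilon}\eta\le C$. Combining this with $\tilde\omega_\epsilon^n=c_\epsilon\Omega$ and $c_\epsilon\le C$ gives $\operatorname{tr}_\eta\tilde\omega_\epsilon\le C$ there, via $\operatorname{tr}_\eta\tilde\omega\le \frac{1}{(n-1)!}(\operatorname{tr}_{\tilde\omega}\eta)^{n-1}\frac{\tilde\omega^n}{\eta^n}$. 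Hence $Q\le C$ everywhere, and the bound on $\varphi_\epsilon$ yields the weighted estimate and the proposition.
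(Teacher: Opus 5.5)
Your proposal is correct and is essentially the paper's argument. Both run a Tosatti--Weinkove/Guan--Li maximum principle for $\log\operatorname{tr}_\eta\tilde\omega_\epsilon-\gamma(u)$ with $u=\varphi_\epsilon-\rho+C_0$ built from the barrier of Lemma \ref{MAposandrhoexists}. Both use Chern--Ricci-flatness, $\partial Q=0$ together with a concave correction to absorb the torsion cross term, and the determinant bound for the lower estimate; the paper takes $\gamma(u)=Au+\log u$ where you take $\gamma(u)=Au-1/u$.

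The one step you should state explicitly concerns the Cauchy--Schwarz absorption. Because $u$ is unbounded near $E$, it leaves a residual of order $A^2u^3(\operatorname{tr}_\eta\tilde\omega_\epsilon)^{-2}\operatorname{tr}_{\tilde\omega_\epsilon}\eta$, not $C\operatorname{tr}_{\tilde\omega_\epsilon}\eta$. So you need the Tosatti--Weinkove case split: if $\operatorname{tr}_\eta\tilde\omega_\epsilon\le Au^{3/2}$ at the maximum point, then $Q$ is already bounded there; otherwise the residual is at most $C\operatorname{tr}_{\tilde\omega_\epsilon}\eta$ and is absorbed. This is exactly what the paper's Step 3 contradiction argument supplies.
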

\begin{proof}
    We follow a similar argument to Guedj-Lu in {\cite[Theorem 4.2]{GL23}}. 
    
\textit{Step 1. (Local calculations)} By Lemma \ref{MAposandrhoexists}, there exists $\rho\in \mathrm{PSH}(X,\theta)$ with analytic singularities in $E$, smooth on $X\setminus E$, and a constant $\delta>0$ such that 
\begin{align*}
    \theta +\pot \rho &\ge \delta \eta \\
    \sup_X\rho &= 0
\end{align*}
in the sense of currents. We start locally and write
\begin{align*}
    \eta &= \sqrt{-1}h_{i\bar{j}}dz^i\wedge d\bar{z}^j, \\
    \theta_\epsilon &= \sqrt{-1}b_{i\bar{j}}^\epsilon dz^i\wedge d\bar{z}^j, \\
    \tilde{\omega}_\epsilon &= \sqrt{-1}\tilde{g}_{i\bar{j}} dz^i\wedge d\bar{z}^j, \quad\quad\tilde{g}_{i\bar{j}} = b_{i\bar{j}}^\epsilon +\pare{\varphi_\epsilon}_{i\bar{j}}.
\end{align*}
We can write the torsion locally as follows:
\begin{align*}
    T^\epsilon_{ij\bar{k}} = \tilde{g}_{j\bar{k},i}-\tilde{g}_{i\bar{k},j}= \theta_{j\bar{k},i}-\theta_{i\bar{k},j}+\epsilon\pare{h_{j\bar{k},i}-h_{i\bar{k},j}}
\end{align*}
Since $\theta$ and $\eta$ are both smooth forms on a compact manifold, and hence also have smooth components, we get the following bound on the torsion
\begin{equation}\label{TorsionBounded}
    \abs{T_{ij\bar{k}}^\epsilon}\le C,
\end{equation}
where $C$ is a constant for the lowered coordinate components on the chosen finite coordinate cover.

\textit{Step 2. (Yau-style maximum principle argument)} For a smooth positive function $f$ we shall write $\tilde{\Delta}f=\tilde{g}^{i\bar{j}}f_{i\bar{j}}$. Let
\[
S=\operatorname{Tr}_\eta(\tilde\omega_\varepsilon),\qquad R=\operatorname{Tr}_{\tilde{\omega}_\varepsilon}(\eta).
\]
We wish to apply a maximum principle argument to $\tilde\Delta\log(S)$. Let $C$ be the constant from Proposition \ref{ZerothOrder} and define
\begin{align*}
    u = \varphi_\epsilon -\rho + C+2.
\end{align*}
Note that $u>1$ by construction, and $\rho\to-\infty$ along its singular locus.
To deal with the torsion terms present in the hermitian setting, it is useful to consider the quantity
\[
H=\log(S)-\gamma(u)
\]
where $\gamma(u)=Au+\log u$ for some constant $A$ to be determined later. Hence, $H\to -\infty$ along the singular locus, meaning that $H$ attains its maximum value at some point $x_0\in X\setminus E$.
The following formula will prove helpful for our computations:
\begin{equation}\label{laplacian}
    \tilde\Delta\log f = \frac{\tilde\Delta f}{f} - \frac{|\partial f|^2_{\tilde\omega_\varepsilon}}{f^2}.  
\end{equation}
At $x_0$, we can choose holomorphic coordinates of Guan-Li as described in \cite{GuanLi} so that 
\begin{align*}
    h_{i\bar{j}} = \delta_{ij}, \quad h_{i\bar{i},j}=0, \quad \tilde{g}_{i\bar{j}} = \lambda_{i}\delta_{ij}.
\end{align*}
Observe that $S$ and $R$ can be written as
\begin{align*}
    S = \sum_{i=1}^n \lambda_i, \quad  R = \sum_{i=1}^n \lambda_i^{-1}.
\end{align*}
Likewise, we can define
\begin{align*}
    D=\frac{\tilde{\omega}_\varepsilon^n}{\eta^n} = \prod_{i=1}^n \lambda_i.
\end{align*}
Additionally, combined with Proposition \ref{Boundedcepsilon}, there are fixed uniform constants $d_1,d_2$ such that 
\begin{equation}\label{DBounds}
    0 < d_1 \le D
    \le d_2<\infty.
\end{equation}
The goal is now to obtain a uniform upper bound on $H$. Given this upper bound, for fixed $K\Subset X\setminus E$, $\rho$ is bounded below on $K$ and $\varphi_\epsilon$ is uniformly bounded, so $u\le C_K^\prime$ on $K$. Hence, $S\le C_K$ for some $C_K>1$, which means for each $i$,
\begin{align*}
    \lambda_i = \frac{D}{\prod_{j\ne i}\lambda _j} \ge \frac{d_1}{C_K^{n-1}} >0.
\end{align*}
After possibly redefining $C_K$, we have the desired inequality.
To prove this uniform bound, since $S=h^{k\bar{\ell}}\tilde{g}_{k\bar\ell}$ we have
\begin{align}\label{threeterm}
    \tilde{\Delta} S
    &= \sum_{i,k} \lambda_i^{-1}\tilde{g}_{k\bar{k},i\bar{i}}-2\operatorname{Re}\left(\sum_{i,j,k}\lambda_i^{-1}h_{j\bar{k},\bar{i}}\tilde{g}_{k\bar{j},i}\right)+\sum_{i,k,\ell}\lambda_i^{-1}\pare{h^{k\bar{\ell}}}_{i\bar{i}}\tilde{g}_{k\bar{\ell}}\\&=(I)+(II)+(III).
\end{align}
We now estimate each of $(I),(II)$ and $(III)$. Starting with $(III)$, by the Guan-Li coordinates,  
\begin{align*}
    (III)= \sum_{i,k}\lambda_i^{-1}\pare{h^{k\bar{k}}}_{i\bar{i}}\lambda_k.
\end{align*} 
There is a uniform constant $C$ such that $\abs{\pare{h^{k\bar{k}}}_{i\bar{i}}}\le C$, hence 
\begin{align*}
    \abs{(III)} &\le \sum_{i,k}\lambda_i^{-1}\abs{\pare{h^{k\bar{k}}}_{i\bar{i}}}\lambda_k \le C\sum_{i,k}\lambda_i^{-1}\lambda_k = CRS.
\end{align*}
As for $(I)$, the Chern-Ricci identity at the point $x_0$ implies
\begin{align*}
    \operatorname{Tr}_\eta\left( \operatorname{Ric}^{\operatorname{Ch}}\pare{\tilde{\omega}_\epsilon}\right) = \sum_{i,k}\lambda_i^{-1}\pare{-\tilde{g}_{i\bar{i},k\bar{k}}+\sum_j\lambda_j^{-1}\tilde{g}_{i\bar{j},k}\tilde{g}_{j\bar{i},\bar{k}}}.
\end{align*}
Since $\tilde{\omega}_\epsilon$ is Chern-Ricci-flat, the left side vanishes. Additionally, 
\begin{align*}
    \tilde{g}_{k\bar{k},i\bar{i}} &= \tilde{g}_{i\bar{i}, k\bar{k}}+\pare{b^\epsilon_{k\bar{k},i\bar{i}}-b^\epsilon_{i\bar{i},k\bar{k}}}.
\end{align*}
There is a uniform bound $C$ on the bracketed term, so we find
\begin{equation}\label{estimateeye}
    (I)\geq \sum_{i,j,k} \lambda_i^{-1}\lambda_j^{-1}\abs{\tilde{g}_{i\bar{j},k}}^2-CR.
\end{equation}
Finally, for $(II)$ we exploit the orthogonality condition ($h_{i\bar{i}, j}=0$) of the Guan-Li holomorphic coordinates to eliminate any diagonal terms where $j=k$. By the definition of the torsion $T^\varepsilon_{i\bar kj}$, we see
\begin{align*}
    \abs{(II)} \le 2\sum_{i,j\ne k}\abs{\lambda_i^{-1}h_{j\bar{k},\bar{i}}\tilde{g}_{i\bar{j},k}} + 2\sum_{i,j\ne k}\abs{\lambda_i^{-1}h_{j\bar{k},\bar{i}}T^\epsilon_{ik\bar{j}}}.
\end{align*}
Applying Cauchy-Schwarz to the first sum, we find
\begin{align*}
    |(II)|\le \sum_{i,j\ne k}\lambda_i^{-1}\lambda_j^{-1}\abs{\tilde{g}_{i\bar{j},k}}^2 + \sum_{i,j\ne k}\lambda_i^{-1}\lambda_j\abs{h_{j\bar{k},\bar{i}}}^2 + 2\sum_{i,j\ne k}\abs{\lambda_i^{-1}h_{j\bar{k},\bar{i}}T^\epsilon_{ik\bar{j}}}
    \end{align*}
Then, applying Equation \ref{TorsionBounded} we get
\begin{align*}
   |(II)| \le \sum_{i,j\ne k}\lambda_i^{-1}\lambda_j^{-1}\abs{\tilde{g}_{i\bar{j},k}}^2 + CRS + CR.
\end{align*}
Returning to Equation \ref{threeterm}, we now have the following lower bound:
\begin{equation}\label{DeltaS}
    \tilde{\Delta} S \ge \sum_{i,j} \lambda_i^{-1}\lambda_j^{-1}\abs{\tilde{g}_{i\bar{j},j}}^2-CRS-CR.
\end{equation}
We now move to the second ingredient we need to apply the formula in Equation \ref{laplacian}. Taking one derivative, we see by the definition of the torsion
\begin{align*}
    S_i = \sum_j \tilde{g}_{j\bar{j},i} = \sum_j \pare{\tilde{g}_{i\bar{j},j}+T^\epsilon_{ij\bar{j}}}.
\end{align*}
Thus, we see that
\[
\abs{\del S}_{\tilde{\omega}_\varepsilon}^2 = \sum_i \lambda_i^{-1}\abs{\sum_{j}\tilde{g}_{i\bar{j},j}}^2  + \sum_i \lambda_i^{-1}\abs{\sum_j T^\epsilon_{ij\bar{j}}}^2  + 2\operatorname{Re}\left(\sum_{i,j,k} \lambda_i^{-1}T^\epsilon_{ij\bar{j}}\tilde{g}_{k\bar{i},\bar{k}}\right).
\]
Applying Cauchy-Schwarz to the first sum and the bound in Equation \ref{TorsionBounded},  
\begin{equation}\label{gradSi}
\abs{\del S}_{\tilde{\omega}_\varepsilon}^2 \leq S\sum_{i,j} \lambda_i^{-1}\lambda_j^{-1}|\tilde g_{i\bar j,j}|^2 +CR + 2 \operatorname{Re}\left(\sum_{i,j,k} \lambda_i^{-1}T^\epsilon_{ij\bar{j}}\tilde{g}_{k\bar{i},\bar{k}}\right).    
\end{equation}
We now bound the last term. Note that
\begin{align*}
    H_{\bar i} = \frac{S_{\bar i}}{S} - \gamma^\prime\pare{u}\cdot u_{\bar{i}}.
\end{align*}
Since $x_0$ is chosen to be the maximum of $H$, the above vanishes at $x_0$. Hence,
\begin{align*}
   \sum_k \tilde{g}_{k\bar{i},\bar{k}} = S_{\bar{i}}-\sum_k \overline{T^\epsilon_{ik\bar{k}}}=S\gamma^\prime(u)u_{\bar{i}}-\sum_k \overline{T^\epsilon_{ik\bar{k}}}.
\end{align*}
Thus, 
\begin{align*}
&\abs{2\operatorname{Re}\left(
    \sum_{i,j,k}\lambda_i^{-1}
    T^\epsilon_{ij\bar{j}}\tilde{g}_{k\bar{i},\bar{k}}
\right)} \\
&\qquad=
\left|
    2S\gamma^\prime(u)\operatorname{Re}\left(
        \sum_{i,j}\lambda_i^{-1}
        T^\epsilon_{ij\bar{j}}u_{\bar{i}}
    \right)-2\operatorname{Re}\left(
        \sum_i\lambda_i^{-1}
        \abs{\sum_j T^\epsilon_{ij\bar{j}}}^2
    \right)
\right|.
\end{align*}
Once more applying Equation \ref{TorsionBounded}, we get
\[
\abs{2\operatorname{Re}\left(\sum_{i,j,k} \lambda_i^{-1}T^\epsilon_{ij\bar{j}}\tilde{g}_{k\bar{i},\bar{k}}\right)} \le \abs{2S\gamma^\prime(u)\operatorname{Re}\left(\sum_{i,j} \lambda_i^{-1}T^\epsilon_{ij\bar{j}}u_{\bar{i}}\right)}+CR.
\]
Using Young's inequality with weight $-\gamma^{\prime\prime}(u)$,  
\begin{align*}
\abs{2S\gamma^\prime(u)\operatorname{Re}\left(\sum_{i,j} \lambda_i^{-1}T^\epsilon_{ij\bar{j}}u_{\bar{i}}\right)} &\leq 
-S^2\gamma''(u)\sum_i \lambda_i^{-1}\abs{u_i}^2-\frac{\gamma^\prime\pare{u}^2}{\gamma^{\prime\prime}\pare{u}}\sum_i\lambda_i^{-1}\abs{\sum_j T^\epsilon_{ij\bar{j}}}^2\\
& \leq  -S^2\gamma''(u)\abs{\partial u}^2_{\tilde{\omega}_{\varepsilon}}-\frac{\gamma^\prime\pare{u}^2}{\gamma^{\prime\prime}\pare{u}}\cdot CR
\end{align*}
Therefore, applying Equation \ref{laplacian} and using Equation \ref{DeltaS} and Equation \ref{gradSi},  
\begin{align*}
    \tilde{\Delta}\log S \geq &\frac{1}{S}\left(\sum_{i,j} \lambda_i^{-1}\lambda_j^{-1}\abs{\tilde{g}_{i\bar{j},j}}^2-CRS-CR\right) \\&-\frac{1}{S^2}\left(S\sum_{i,j} \lambda_i^{-1}\lambda_j^{-1}|\tilde g_{i\bar j,j}|^2 +CR -S^2\gamma''(u)\abs{\partial u}^2_{\tilde{\omega}_{\varepsilon}}-\frac{\gamma^\prime\pare{u}^2}{\gamma^{\prime\prime}\pare{u}}\cdot CR\right)\\
    =& -CR-\frac{CR}{S}-\frac{CR}{S^2}\pare{1+\frac{\gamma^\prime\pare{u}^2}{-\gamma^{\prime\prime}\pare{u}}} + \gamma''(u)|\partial u|^2_{\tilde{\omega}_\varepsilon}.
\end{align*}
From the definition of $u$,
\begin{align*}
    \pot u = \tilde{\omega}_\epsilon -\pare{\theta+\pot\rho}-\epsilon \eta.
\end{align*}
Taking the trace with respect to $\tilde{\omega}_\epsilon$, we find
\begin{align*}
    \tilde{\Delta}u \le n - \pare{\delta+\epsilon}R\le n-\delta R.
\end{align*}
Therefore,
\begin{equation}\label{DeltaHi}
    \tilde \Delta H \geq (\delta \gamma'(u)-C)R-n\gamma'(u)-\frac{CR}{S}-\frac{CR}{S^2}\pare{1+\frac{\gamma^\prime\pare{u}^2}{-\gamma^{\prime\prime}\pare{u}}} 
\end{equation}
Note in particular that the gradient terms cancel. 
If $\gamma'(u)=A+\frac{1}{u}$, we have the simple bound
\[A\le A+\frac{1}{u}\le A+1\]
as well as 
\[-\gamma^{\prime\prime}\pare{u}=\frac{1}{u^2}.\] 
Hence, at the point $x_0$ which maximizes $H$, we can rewrite Equation \ref{DeltaHi} as 
\begin{align*}
    0 \ge \pare{\delta\gamma^\prime\pare{u}-C}R-n\gamma^\prime\pare{u}-\frac{CR}{S} -\frac{CR}{S^2}\cdot C_A\pare{1+u^2},
\end{align*}
where $C_A$ is a constant depending on $A$. Choosing $A$ large enough to ensure that 
\[\delta A -C\ge 1\] 
and absorbing fixed constants, we arrive at the following bound: 
\begin{equation}\label{RSBound}
    0 \ge R-C\pare{1+\frac{R}{S}+\frac{R}{S^2}\pare{1+u^2}}.
\end{equation}
\textit{Step 3. (Contradiction step)} Assume for the sake of contradiction that $H$ is not uniformly bounded, that is, there is a sequence $\epsilon_\ell\searrow 0$ such that the corresponding maximum points $x_\ell$ satisfy $H\pare{x_\ell}\nearrow\infty$. By the definition of $H$, 
\begin{align*}
    \log \frac{S\pare{x_\ell}}{u\pare{x_\ell}} &= H\pare{x_\ell}+Au\pare{x_\ell} \nearrow \infty.
\end{align*}
Therefore $\frac{S\pare{x_\ell}}{u\pare{x_\ell}}\nearrow\infty$ and since $u\ge 1$,  $S\pare{x_\ell}\nearrow \infty$. But, for sufficiently large $\ell$, Equation \ref{RSBound} implies $R\pare{x_\ell}\le 2C$. But,
\begin{align*}
    S\pare{x_\ell} = D\sum_i\prod_{j\ne i}\lambda_j^{-1}\le nDR\pare{x_\ell}^{n-1} \le nd_2\pare{2C}^{n-1}.
\end{align*}
Hence, $S$ is uniformly bounded, which is a contradiction.
\end{proof}

Fix $K\Subset U\Subset X\setminus E$. Apply Proposition \ref{OmegaEpsilonBounded} to $\overline{U}$ and consider
\begin{align*}
    \log\det\pare{b^\epsilon_{i\bar{j}}+\pare{\varphi_\epsilon}_{i\bar{j}}} = \log c_\epsilon + \log \Omega_{\text{loc}}.
\end{align*}
All derivatives of the right side have uniform bounds, as do all derivatives of $b^\epsilon_{i\bar{j}}$. Applying a local complex Evans-Krylov-Trudinger estimate and interior Schauder estimates (see Section 4 of \cite{TW-AJM} and Section 4 of \cite{GL23}), for all $k\ge 0$ and $0<\alpha<1$ 
\begin{equation}\label{PhiRegularity}
    \sup_{0<\epsilon<1}\norm{\varphi_\epsilon}_{C^{k,\alpha}\pare{K,\eta}}\le C_{K,k,\alpha}.
\end{equation}

To finish the proof of Theorem \ref{MainTheorem1}, we now deal with the limiting behavior.

\begin{prop}\label{EverythingConverges}
    Let $\epsilon_j\searrow 0$. After passing to a subsequence $\varepsilon_{j_k}$ there is a bounded function $\varphi_0\in \mathrm{PSH}(X,\theta)$ such that 
    \begin{align*}
        c_{\epsilon_{j_k}} &\longrightarrow c_0 \\
        \varphi_{\epsilon_{j_k}} &\longrightarrow \varphi_0
    \end{align*}
    where the convergence for the second equation is taken in $L^1\pare{X}\cap C^\infty_{\text{loc}}\pare{X\setminus E}$. Moreover,
    \begin{equation}\label{CMALimited}
        \sup_X\varphi_0 = 0, \quad \inner{\pare{\theta+\pot \varphi_0}^n} = c_0\Omega.
    \end{equation}
    If $\omega_Y$ is Kähler, then $c_0=\operatorname{vol}\pare{\theta}$.
\end{prop}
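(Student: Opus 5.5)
Plan: compactness first, then identify the limit equation via the local smooth convergence and a mass argument.

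By Proposition \ref{Boundedcepsilon}, $c_\epsilon\in[\MAlow(\theta),C]$. We extract a subsequence with $c_{\epsilon_{j}}\to c_0\ge\MAlow(\theta)>0$. By Proposition \ref{ZerothOrder}, $-C\le\varphi_\epsilon\le 0$, and each $\varphi_\epsilon\in\operatorname{PSH}(X,\theta_1)$ with $\sup_X\varphi_\epsilon=0$. Compactness of normalized quasi-psh functions gives a further subsequence converging in $L^1(X)$. By \eqref{PhiRegularity} and Arzelà–Ascoli on an exhaustion of $X\setminus E$ by compacts, a diagonal subsequence converges in $C^\infty_{\text{loc}}(X\setminus E)$ to the same $\varphi_0$.

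Next we check the properties of $\varphi_0$. Since $\theta_\epsilon+\pot\varphi_\epsilon\ge0$ and $\theta_\epsilon\to\theta$ smoothly, the limit satisfies $\theta+\pot\varphi_0\ge0$ in the sense of currents. So $\varphi_0$, taken as the usc regularization of the $L^1$ limit (which equals $(\limsup\varphi_{\epsilon_j})^*$), lies in $\operatorname{PSH}(X,\theta)$. It is bounded with $-C\le\varphi_0\le0$, because the uniform lower bound passes to a.e. limits and hence to the usc representative.

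For the normalization $\sup_X\varphi_0=0$, we use Hartogs' lemma: $\limsup_j\sup_X\varphi_{\epsilon_j}\le\sup_X\varphi_0$ for $L^1$-convergent quasi-psh sequences with dominating form $\theta_1$. This gives $\sup\varphi_0\ge0$, while $\varphi_0\le0$ is clear.

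On $X\setminus E$ the smooth convergence gives $(\theta+\pot\varphi_0)^n=c_0\Omega$ pointwise. The non-pluripolar product of Boucksom–Guedj–Lu puts no mass on the pluripolar set $E$, and on the open set $X\setminus E$, where $\varphi_0$ is smooth (or merely bounded), it coincides with the Bedford–Taylor product. Since $\Omega$ also charges nothing on $E$, we get $\langle(\theta+\pot\varphi_0)^n\rangle=c_0\Omega$ as measures on $X$. Because $\varphi_0$ is bounded, the angle-bracket product is just the Bedford–Taylor product, so nothing subtle happens here. I expect the only delicate point to be making sure BGL's construction for nonclosed $\theta$ indeed restricts to the local product on $X\setminus E$ and gives zero mass to $E$. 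I would cite their definition: the product is a limit of $\mathbf 1_{\{\varphi>-k\}}$-restricted local products, which for bounded $\varphi$ equals the Bedford–Taylor measure, and Bedford–Taylor measures of bounded functions do not charge pluripolar sets.

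For the Kähler case, $\theta$ is closed. The main obstacle is computing $c_0$, since smooth local convergence alone could lose mass near $E$; I would avoid this by integrating. By Bedford–Taylor integration by parts for the bounded $\theta$-psh $\varphi_0$,
\[
c_0=\int_X c_0\Omega=\int_X(\theta+\pot\varphi_0)^n=\int_X\theta^n=\operatorname{vol}(\theta).
\]
Alternatively, the equation $c_\epsilon=\int(\theta_\epsilon+\pot\varphi_\epsilon)^n$ expands by Stokes into $\int\theta^n$ plus terms that are $O(\epsilon)$ times integrals of $\eta$ against mixed products. These terms stay bounded by the uniform upper mass estimate of Proposition \ref{MAMonotone}, giving $c_\epsilon\to\operatorname{vol}(\theta)$ directly.
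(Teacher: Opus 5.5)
Your proposal is correct and follows the paper's proof essentially step for step: $L^1$-compactness of normalized $\theta_1$-psh functions plus Hartogs' lemma, diagonal $C^\infty_{\mathrm{loc}}(X\setminus E)$ convergence from \eqref{PhiRegularity}, the observation that neither the non-pluripolar product nor $\Omega$ charges $E$, and Bedford--Taylor integration by parts for $c_0=\operatorname{vol}(\theta)$. The paper additionally shows, by comparing $\tilde\omega_{\epsilon_i}$ and $\tilde\omega_{\epsilon_j}$ at a maximum of $\varphi_{\epsilon_i}-\varphi_{\epsilon_j}$, that $c_\epsilon$ is strictly increasing, which the statement itself does not need; your alternative for the Kähler constant also works and even gives $\abs{c_\epsilon-\operatorname{vol}(\theta)}=O(\epsilon)$, provided you expand $\int_X\theta^n=\int_X(\tilde\omega_\epsilon-\epsilon\eta)^n$ so that only the positive mixed masses $\int_X\eta^k\wedge\tilde\omega_\epsilon^{n-k}$ occur, and bound these by $2^n\MAup(\theta_1)$ using $\eta\le\theta_1$ and $\theta_1+\pot(\varphi_\epsilon/2)\ge\frac12(\theta_1+\tilde\omega_\epsilon)$.
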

\begin{proof}
    Note that $\varphi_{\epsilon_j}\in \mathrm{PSH}(X,\theta_1)$ and furthermore by Proposition \ref{ZerothOrder} 
    \begin{align*}
        -C\le \varphi_{\epsilon_j}\le 0, \quad \sup_X\varphi_{\varepsilon_j} = 0.
    \end{align*}
    By compactness of normalized quasi-plurisubharmonic functions, there exists an $L^1$-convergent subsequence $\varphi_{\varepsilon_{j_k}}$. Additionally, Hartogs compactness shows that the upper semicontinuous representative of the limit satisfies $\sup_X\varphi_0 = 0$. In the sense of currents, we thus have
    \begin{align*}
        \theta_{\epsilon_{j_k}}+\pot \varphi_{\epsilon_{j_k}} \rightharpoonup  \theta + \pot \varphi_0\ge 0.
    \end{align*}
    The uniform pointwise bounds pass to the limit, so $\varphi_0$ is similarly bounded. Since $c_{\epsilon_j}\in\sqbr{\MAlow\pare{\theta},C}$ by Proposition \ref{Boundedcepsilon}, after passing to a further subsequence, we see that $c_\epsilon\to c_0$.

    In fact, we can say something stronger about $c_0$. Firstly, every $c_{\epsilon_j}\to c_0$. If $\epsilon_{i}<\epsilon_j$, then evaluating at the maximum of $\varphi_{\epsilon_i}-\varphi_{\epsilon_j}$, we have 
    $\tilde{\omega}_{\epsilon_i}<\tilde{\omega}_{\epsilon_j}$
    and hence the constants obey $c_{\epsilon_i}<c_{\epsilon_j}$, i.e. $c_\epsilon$ is strictly increasing in $\epsilon$. Thus, every sequence $\varepsilon_j\searrow0$ corresponds to a sequence $c_{\varepsilon_j}\to c_0$. Note that we do still require the choice of subsequence to establish $\varphi_0$. Secondly, while $c_0$ need not have a cohomological interpretation, it is independent of the auxiliary Hermitian metric. For hermitian metrics $\eta_1,\eta_2$ choose $M\ge 1$ so that $M^{-1}\eta_1\le \eta_2\le M\eta_1$. The same maximum comparison gives for $\epsilon>0$ sufficiently small $c_{\epsilon/M}^{\pare{1}}\le c_\epsilon^{\pare{2}}\le c_{M\epsilon}^{(1)}$. Passing to the limit gives the independence. 

    By Equation \ref{PhiRegularity} and a diagonal argument, we have smooth convergence on compact subsets of $X\setminus E$. Passing to the limit in the smooth equation, we see
    \begin{align*}
        \inner{\pare{\theta+\pot\varphi_0}^n} = c_0\Omega \quad\text{on }X\setminus E.
    \end{align*}
    The defining property of the non-pluripolar product gives that the left-hand side does not charge $E$; see \cite{Li} Section 2.1 and \cite{BGL}. Since $\Omega$ is smooth and $E$ is a proper analytic subset, $\Omega\pare{E}=0$. Equality away from $E$, together with zero mass on $E$, implies global equality of the measures.

    Now we only need to prove that $c_0=\operatorname{vol}\pare{\theta}$ in the K\"ahler setting. Bedford-Taylor integration by parts and Stokes' theorem yield
    \begin{equation*}
        c_0=\int_X c_0\Omega = \int_X \pare{\theta+\pot \varphi_0}^n =\int_X \theta^n = \operatorname{vol}\pare{\theta}.\qedhere
    \end{equation*}
\end{proof}

It remains to identify the limiting potential in the \Kahler case. For the remainder of the paper we assume that $\theta$ is closed.

\begin{lem}\label{UniquePullback}
If $\theta=\pi^*\omega_Y$, then every bounded $u\in \mathrm{PSH}(X,\theta)$ is of the form $\pi^*\bar{u}$, for a unique bounded $\bar{u}\in \mathrm{PSH}(Y,\omega_Y)$.
\end{lem}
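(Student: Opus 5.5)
The plan is to show that a bounded $u\in\mathrm{PSH}(X,\theta)$ is constant along the fibres of $\pi$, push it down to a function $\bar u$ on $Y$, and then check that $\bar u$ is $\omega_Y$-psh on all of $Y$. Uniqueness is immediate: if $\pi^*\bar u_1=\pi^*\bar u_2$, then $\bar u_1=\bar u_2$ on $Y\setminus \pi(E)$, because $\pi$ is an isomorphism there. Since $\pi(E)$ is a proper analytic subset and $\omega_Y$-psh functions are determined by their values off a pluripolar set (take the usc regularisation of the restriction), we get $\bar u_1=\bar u_2$ everywhere.

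\emph{Constancy on fibres.} Let $F=\pi^{-1}(y)$ be a fibre and $G$ a connected compact analytic subvariety in it. We have $\theta|_F=\pi^*\omega_Y|_F=0$ as a form, since $\pi$ is constant on $F$. Near $y$, write $\omega_Y=\sqrt{-1}\partial\bar\partial g+(\text{error})$ only if $\omega_Y$ is locally $\partial\bar\partial$-exact; this holds here because we are assuming $\theta$ is closed. The local potential is produced as follows. Embed $Y$ locally in $\C^N$ and extend $\omega_Y$ to a closed smooth form on a ball, which is $\sqrt{-1}\partial\bar\partial$-exact by the local $\partial\bar\partial$-lemma; if closedness of the extension is a problem, I would instead take $g=A|z|^2$ with $\sqrt{-1}\partial\bar\partial g\ge\omega_Y$. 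Then $v=u+\pi^*g$ is psh on $\pi^{-1}(V)$ for a Stein neighbourhood $V$ of $y$. This second option suffices and avoids closedness. The function $v$ is bounded and psh near the compact connected fibre $F$, and $\pi^*g$ is constant on $F$. Therefore $v$ restricted to each irreducible compact curve in $F$ is subharmonic on a compact Riemann surface (via normalisation), hence constant. Since $F$ is connected, $u$ is constant on $F$. For points where this restriction is $-\infty$, boundedness of $u$ rules that out.

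\emph{Pushing forward and the extension step.} Define $\bar u(y)=u(\pi^{-1}(y))$. Off $\pi(E)$ this is just $u\circ\pi^{-1}$, which is $\omega_Y$-psh. Upper semicontinuity at points of $\pi(E)$ follows from properness of $\pi$ and upper semicontinuity of $u$: $\limsup_{y'\to y}\bar u(y')\le\sup_{\pi^{-1}(y)}u=\bar u(y)$. To see that $\bar u$ is $\omega_Y$-psh across the singular set, I use that $\bar u+g$ is bounded and psh on $V\setminus\pi(E)$, where $\pi(E)$ has codimension at least $2$ (in fact it is a finite set of points in the threefold case). By the Grauert--Remmert extension theorem for bounded psh functions across analytic sets on normal spaces, it extends uniquely to a psh function on $V$. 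The extension equals $\limsup$ regularisation, so it agrees with $\bar u+g$ by the usc computation above together with the fibre constancy (the maximum on the fibre is approached from outside since $u$ is the usc regularisation of itself off $E$). This last identification — that $\bar u$ equals the regularised extension at singular points — is the step I expect to be the main obstacle. The plan there is to use that $u$ on $X$ equals $\limsup$ over $X\setminus E$ (psh functions do not jump across analytic sets) and then to use properness of $\pi$.

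Finally, I take the definition of $\mathrm{PSH}(Y,\omega_Y)$ on the singular normal space to be "locally $\bar u+g$ psh in local embeddings" in the sense of Fornæss--Narasimhan, which is what the argument above verifies. Boundedness of $\bar u$ is inherited from $u$.
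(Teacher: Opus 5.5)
Your overall structure matches the paper's: show $u$ is constant on the fibres, push it down, and use normality of $Y$ to get positivity across $\pi(E)$. Where the paper simply cites a descent criterion for proper modifications of normal spaces, you unpack it with the Grauert--Remmert extension theorem. The identification you flag as the main obstacle does go through. For a quasi-psh function on a manifold, $u(x)=\limsup_{x'\to x,\,x'\notin E}u(x')$, by the sub-mean value property and the fact that $E$ is Lebesgue-null. Since also $E=\pi^{-1}(\pi(E))$, you get $\bar u(y)\le\limsup_{y'\to y,\,y'\notin\pi(E)}\bar u(y')$. Combined with your upper semicontinuity bound, this shows $\bar u+g$ equals its Grauert--Remmert extension.

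The genuine gap is the choice $g=A|z|^2$. It is fine for fibre constancy, but for the conclusion it only shows that $\bar u+A|z|^2$ is psh, i.e.\ that $\bar u$ is quasi-psh. It says nothing about $\omega_Y+\sqrt{-1}\partial\bar\partial\bar u\ge 0$ near $\pi(E)$. Your closing definition of $\mathrm{PSH}(Y,\omega_Y)$ as ``locally $\bar u+g$ psh'' is correct only when $g$ is an honest local potential, $\omega_Y=\sqrt{-1}\partial\bar\partial g$, which is exactly the option you set aside.

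No closed ambient extension is needed to repair this. Under the standing assumption $\omega_Y$ is K\"ahler on the normal space $Y$. In the Eyssidieux--Guedj--Zeriahi framework the paper uses, this means $\omega_Y$ is locally $\sqrt{-1}\partial\bar\partial h$ with $h$ smooth and strictly psh in local embeddings; this is the local potential $h$ the paper takes. With it you have $\sqrt{-1}\partial\bar\partial(u+\pi^*h)=\theta+\sqrt{-1}\partial\bar\partial u\ge 0$ on $\pi^{-1}(V)$. Your Grauert--Remmert argument applied to $\bar u+h$ then gives $\bar u\in\mathrm{PSH}(Y,\omega_Y)$.

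Three smaller points:
\begin{itemize}
\item You use connectedness of the fibres to get a single constant, but it needs a justification. It is Zariski's main theorem, which is where normality of $Y$ first enters.
\item Restricting to compact curves in $F$ suffices only when the fibres are curves, as for nodal threefolds. In general, argue as the paper does: a bounded psh function on a compact irreducible complex space is constant by the maximum principle.
\item Uniqueness of $\bar u$ follows immediately from surjectivity of $\pi$, so the pluripolar argument is unnecessary.
\end{itemize}
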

\begin{proof}
The fibers of $\pi$ are connected by Zariski's main theorem, and $\theta$ vanishes along any reduced fiber. The restriction of $u$ to any irreducible component is a bounded plurisubharmonic function on a compact complex space and thus constant. Connectedness forces the constants to agree, so $u$ is constant on every fiber and $\bar{u}\pare{y}=u\pare{x}$ for $x\in\pi^{-1}\pare{y}$ is thus a well-defined bounded upper-semicontinuous function.

To show positivity, let $h$ be a local potential for $\omega_Y$ on an open set $U$. Then the function $u+\pi^*h=\pi^*\pare{\bar{u}+h}$ is plurisubharmonic on $\pi^{-1}\pare{U}$. The descent criterion for plurisubharmonic functions under a proper modification of a normal complex space gives that $\bar{u}+h$ is plurisubharmonic, and thus $\bar{u}\in \operatorname{PSH}\pare{Y,\omega_Y}$; see {\cite[Theorem~6.3]{EGZ}} for further details.
\end{proof}

\begin{proof}[Proof of Corollary \ref{KahlerMainTheorem1}]
    By compactness and {\cite[Lemma~6.4]{EGZ}}, there are $\delta>0$ and $f\in L^{1+\delta}\pare{Y,\omega_Y^n}$ such that 
\begin{align*}
    \Omega_Y &= f\omega_Y^n.
\end{align*}
Then by {\cite[Theorem~6.3]{EGZ}}, there is a unique bounded normalized function $\psi_Y\in \mathrm{PSH}(Y,\omega_Y)$ such that 
\begin{align*}
    \pare{\omega_Y+\pot \psi_Y}^n = \operatorname{vol}\pare{\theta}\Omega_Y, \quad \sup_Y\psi_Y = 0.
\end{align*}
Its pullback is thus a bounded function $\varphi_0\in \mathrm{PSH}(X,\theta)$ solving Equation \ref{CMALimited}. 

Conversely, if $\varphi_0=\pi^*{\bar{\varphi}_0}$, we can push the equation forward to $Y_{\text{reg}}$. Since neither measure charges $Y_{\text{sing}}$,  
\begin{align*}
    \pare{\omega_Y+\pot {\bar{\varphi}_0}}^n = \operatorname{vol}\pare{\theta}\Omega_Y.
\end{align*}
Therefore, the uniqueness theorem from \cite{EGZ} and the normalization imply that $\varphi_0=\pi^*\psi_Y$. This finishes the proof of Theorem \ref{MainTheorem1}.
\end{proof}

The preceding argument also yields the following abstract closed-background result on $X$.

\begin{theorem}\label{MainTheorem1NoY}
    Let $X$ be a compact connected complex manifold of complex dimension $n$. Let $\theta$ be a smooth closed semipositive big $(1,1)$-form and $\eta$ a hermitian metric on $X$. Assume that $\MAup \pare{\theta+\eta}<\infty$. Let $\rho\in \mathrm{PSH}(X, \theta)$ be as in Equation \ref{thetaboundedbelow}, with analytic singularities along $E=\{z\in X: \rho(z) = -\infty\}$ and smooth on $X\setminus E$, and satisfying
    \[
    \theta+\sqrt{-1} \partial\overline\partial \rho\geq \delta \eta
    \]
    for some $\delta>0$. If $\Omega$ is a smooth positive probability volume form, the solutions
    \begin{align*}
        \pare{\theta+\epsilon\eta+\pot\varphi_\epsilon}^n=c_\epsilon\Omega,\quad\sup_X\varphi_\epsilon=0
    \end{align*}
    satisfy uniform upper and lower bounds for $c_\epsilon$ and a uniform $L^\infty$ estimate. Every sequence $\epsilon_j\searrow 0$ has a subsequence for which the potentials converge in $L^1(X)$, and smoothly on compact subsets of $X\setminus E$, to a bounded function $\varphi_0\in \mathrm{PSH}(X,\theta)$ solving 
    \begin{align*}
        \pare{\theta+\pot\varphi_0}^n=\operatorname{vol}\pare{\theta}\Omega
    \end{align*}
    in the non-pluripolar sense on $X$. If uniqueness holds in the relevant bounded pluripotential class, convergence is along the full family. Additionally, the limiting form $\tilde{\omega}_0=\theta+\pot\varphi_0$ is a smooth Kähler metric on $X\setminus E$ satisfying $\operatorname{Ric}\pare{\tilde{\omega}_0}=-\pot\log\Omega_{\text{loc}}$ on $X\setminus E$ and each approximating metric satisfies $\operatorname{Ric}^{\text{Ch}}\pare{\theta+\epsilon\eta+\pot\varphi_\epsilon}=-\pot\log\Omega_{\text{loc}}$.
\end{theorem}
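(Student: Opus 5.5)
The plan is to rerun the argument of Section 3 verbatim, checking at each step that only the listed hypotheses were used, and not the existence of $Y$. First, existence and Chern-Ricci identity: since $\theta_\epsilon=\theta+\epsilon\eta\ge\epsilon\eta$ is hermitian, Proposition \ref{MAPair} (that is, \cite{TW-JAMS}) gives the pair $(\varphi_\epsilon,c_\epsilon)$. Taking $-\pot\log$ of the equation gives $\operatorname{Ric}^{\text{Ch}}=-\pot\log\Omega_{\text{loc}}$, because $\log c_\epsilon$ is constant. For the constants, monotonicity (Proposition \ref{MAMonotone}.1) gives
\[
\operatorname{vol}(\theta)=\MAlow(\theta)\le\MAlow(\theta_\epsilon)\le c_\epsilon\le\MAup(\theta_\epsilon)\le\MAup(\theta+\eta)<\infty .
\]
Here the first equality uses closedness of $\theta$ and Bedford--Taylor integration by parts, and bigness together with semipositivity gives $\int_X\theta^n>0$. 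The finiteness of $\MAup(\theta+\eta)$ is now a hypothesis, replacing the Fujiki assumption.

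Next, the $L^\infty$ bound. I would reproduce Proposition \ref{ZerothOrder}. Uniform Skoda integrability on $\mathrm{PSH}(X,\theta+\eta)$ holds on any compact hermitian manifold, so $A_m(\Omega;\theta_\epsilon)\le A_m(\Omega;\theta_1)<\infty$ for $m>n$. The ratio $c_\epsilon/\MAlow(\theta_\epsilon)$ is bounded by $\MAup(\theta+\eta)/\operatorname{vol}(\theta)$. Then \cite[Theorem~A]{GL23} bounds the oscillation uniformly.

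For the local $C^2$ bound on $K\Subset X\setminus E$, the maximum-principle argument of Proposition \ref{OmegaEpsilonBounded} uses only three inputs: the function $\rho$ with $\theta+\pot\rho\ge\delta\eta$, $\rho\to-\infty$ on $E$ and $\rho$ smooth off $E$; the uniform torsion bound, which holds because $T^\epsilon$ involves only derivatives of $\theta$ and $\epsilon\eta$; and the bounds $d_1\le D\le d_2$ coming from the bounds on $c_\epsilon$. Here the Chern-Ricci form is $-\pot\log\Omega_{\text{loc}}$ rather than zero. So in term $(I)$ one gets an additional contribution $-\sum_i\lambda_i^{-1}(\log\Omega_{\text{loc}})_{i\bar i}\ge -CR$, which is absorbed exactly like the existing $-CR$ terms. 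Evans--Krylov and Schauder then give the bounds \eqref{PhiRegularity}.

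For the limit, I would argue as in Proposition \ref{EverythingConverges}. Compactness of normalized $\theta_1$-psh functions and a diagonal argument give convergence in $L^1\cap C^\infty_{loc}(X\setminus E)$, and the maximum-comparison argument shows that $c_\epsilon$ is monotone, hence converges along the full family. Off $E$ the limit satisfies $(\theta+\pot\varphi_0)^n=c_0\Omega$ smoothly, with $\tilde\omega_0\ge C_K^{-1}\eta$. Since $\theta$ and $\pot\varphi_0$ are closed, $\tilde\omega_0$ is Kähler on $X\setminus E$. Its Ricci form is $-\pot\log(c_0\Omega_{\text{loc}})=-\pot\log\Omega_{\text{loc}}$. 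The non-pluripolar product does not charge the pluripolar set $E$, and $\Omega(E)=0$, so the equation holds globally.

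The main point needing care is identifying $c_0=\operatorname{vol}(\theta)$. For a bounded $\theta$-psh $\varphi_0$ with $\theta$ closed, the non-pluripolar mass equals the Bedford--Taylor mass, and that equals $\int_X\theta^n$ by Stokes. This requires $\theta$ to be closed, but not Kähler, and it holds on an arbitrary compact manifold because the local potentials are bounded. Finally, if bounded solutions are unique, every subsequential limit coincides, so the whole family converges.
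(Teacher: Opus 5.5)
Your proposal is correct and follows the paper's proof: rerun the Section~3 arguments, replacing the input from $Y$ by the finiteness of $\MAup(\theta+\eta)$, the given $\rho$, and the closedness of $\theta$, with only term $(I)$ of the $C^2$ estimate changing. One small correction: that extra contribution in $(I)$ is $-\operatorname{Tr}_\eta\operatorname{Ric}^{\mathrm{Ch}}(\tilde{\omega}_\epsilon)=\operatorname{Tr}_\eta\bigl(\pot\log\Omega_{\text{loc}}\bigr)\ge -C_\Omega$, a trace with respect to $\eta$ rather than $\tilde{\omega}_\epsilon$, so it is a bounded constant rather than a multiple of $R$; the paper absorbs the resulting term $-C_\Omega/S$ in $\tilde{\Delta}\log S$ using $SR\ge n^2$, which gives $-C_\Omega/S\ge -C_\Omega R/n^2$.
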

\begin{proof}
    The only argument that has not yet been resolved concerns the general Chern-Ricci curvature. Since $\Omega$ and $\eta$ are fixed and smooth on the compact manifold $X$, there is a uniform constant $C_\Omega>0$ such that 
    \[\abs{\operatorname{Tr}_\eta-\pot\log\Omega_{\text{loc}}}\le C_\Omega.\] 
    The estimate for $(I)$ in Equation \ref{estimateeye} now becomes
    \begin{align*}
        (I) &\ge \sum_{i,j,k}\lambda_i^{-1}\lambda_j^{-1}\abs{\tilde{g}_{i\bar{j},k}}^2-CR-C_\Omega.
    \end{align*}
    Examining the $\tilde{\Delta}\log S$ lower bound, since $SR\ge n^2$ by Cauchy-Schwarz, we see
    \[-\frac{C_\Omega}{S}\ge -\frac{C_\Omega}{n^2}R.\] 
    The additional term $-C_\Omega/S$ is therefore bounded below by $-C_\Omega R/n^2$ and can be absorbed into the existing $CR$ term. The remaining estimates and compactness argument, therefore, apply unchanged. Finally, the asserted Ricci identities follow by applying $-\pot\log$ to both sides of the Monge-Ampère equations, using that their normalization constants are spatially constant.
\end{proof}

\section{Consequences on Conifolds}

In this section, we focus on the case when $Y$ is a projective threefold with ordinary double point singularities, and $\pi\colon X\to Y$ is a small crepant resolution with $E=\mathrm{Exc}(\pi)$. We denote an exceptional $(-1,-1)$-curve on $X$ by $C$. We denote a  \Kahler metric on $Y$ by $\omega_Y$, and fix a hermitian form $\eta$ on $X$ which is \Kahler in a small neighborhood of $E$ thanks to Lemma \ref{EtaKahlerNearE}. The forms
\[
\tilde{\omega}_\varepsilon = \pi^*\omega_Y + \varepsilon\eta+\pot\varphi_\varepsilon 
\]
are \Kahler and Ricci-flat on the same neighborhood. 
The calculations in \cite{Song} are local and apply without assuming $X$ is globally Kähler.

Identify a neighborhood of $C$ with a relatively compact neighborhood of the zero section in the local model $\hat{E}$ of Equation \ref{NodalLocalModel}. 
In bundle coordinates $\pare{z,\xi_1,\xi_2}$, set
\begin{align*}
    e^\rho &= \pare{1+\abs{z}^2}\pare{\abs{\xi_1}^2+\abs{\xi_2}^2}\\
    \omega_{\hat{E}} &= \pot e^\rho\\
    \hat{\omega} &= \omega_{FS}+\omega_{\hat{E}}.
\end{align*}
After rescaling the coordinates so that $e^\rho<1$, set $\mathcal{U}=\brac{\rho<0}$.
By our calculations in Section 3,
\[
(\tilde{\omega}_\varepsilon)^n = c_\varepsilon\Omega
\]
where $\Omega$ is a smooth positive volume form on $X$. Consider a neighborhood $N$ of $\overline{\mathcal{U}}$, and consider a collar $\mathcal{A}\Subset \mathcal{N}\setminus C$ of $\partial\mathcal{U}$. Then there is a uniform constant $B>1$ such that on $\mathcal{A}$ the following collar estimate holds:
\begin{equation}\label{CollarEstimate}
    B^{-1}\hat{\omega} \le \tilde{\omega}_\epsilon \le B\hat{\omega}.
\end{equation}
The following proposition allows us to get control of $\tilde{\omega}_\varepsilon$ away from $C$. 

\begin{prop}
There is a constant $D>0$ independent of $\epsilon$ such that 
\begin{equation}\label{omegaboundedoffcurve}
    D^{-1}\omega_{\hat{E}} \le \tilde{\omega}_\epsilon \le De^{-\rho}\omega_{\hat{E}}\quad\text{on }\mathcal{U}\setminus C.
\end{equation}
\end{prop}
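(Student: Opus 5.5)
We follow Song's local argument on $\mathcal{U}$, where $\tilde{\omega}_\epsilon$ is Kähler and Ricci-flat because $\eta$ is Kähler near $E$ by Lemma \ref{EtaKahlerNearE}. Both inequalities will come from maximum principle arguments on $\mathcal{U}$. Boundary control is supplied by the collar estimate (\ref{CollarEstimate}), and the uniform bound $\|\varphi_\epsilon\|_{L^\infty}\le C$ of Proposition \ref{ZerothOrder} supplies the auxiliary potentials.

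\textbf{Lower bound.} We apply the Chern--Lu inequality to the identity map $(\mathcal{U},\tilde{\omega}_\epsilon)\to(\mathcal{U},\omega_{\hat{E}})$. The form $\omega_{\hat{E}}=\pot e^\rho$ is the pullback of the flat cone metric $\pot(|z_1|^2+\dots+|z_4|^2)^{1/2}$-type form on the conifold; more precisely, $e^\rho=|w|^2$ for the coordinates $w$ of the small resolution, and its bisectional curvature is bounded above away from $C$ in the relevant sense. Since $\operatorname{Ric}(\tilde{\omega}_\epsilon)=0$, Chern--Lu gives
\[
\tilde{\Delta}\log\operatorname{Tr}_{\tilde{\omega}_\epsilon}\omega_{\hat{E}}\ge -K\operatorname{Tr}_{\tilde{\omega}_\epsilon}\omega_{\hat{E}}.
\]
Alternatively, and more simply, since $\omega_{\hat{E}}=\pot e^\rho$ is the pullback of a flat metric $\pot|w|^2$ (with $w\in\C^4$ via the map to the cone), the holomorphic map $\pi$ composed with the linear coordinates has vanishing target curvature, so $K=0$ up to the $|w|$-normalization. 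We combine this with $\tilde{\Delta}\varphi_\epsilon=n-\operatorname{Tr}_{\tilde{\omega}_\epsilon}(\pi^*\omega_Y+\epsilon\eta)\le n-c\operatorname{Tr}_{\tilde{\omega}_\epsilon}\omega_{\hat{E}}$, using $\pi^*\omega_Y\ge c\,\omega_{\hat{E}}$ on $\mathcal{U}$ after shrinking (both are pullbacks of metrics on the cone, and $\omega_Y$ is Kähler). The quantity $\log\operatorname{Tr}_{\tilde{\omega}_\epsilon}\omega_{\hat{E}}-A\varphi_\epsilon$ then satisfies a maximum principle. At an interior maximum we get $\operatorname{Tr}_{\tilde{\omega}_\epsilon}\omega_{\hat{E}}\le C$, and on $\partial\mathcal{U}$ the collar estimate bounds it. The uniform $C^0$ bound on $\varphi_\epsilon$ then yields $\tilde{\omega}_\epsilon\ge D^{-1}\omega_{\hat{E}}$.

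\textbf{Upper bound.} We use the volume form. On $\mathcal{U}$ we have $\omega_{\hat{E}}^3=c\,e^{\rho}\hat\Omega$ for a smooth volume form comparable to $\Omega$; this is a local computation in bundle coordinates, which we expect to give the cone's $r^2$-degeneracy. Writing $\lambda_i$ for the eigenvalues of $\tilde{\omega}_\epsilon$ relative to $\omega_{\hat{E}}$, we have $\prod\lambda_i=c_\epsilon\Omega/\omega_{\hat{E}}^3\le Ce^{-\rho}$, using Proposition \ref{Boundedcepsilon} for $c_\epsilon$. The lower bound gives $\lambda_i\ge D^{-1}$, so $\lambda_{\max}\le \prod\lambda_i/\prod_{j\neq\max}\lambda_j\le CD^2e^{-\rho}$, which is the claimed $\tilde{\omega}_\epsilon\le De^{-\rho}\omega_{\hat{E}}$.

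\textbf{Main obstacle.} We expect the hardest step to be the lower bound. It needs a correct curvature control for $\omega_{\hat{E}}$, since this form degenerates along $C$ and is not a metric there. It also needs the barrier $\pi^*\omega_Y\ge c\,\omega_{\hat{E}}$ to be uniform near $C$. We would first try to realize $\omega_{\hat{E}}$ as the pullback of a flat (or bounded-curvature Kähler) metric under the holomorphic map to the cone $\mathfrak{C}\subset\C^4$, viewed near the vertex. The Schwarz lemma then applies with bounded target curvature, with the maximum of the test function necessarily attained either off $C$ or on the collar. Failing that, we would add $\epsilon'\rho$ as a barrier, since $\rho\to-\infty$ on $C$, and then let $\epsilon'\to0$.
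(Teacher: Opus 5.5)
Your proposal is correct and is essentially the paper's proof. The lower bound is the Chern--Lu (Schwarz lemma) argument for the holomorphic map $F=\psi\circ\pi\colon\mathcal{U}\to\C^4$, for which $e^\rho=|F|^2$, so $\omega_{\hat{E}}=F^*\omega_{\C^4}$ has a flat target; your ``alternative'' formulation is the right one, and the identity map into the degenerate $\omega_{\hat{E}}$ and the cone-metric remark should be dropped. Combined with the collar estimate this gives the lower bound. The upper bound is exactly the paper's volume-form and eigenvalue argument, and the degeneracy $\omega_{\hat{E}}^3=e^\rho\cdot(\text{smooth volume})$ you anticipate does hold.

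The one difference is that the paper omits your $-A\varphi_\epsilon$ correction, and with it the barrier $\pi^*\omega_Y\ge c\,\omega_{\hat{E}}$ and the $C^0$ bound. With $K=0$, $\log\operatorname{Tr}_{\tilde{\omega}_\epsilon}\omega_{\hat{E}}$ is already subharmonic on all of $\mathcal{U}$, including along $C$, where the trace stays positive because $dF$ has rank two there. So the maximum principle with the collar bound suffices, and no $\epsilon'\rho$ barrier is needed.
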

\begin{proof}
For the lower bound, we follow a similar argument as in \cite{YauSchwarz}. Locally define
\begin{align*}
    q &= \operatorname{Tr}_{\tilde{\omega}_\epsilon}(\omega_{\hat{E}}).
\end{align*}
More geometrically, note that we can embed $\psi\colon \mathfrak{C}\to \mathbb{C}^4$, hence we can consider the composition $F=\psi\circ \pi\colon \mathcal{U}\to \mathbb{C}^4$. In particular, $\omega_{\hat{E}}=F^*\omega_{\mathbb{C}^4}$, i.e. the pullback of a flat metric. 
Since $\tilde{\omega}_\epsilon$ is Ricci-flat, the Chern-Lu inequality applied to $F$ gives 
\begin{align*}
    \Delta_{\tilde{\omega}_\epsilon} \log q \ge 0.
\end{align*}
Since $q$ extends smoothly across $C$ and $dF$ has rank two along $C$, $q>0$ there. Thus $\log q$ is well-defined and subharmonic. The collar estimate in Equation \ref{CollarEstimate} and the maximum principle imply that $q\le 3 B$ on $\mathcal{U}$ and thus if $D_1=3B$, we have
\begin{align*}
    {D_1}^{-1}\omega_{\hat E}\le \tilde{\omega}_\varepsilon.
\end{align*}
As for the upper bound, by unitary rotation of the fibers and a change of basis, we may calculate volumes at $z=0$ and $\pare{\xi_1,\xi_2}=\pare{r,0}$. In the coordinate frame $\pare{dz,d\xi_1,d\xi_2}$, the hermitian matrices of $\omega_{\hat{E}}$ and $\hat{\omega}$ are, up to fixed normalization,
\begin{align*}
    \operatorname{diag}\pare{r^2,1,1}\quad\text{and }\quad \operatorname{diag}\pare{1+r^2,1,1}.
\end{align*}
Since $e^\rho=r^2$ at this point and  $r\le1$, this proves
\begin{align*}
    \hat{\omega}^3\le Ke^{-\rho}\omega_{\hat{E}}^3.
\end{align*}
On $\overline{\mathcal{U}}$ the smooth volume $\Omega$ is uniformly comparable to $\hat{\omega}^3$, thus
\begin{align*}
    \tilde{\omega}_\epsilon^3=c_\varepsilon\Omega \le C\hat{\omega}^3\le D_2e^{-\rho}\omega_{\hat{E}}^3.
\end{align*}
If $\lambda_1,\lambda_2,\lambda_3$ are the eigenvalues of $\tilde{\omega}_{\epsilon}$ relative to $\omega_{\hat{E}}$, then the lower metric bound gives $\lambda_j\ge D_1^{-1}$, but the preceding volume comparison gives $\lambda_1\lambda_2\lambda_3\le D_2e^{-\rho}$. Thus, every $\lambda_j\le D_1^2D_2e^{-\rho}$, proving the upper bound. To finish, take $D=\max\{D_1, D_1^2D_2\}$. 
\end{proof}

The upper estimate in Equation \ref{omegaboundedoffcurve} degenerates along $C$, so a separate restriction argument is required. We work under the same change of coordinates and normalization as in the proof of the previous proposition. We see that 
\begin{align*}
    e^{-\rho}\frac{\omega_{\hat{E}}^3}{\omega_{\text{FS}}\wedge\omega_{\hat{E}}^2}
\end{align*}
extends to a bounded function across the zero section. On $\mathcal{U}\setminus C$,  
\begin{align*}
    \frac{\tilde{\omega}_{\epsilon}\wedge \omega_{\hat{E}}^2}{\omega_{\text{FS}}\wedge\omega_{\hat{E}}^2} \le De^{-\rho}\frac{\omega_{\hat{E}}^3}{\omega_{\text{FS}}\wedge\omega_{\hat{E}}^2}\le C_0.
\end{align*}
Notably, this bound holds near $C$, so taking the limit to $C$, we find 
\begin{align*}
    \tilde{\omega}_{\epsilon}\big|_C\le C_0\omega_{\text{FS}}.
\end{align*}
Additionally, because of Stokes' theorem and the fact that $\theta\big|_C=0$, 
 \begin{align*}
    \int_C\tilde{\omega}_\epsilon=\epsilon\int_C\eta.
\end{align*}

We would like to turn this into a diameter bound along $C$.
\begin{lem}
Let $h$ be a hermitian metric on $\Pone$ satisfying
\begin{align*}
    h\le C_0 h_{\text{FS}},\quad \operatorname{Area}_h\pare{\Pone}\le A.
\end{align*}
If $0<A\le 1$, then 
\begin{align*}
    \operatorname{diam}\pare{\Pone,h}\le CA^{1/3}
\end{align*}
where $C$ is a constant depending only on $C_0$ and $h_{\text{FS}}$.
\end{lem}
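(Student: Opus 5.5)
The plan is to bound the distance between two arbitrary points $p,q\in\Pone$ by joining them through a small ball, using a scale parameter $r\in(0,1]$ chosen later. Fix the round metric $h_{\text{FS}}$. Cover $\Pone$ by $N(r)\le C_1 r^{-2}$ geodesic $h_{\text{FS}}$-balls of radius $r$, where $C_1$ depends only on $h_{\text{FS}}$. Two kinds of estimate will be combined: the pointwise bound $h\le C_0h_{\text{FS}}$ controls lengths of short curves inside a ball, and the area bound controls curves crossing annuli via a coarea/length--area argument.

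\textbf{Step 1 (local step).} Given $p$, I would find a point $p'$ close to $p$ in $h$-distance. For $s\in[r,2r]$, let $\sigma_s=\partial B_{\text{FS}}(p,s)$, a circle of $h_{\text{FS}}$-length comparable to $s$. Write $h=f\,h_{\text{FS}}$ with $0<f\le C_0$. Cauchy--Schwarz and the coarea formula give
\[
\int_r^{2r}L_h(\sigma_s)\,ds\le \int_{B(p,2r)\setminus B(p,r)}\sqrt f\,dA_{\text{FS}}\le \Big(\int f\,dA_{\text{FS}}\Big)^{1/2}\big(\operatorname{Area}_{\text{FS}}\big)^{1/2}\le C\sqrt{A}\,r .
\]
Hence some circle $\sigma_{s_p}$ has $h$-length at most $C\sqrt{A}$. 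A radial $h_{\text{FS}}$-geodesic from $p$ to $\sigma_{s_p}$ has $h$-length at most $\sqrt{C_0}\,2r$. So every point of the circle $\sigma_{s_p}$ lies within $2\sqrt{C_0}r+C\sqrt A$ of $p$.

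\textbf{Step 2 (connecting step).} For $p,q$ far apart in $h_{\text{FS}}$, I would connect the circles $\sigma_{s_p}$ and $\sigma_{s_q}$ by a curve of small $h$-length. The same averaging works over a family of disjoint curves: take the family of $h_{\text{FS}}$-great-circle arcs or parallel translates sweeping a band of width $r$ joining the two balls. Averaging over the width-$r$ parameter gives a curve of $h$-length at most $\big(A\cdot \operatorname{Area}_{\text{FS}}(\text{band})\big)^{1/2}/r\le C\sqrt{A}/\sqrt r$. Such a curve must meet both circles. Combining the steps,
\[
\operatorname{dist}_h(p,q)\le C\big(r+\sqrt A+\sqrt{A/r}\big).
\]

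\textbf{Step 3 (optimization).} Choose $r=A^{1/3}\le1$. Then $\sqrt{A/r}=A^{1/3}$ and $\sqrt A\le A^{1/3}$, giving $\operatorname{diam}(\Pone,h)\le CA^{1/3}$ with $C=C(C_0,h_{\text{FS}})$.

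The main obstacle is Step 2: making the sweeping family precise, so that the curves are disjoint with a coarea Jacobian bounded by a constant, and each one genuinely intersects both chosen circles. The first thing I would try is to use, in a stereographic chart after rotating $p,q$ to lie near $0$ and on the real axis, the horizontal segments $\{\operatorname{Im}z=t\}$ for $|t|\le r$ between the two discs. When $p,q$ are $h_{\text{FS}}$-close (within $4r$), Step 1 alone suffices with a common ball. Near $\infty$, I would cover by the two standard charts, at the cost of a uniform constant.
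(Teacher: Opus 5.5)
Your proposal is correct and is essentially the paper's argument. Averaging over the horizontal segments $\{\mathrm{Im}\,z=t\}$, $|t|\le r=A^{1/3}$, in your Step 2 is exactly the paper's Fubini choice of $y'\in(-A^{1/3},A^{1/3})$, followed by Cauchy--Schwarz to get $h$-length at most $C\sqrt{A/r}=CA^{1/3}$. Rotating $p,q$ into one stereographic chart simply replaces the paper's chaining through a finite cover of coordinate disks.

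Your Step 1 is superfluous. The bound $h\le C_0h_{\text{FS}}$ already puts every point of $B_{\text{FS}}(p,2r)$ within $h$-distance $2\sqrt{C_0}\,r$ of $p$; the paper just uses the two short vertical segments here. So the coarea selection of a short circle is never needed.
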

\begin{proof}
Choose finitely many pairs of coordinate disks $U_j\Subset U_j^\prime$ such that the $U_j$ cover $\Pone$ and $h_{\text{FS}}$ is uniformly comparable to the Euclidean metric on $U_j^\prime$. Choose $A_0>0$ so that $A_0^{1/3}$ is smaller than the Lebesgue number. If $A\ge A_0$, then 
\begin{align*}
    \operatorname{diam}\pare{\Pone,h}\le \sqrt{C_0}\operatorname{diam}\pare{\Pone,h_{\text{FS}}}\le CA^{1/3},
\end{align*}
and we are done. So, assume $A<A_0$. Writing $h=fh_{\text{FS}}$, we see that $0\le f\le C_0$. Take two points in a shared coordinate disk, and after a change of coordinates, let their real coordinates be $\pare{0,0}$ and $\pare{x_0,0}$, with $x_0\le C$. By Fubini's theorem, there is $y^\prime\in \pare{-A^{1/3},A^{1/3}}$ such that
\begin{align*}
    \int_0^{x_0} f\pare{x,y^\prime}\, dx\le CA^{2/3}.
\end{align*}
So along the path from $(0,0)\to (0,y^\prime)\to\pare{x_0,y^\prime}\to\pare{x_0,0}$, by Cauchy-Schwarz the $h$-length of the horizontal segment is at most $CA^{1/3}$ and each of the vertical segments has $h$-length at most $CA^{1/3}$ as well. For arbitrary points, connect them through a uniformly bounded chain of overlapping coordinate disks from the fixed finite cover to get the desired diameter bound.
\end{proof}

Applying this lemma to our situation, we see that 
\begin{align*}
    \tilde{\omega}_\epsilon\big|_C \le C_0\omega_{\text{FS}},\quad \operatorname{diam}\pare{C,g_\epsilon|_C}\le C_0\epsilon^{1/3}
\end{align*}
for some uniform constant $C_0$. We would like to get a similar bound across all of $\mathcal{U}$, but this requires more machinery.

Define 
\begin{align*}
    S_1 = \brac{\xi_1=0},
\end{align*}
and on $\mathcal{U}\setminus S_1$ use the coordinates
\begin{align*}
    \nu_1=z\xi_1,\quad \nu_2=\xi_1,\quad w=\frac{\xi_2}{\xi_1}.
\end{align*}
The surfaces $L_w=\brac{w=\text{constant}}$ form a standard ruling. Set 
\begin{equation*}
    e^{\rho_1}=\pare{1+\abs{z}^2}\abs{\xi_1}^2=\abs{\nu_1}^2+\abs{\nu_2}^2,\quad \tau=\pot e^{\rho_1}.
\end{equation*}
On every $L_w$,
\begin{align*}
    \tau\big|_{L_w} &= \sqrt{-1}\pare{d\nu_1\wedge d\bar{\nu}_1+d\nu_2\wedge d\bar{\nu}_2}
\end{align*}
is flat. Define
\begin{equation*}
    H = \operatorname{Tr}_{\tau|_{L_w}}\pare{\tilde{\omega}_\epsilon\big|_{L_w}}.
\end{equation*}

\begin{lem}
On $\mathcal{U}\setminus S_1$,
\begin{align*}
    \Delta_{\tilde{\omega}_\epsilon}\log H\ge 0.
\end{align*}
\end{lem}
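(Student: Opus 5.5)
This is the Schwarz-lemma (Chern–Lu) computation for restrictions to a holomorphic foliation, as in Song's work. We work on the neighborhood where $\eta$ is Kähler, so $\tilde{\omega}_\epsilon$ is a Ricci-flat Kähler metric there; since $\mathcal{U}\subset$ that neighborhood after shrinking, no torsion terms appear. I would compute at a fixed point $p\in\mathcal{U}\setminus S_1$ in the coordinates $(\nu_1,\nu_2,w)$. In these coordinates $H=\sum_{a,b=1}^2 \delta^{a\bar b}\tilde g_{a\bar b}$, where $a,b$ index the $\nu$-directions. Thus $H$ is the trace of the $2\times2$ block of $\tilde g$ in the $\nu$-directions with respect to the constant Euclidean metric. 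Equivalently, $H=\operatorname{Tr}_{\tau}$ of the restriction of $\tilde{\omega}_\epsilon$ to the distribution $\ker dw$. The key structural point is that the leaves $L_w$ are complex submanifolds cut out by the holomorphic function $w$, and the reference metric $\tau|_{L_w}$ is flat with coordinates $\nu_1,\nu_2$ that are holomorphic functions on all of $\mathcal{U}\setminus S_1$.

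The cleanest route is to rewrite $H$ as a sum of gradient norms. Since $\nu_1,\nu_2,w$ are global holomorphic coordinates on $\mathcal{U}\setminus S_1$, the block of $\tilde g$ in the $\nu$-directions is determined by $\tilde g$ itself. I would instead use the dual picture: consider the holomorphic map $\Phi=(\nu_1,\nu_2)\colon \mathcal{U}\setminus S_1\to\C^2$ restricted to each leaf. Here $\Phi|_{L_w}$ is a biholomorphism onto its image with $\tau|_{L_w}=\Phi^*\omega_{\C^2}$. Then $H=\operatorname{Tr}_{\Phi^*\omega_{\C^2}}(\tilde{\omega}_\epsilon|_{L_w})$. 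This is exactly the quantity in a Schwarz lemma for the inverse map $\Phi^{-1}\colon(\C^2\supset\Phi(L_w),\text{flat})\to(X,\tilde{\omega}_\epsilon)$, i.e.\ a Chern–Lu quantity with a flat domain and a Ricci-flat Kähler target. This suggests the inequality comes from the Aubin–Yau/Chern–Lu second-order computation for $\operatorname{Tr}_{\omega_0}\omega$ when $\omega_0$ is flat. Concretely, Yau's computation gives, for Kähler $\tilde\omega$ and a Kähler reference $\omega_0$,
\[
\Delta_{\tilde\omega}\log\operatorname{Tr}_{\omega_0}\tilde\omega\ \ge\ \frac{-\operatorname{Tr}_{\omega_0}\operatorname{Ric}(\tilde\omega)+\tilde g^{i\bar j}R(\omega_0)_{i\bar j}{}^{k\bar l}\tilde g_{k\bar l}}{\operatorname{Tr}_{\omega_0}\tilde\omega}.
\]
Both terms vanish when $\operatorname{Ric}(\tilde\omega)=0$ and $\omega_0$ is flat. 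I would run this computation with the degenerate reference form $\tau=\sqrt{-1}(d\nu_1\wedge d\bar\nu_1+d\nu_2\wedge d\bar\nu_2)$ on the three-dimensional space. This form is flat, constant-coefficient, closed, and semipositive with kernel spanned by $\partial_w$. Then $H=\tau^{a\bar b}\tilde g_{a\bar b}$ with constant coefficients. Differentiating, $\tilde\Delta H=\sum_{a}\tilde g^{i\bar j}\tilde g_{a\bar a,i\bar j}$. By Kählerity, $\tilde g_{a\bar a,i\bar j}=\tilde g_{i\bar j,a\bar a}$. Ricci-flatness then gives
\[
\tilde g^{i\bar j}\tilde g_{i\bar j,a\bar a}=\tilde g^{i\bar q}\tilde g^{p\bar j}\tilde g_{i\bar j,a}\tilde g_{p\bar q,\bar a},
\]
so $\tilde\Delta H=\sum_a|\nabla_a\tilde g|^2$-type nonnegative terms.

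The remaining step, which I expect to be the main obstacle, is the Cauchy–Schwarz inequality $|\partial H|^2_{\tilde\omega}\le H\,\tilde\Delta H$. This is needed so that $\tilde\Delta\log H=\tilde\Delta H/H-|\partial H|^2/H^2\ge0$. The concern is that the reference is degenerate in the $w$ direction. I would choose coordinates at $p$ so that $\tilde g_{i\bar j}=\delta_{ij}$ and the $\nu$-block of $\tau$ is diagonalized; this is possible by a unitary change within $\operatorname{span}(\partial_{\nu_1},\partial_{\nu_2})$ combined with Gram–Schmidt for the $w$ direction. Then $H_i=\sum_a\tilde g_{a\bar a,i}$, with the sum running only over $a\in\{1,2\}$. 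The standard estimate
\[
\Big|\sum_a \tilde g_{a\bar a,i}\Big|^2\le H\sum_a \tilde g^{a\bar a}\ldots
\]
must then be carried out, using $\tilde g_{a\bar a,i}=\tilde g_{i\bar a,a}$ from Kählerity, exactly as in Yau's/Aubin's argument restricted to the index set $a\in\{1,2\}$. The extra $w$-index terms on the $\tilde\Delta H$ side are nonnegative and can only help. Finally, $H>0$ on $\mathcal{U}\setminus S_1$ because $\tilde{\omega}_\epsilon$ is positive definite, so $\log H$ is smooth there and the claimed inequality holds.
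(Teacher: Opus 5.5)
Your proposal follows the paper's proof: in coordinates adapted to the slices ($x_3=w$), $H$ is a constant-coefficient trace over the two leaf directions, and Ricci-flatness plus Kählerity give $\tilde\Delta H=\sum_{\alpha\le 2}\sum_{k,p}\lambda_k^{-1}\lambda_p^{-1}|\tilde g_{k\bar p,\alpha}|^2$; Cauchy--Schwarz over $\alpha\in\{1,2\}$, using $\tilde g_{\alpha\bar\alpha,k}=\tilde g_{k\bar\alpha,\alpha}$ and discarding the nonnegative $p\neq\alpha$ terms, then gives $|\partial H|^2_{\tilde\omega_\epsilon}\le H\,\tilde\Delta H$. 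The only difference is cosmetic: the paper normalizes $\tau|_{L_w}$ to the identity and diagonalizes $\tilde g$, whereas your choice $\tilde g_{i\bar j}(p)=\delta_{ij}$ puts weights $\mu_a^{-1}$ into $H$ and $H_i$ (so your formula $H_i=\sum_a\tilde g_{a\bar a,i}$ should carry them, though Cauchy--Schwarz absorbs them just as well), and the $\Phi^{-1}$ Chern--Lu heuristic plays no role in the actual argument.
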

\begin{proof}
Let $p\in \mathcal{U}\setminus S_1$. Following Step 1 of the proof of Proposition 4.1 in \cite{Song}, choose holomorphic coordinates $\pare{x_1,x_2,x_3}$ preserving the slice condition $x_3=w$. Under this choice, the coefficient matrix of $\tilde{\omega}_\varepsilon$ is diagonalized at $p$, i.e.
\[
(\tilde g_{i\bar{j}})(p) = \mathrm{diag}(\lambda_1,\lambda_2,\lambda_3),
\]
while
\begin{equation*}
\tau\big|_{L_w}=\sqrt{-1}\pare{dx_1\wedge d\bar x_1+dx_2\wedge d\bar{x}_2}.
\end{equation*}
Hence, we find $H(p)=\lambda_1+\lambda_2$. Evaluating the following identities at $p$,  since $\tilde{\omega}_\varepsilon$ is Ricci-flat, we find
\begin{align*}
\sum_{k=1}^3\lambda_k^{-1}\tilde{g}_{k\bar{k},\alpha\bar{\alpha}}=\sum_{k=1}^3\sum_{p=1}^3\lambda_k^{-1}\lambda_p^{-1}\abs{\tilde{g}_{k\bar{p},\alpha}}^2.
\end{align*}
Furthermore, since $\tilde{\omega}_\epsilon$ is K\"ahler,  
\begin{align*}
    \Delta_{\tilde{\omega}_\epsilon}H = \sum_{\alpha=1}^2\sum_{k=1}^3\sum_{p=1}^3\lambda_k^{-1}\lambda_p^{-1}\abs{\tilde{g}_{k\bar{p},\alpha}}^2.
\end{align*}
On the other hand, Cauchy-Schwarz implies
\begin{align*}
    \abs{\del H}^2_{\tilde{\omega}_\epsilon} = \sum_{k=1}^3\lambda_k^{-1}\abs{\sum_{\alpha=1}^2 \tilde{g}_{\alpha\bar{\alpha},k}}^2 \le \pare{\lambda_1+\lambda_2}\sum_{\alpha=1}^2\sum_{k=1}^3\sum_{p=1}^3\lambda_k^{-1}\lambda_p^{-1}\abs{\tilde{g}_{k\bar{p},\alpha}}^2=H\Delta_{\tilde{\omega}_\epsilon}H.
\end{align*}
Thus, by Equation \ref{laplacian} we arrive at
\begin{equation*}
    \Delta_{\tilde{\omega}_\epsilon}\log H = \frac{\Delta_{\tilde{\omega}_\epsilon} H}{H} -\frac{\abs{\del H}^2_{\tilde{\omega}_\epsilon}}{H^2}\ge 0.\qedhere
\end{equation*}
\end{proof}

\begin{cor}\label{tracebound}
    There is a constant \(C>0\), independent of \(\epsilon\), such that
    \[
    H\leq Ce^{-\rho_1}
    \]
    on $\mathcal{U}\setminus S_1$.
\end{cor}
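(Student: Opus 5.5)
The plan is to follow Song's argument: run a maximum principle for the subharmonic function $\log H$ from the preceding lemma, with a correction term that blows up on $S_1$ so that the maximum cannot lie on the bad set.

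\emph{Step 1 (an auxiliary plurisubharmonic function).} Note first that $e^{-\rho_1}=\pare{\abs{\nu_1}^2+\abs{\nu_2}^2}^{-1}$, so $\log\pare{e^{\rho_1}}=\log\pare{\abs{\nu_1}^2+\abs{\nu_2}^2}$. In the original coordinates this equals $\log\pare{1+\abs{z}^2}+\log\abs{\xi_1}^2$, so it is plurisubharmonic on $\mathcal{U}$ and tends to $-\infty$ exactly along $S_1=\brac{\xi_1=0}$. Set
\[
G=\log H+\rho_1 .
\]
The bound to prove is equivalent to $G\le C$. On $\mathcal{U}\setminus S_1$ the function $\rho_1$ is pluriharmonic plus $\log(1+|z|^2)$. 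Hence
\[
\Delta_{\tilde\omega_\epsilon}G\ge \Delta_{\tilde\omega_\epsilon}\log H\ge 0 ,
\]
using the preceding lemma.

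\emph{Step 2 (handling $S_1$).} $G$ need not tend to $-\infty$ near $S_1$, because $H$ is computed in the $\nu$-coordinates, which degenerate there. So I would perturb. For small $\delta>0$, consider
\[
G_\delta=\log H+(1+\delta)\rho_1 ,
\]
which is still subharmonic. To see that $G_\delta\to-\infty$ at $S_1$, I would bound $H$ crudely by something like $C_\epsilon e^{-\rho_1}$ with an $\epsilon$-dependent constant. This holds because $\tilde\omega_\epsilon$ is smooth on $\overline{\mathcal U}$ and the coordinate change $(z,\xi_1,\xi_2)\mapsto(\nu_1,\nu_2,w)$ has Jacobian entries blowing up at worst like $\abs{\xi_1}^{-1}$ on the $\nu$-directions. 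More precisely, $\tau|_{L_w}$ relates to the smooth form $\hat\omega$ with a factor of order $e^{\rho_1}$, so
\[
H\le C_\epsilon\,\mathrm{Tr}_{\hat\omega}\tilde\omega_\epsilon\,e^{-\rho_1}.
\]
Then $G_\delta\le C_\epsilon+\delta\rho_1\to-\infty$ at $S_1$, and $G_\delta$ attains its maximum on $\overline{\mathcal U}\setminus S_1$. By the maximum principle, this maximum sits on $\partial\mathcal U$.

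\emph{Step 3 (boundary control).} On $\partial\mathcal U$, which lies in the collar $\mathcal A$, Equation \ref{CollarEstimate} gives $\tilde\omega_\epsilon\le B\hat\omega$. There, away from $S_1$, $\tau|_{L_w}$ and $\hat\omega$ are comparable up to factors controlled by $e^{-\rho_1}$ (we are near $\partial\mathcal U$, where $e^\rho\approx 1$). This gives $H\le C e^{-\rho_1}$ on $\partial\mathcal U$, so $G_\delta\le C$ there with $C$ independent of $\epsilon$ and $\delta$. It follows that $G_\delta\le C$ on $\mathcal U\setminus S_1$, and letting $\delta\to0$ yields $H\le Ce^{-\rho_1}$.

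The main obstacle is the boundary comparison in Step 3 near $\partial\mathcal U\cap S_1$, where $\xi_1$ is small even though $e^\rho\approx1$. There I would check directly that
\[
\mathrm{Tr}_{\tau|_{L_w}}\hat\omega\lesssim e^{-\rho_1},
\]
which is the same Jacobian computation as in Step 2 but with uniform constants coming from $B$. Any leftover issue with non-uniformity on the boundary would be absorbed by the factor $e^{-\rho_1}$ already allowed in the statement.
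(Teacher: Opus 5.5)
Your proposal is correct and is essentially the paper's own argument. Both use the perturbed function $\log H+(1+\delta)\rho_1$, the $\epsilon$-dependent crude bound $H\le C_\epsilon e^{-\rho_1}$ to force $-\infty$ along $S_1$, subharmonicity from the preceding lemma together with $\pot\rho_1=\omega_{\text{FS}}\ge 0$, uniform boundary values from Equation \ref{CollarEstimate}, the maximum principle, and then $\delta\searrow 0$. The boundary comparison you single out as the main obstacle is in fact exact and uniform: on each slice, using $\pot\log\pare{\abs{\nu_1}^2+\abs{\nu_2}^2}=\omega_{\text{FS}}$ and $e^{\rho}=\pare{1+\abs{w}^2}e^{\rho_1}\le 1$, one gets $\mathrm{Tr}_{\tau|_{L_w}}\bigl(\hat\omega|_{L_w}\bigr)=e^{-\rho_1}+2\pare{1+\abs{w}^2}\le 3e^{-\rho_1}$ on $\overline{\mathcal U}$, up to the fixed normalization of $\hat\omega$, so no special care is needed near $\partial\mathcal U\cap S_1$.
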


\begin{proof}
    For $\delta>0$, define the quantity
\begin{equation*}
    I_\delta = \log H+\pare{1+\delta}\rho_1.
\end{equation*}
For fixed $\epsilon$, smooth equivalence of $\tilde{\omega}_\epsilon$ and $\hat{\omega}$ gives $H\le C_\epsilon e^{-\rho_1}$. Thus, $I_\delta\to-\infty$ at $S_1$. Its outer boundary values are uniform by Equation \ref{CollarEstimate}. Away from $S_1$, $\pot \rho_1 = \omega_{\text{FS}}\ge 0$, so $I_\delta$ is subharmonic, and combining this with the previous lemma, the maximum principle, and taking $\delta\searrow 0$,  
\begin{equation*}
    H\le Ce^{-\rho_1}
\end{equation*}
away from $S_1$, for some uniform constant $C$.
\end{proof}

Define the holomorphic vector field $V$ on $\mathcal{U}\setminus S_1$ by
\begin{equation*}
    V= V_1+V_2, \qquad \text{where }V_1 = \xi_1\dfrac{\del}{\del \xi_1},\quad V_2 = \xi_2\dfrac{\del}{\del \xi_2}
\end{equation*}
and the normalized vector field
\begin{equation*}
    \quad W=e^{-\rho/2}V.
\end{equation*}
In the coordinates $\pare{\nu_1,\nu_2,w}$, we see that 
\begin{align*}
    V_1 = \nu_1\dfrac{\del}{\del \nu_1}+ \nu_2\dfrac{\del}{\del \nu_2}- w\dfrac{\del}{\del w},
\end{align*}
which is not tangent to a general fixed slice. At points in $L_0$, one has $V_1=V$. The Bochner formula for a holomorphic vector field on a Ricci-flat \Kahler manifold yields
\[
\Delta _{\tilde{\omega}_\varepsilon}|V_1|^2_{\tilde{\omega}_\varepsilon} = |\nabla ^{1,0} V_1|_{\tilde{\omega}_\varepsilon}^2
\]
where $\nabla^{1,0}$ is the Levi-Civita connection. Hence, combining Cauchy-Schwarz and Equation \ref{laplacian}, we see
\begin{equation*}
\Delta_{\tilde{\omega}_\epsilon}\log\abs{V_1}^2_{\tilde{\omega}_\epsilon}\ge 0
\end{equation*}
away from its zero set.

Define
\begin{equation*}
    G_\delta = \log H+\log\abs{V_1}^2_{\tilde{\omega}_\epsilon}+\delta\rho_1.
\end{equation*}
Similar calculations to those in the proof of Corollary \ref{tracebound} show that, for each fixed $\epsilon$, $G_\delta\to-\infty$ at $S_1$ and the outer boundary values are uniform. A similar application of the maximum principle and taking $\delta\searrow 0$ gives 
\begin{equation*}
    \abs{V_1}^2_{\tilde{\omega}_\epsilon}H\le C
\end{equation*}
away from $S_1$, for some uniform constant $C$.

At any prescribed point, after a unitary change of the two fiber coordinates, we may assume that the point lies on $L_0$. The constants remain uniform because the group $U(2)$ is compact, and thus the definition of the trace gives
\begin{align*}
    \abs{V}^2_{\tilde{\omega}_\epsilon}\le e^\rho H.
\end{align*}
Multiplying this by our previous bound and taking square roots, we get 
\begin{align*}
    \abs{V}^2_{\tilde{\omega}_\epsilon}\le Ce^{\rho/2}.
\end{align*}
Equivalently,
\begin{equation*}
    \abs{W}^2_{\tilde{\omega}_\epsilon}\le Ce^{-\rho/2}.
\end{equation*}

Let \(g\) be a Riemannian metric on $X$. We define the intrinsic length of a path $\gamma\colon [0,1]\to X$ between points $p,q\in X$ via
\[
L_g(\gamma) = \int_0^1 \sqrt{g(\gamma'(s), \gamma'(s))}\, ds.
\]
The intrinsic distance between two points $p,q\in U$ for a subset $U\subseteq X$ is then given by
\[
d_{U,g}(p,q)=\inf_{\substack{\gamma\colon\sqbr{0,1}\to U\\\gamma \text{ piecewise }C^1\\\gamma(0)=p, \gamma(1)=q}} L_{g}(\gamma).
\]
Finally, the intrinsic diameter is defined to be
\[
\mathrm{diam}^{\mathrm{int}}(U,g) = \sup_{p,q\in U} d_{U,g}(p,q). 
\]

We are now ready to locally bound the intrinsic diameter on $\mathcal{U}$.

\begin{prop}\label{LocalDiameterBound}
    For
    \begin{align*}
        U_\delta &= \brac{e^\rho<\delta^2}\Subset\mathcal{U},
    \end{align*}
    one has the uniform bound
    \begin{equation*}
        \operatorname{diam}^{\mathrm{int}}\pare{U_\delta,g_\epsilon}\le C\delta^{1/2}+C\epsilon^{1/3}.
    \end{equation*}
    In particular,
    \begin{equation*}
        \lim_{\delta\searrow 0}\limsup_{\epsilon\searrow 0}\operatorname{diam}^{\mathrm{int}}\pare{U_\delta,g_\epsilon}=0.
    \end{equation*}
\end{prop}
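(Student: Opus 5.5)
The plan is to connect every point of $U_\delta$ to the exceptional curve $C$ by a short radial path, and then to connect points of $C$ to one another using the diameter bound along $C$ already established. Both pieces of the path must stay inside $U_\delta$.

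\emph{Radial flow.} The holomorphic vector field $V=\xi_1\partial_{\xi_1}+\xi_2\partial_{\xi_2}$ generates the real flow $(z,\xi)\mapsto(z,e^{-t}\xi)$. Along this flow $e^\rho$ decreases monotonically, so the flow preserves each $U_\delta$ and contracts it onto the zero section $C$.

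Fix $p=(z,\xi)\in U_\delta\setminus C$ and set $r_0=e^{\rho(p)/2}<\delta$. Consider the path $\gamma(s)=(z,s\xi/|\xi|\cdot|\xi|)$, that is, $\xi\mapsto s\xi$ for $s\in(0,1]$. It stays in $U_\delta$, has velocity $s^{-1}\,\mathrm{Re}(V)$, and satisfies $e^{\rho(\gamma(s))}=s^2r_0^2$. We use the bound $|V|^2_{\tilde\omega_\epsilon}\le Ce^{\rho/2}$ derived just before the statement. Note that $\mathrm{Re}V$ and $V$ have comparable Riemannian norm, up to a factor $\sqrt2$ from conventions. This gives
\[
|\dot\gamma(s)|_{g_\epsilon}\le C s^{-1}(sr_0)^{1/2}=Cr_0^{1/2}s^{-1/2}.
\]
Integrating over $s\in(0,1]$ bounds the length of $\gamma$ by $2Cr_0^{1/2}\le C\delta^{1/2}$.

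Since $\tilde\omega_\epsilon$ is a smooth metric on $X$ for fixed $\epsilon$, the path extends continuously to its endpoint $\gamma(0)=(z,0)\in C$. Hence $d_{U_\delta,g_\epsilon}(p,C)\le C\delta^{1/2}$.

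\emph{Along $C$, and the triangle inequality.} For two points $p,q\in U_\delta$, let $p',q'\in C$ be their radial projections. By the Lemma on hermitian metrics on $\Pone$, applied with $h=\tilde\omega_\epsilon|_C$ (so that $h\le C_0\omega_{\mathrm{FS}}$ and $\mathrm{Area}=\epsilon\int_C\eta\le A$), there is a path in $C$ from $p'$ to $q'$ of $g_\epsilon$-length at most $C\epsilon^{1/3}$. Here we use that $g_\epsilon$ restricted to $C$ dominates the induced intrinsic length on $C$, so lengths of curves in $C$ agree. This path lies in $C\subset U_\delta$.

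Concatenating the three pieces gives
\[
d_{U_\delta,g_\epsilon}(p,q)\le 2C\delta^{1/2}+C\epsilon^{1/3}.
\]
Taking the supremum over $p,q$ proves the first claim. The iterated limit follows immediately, since the $\epsilon^{1/3}$ term vanishes first as $\epsilon\searrow0$, and then $C\delta^{1/2}\to0$ as $\delta\searrow0$.

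\emph{Main obstacle.} The delicate point is that the bound $|V|^2\le Ce^{\rho/2}$ was derived only on $\mathcal U\setminus S_1$, with a reduction to the slice $L_0$ by a $U(2)$ rotation. I would handle this by noting two things. First, that rotation argument applies at every point of $\mathcal U\setminus C$, because the constants are $U(2)$-invariant. Second, $V$ itself is $U(2)$-invariant, so the estimate holds on all of $\mathcal U\setminus C$, including the points of $S_1$ that are off $C$. Continuity of $|V|^2_{\tilde\omega_\epsilon}$ for fixed $\epsilon$ also covers $S_1$.

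A second check is needed: the path must avoid $C$ except at its endpoint, which holds because $s>0$. Finally, the integrability of $s^{-1/2}$ at $s=0$ is exactly what makes the length finite. This is where the exponent $1/2$ in $\delta^{1/2}$ comes from.
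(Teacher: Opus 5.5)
Your proof is correct and follows the paper's argument. You join each point to $C$ by a fiberwise radial path whose length is controlled by $|V|^2_{\tilde\omega_\epsilon}\le Ce^{\rho/2}$; this is the paper's $|W|_{\tilde\omega_\epsilon}\le CR^{-1/2}$ with $R=e^{\rho/2}$, parametrized by $s$ instead of $R$. You then join the two radial projections by a path in $C$ of length at most $C\epsilon^{1/3}$. Your added checks fill in details the paper leaves implicit: that the $V$-bound extends to $S_1\setminus C$ via the $U(2)$ rotation, and that all paths stay inside $U_\delta$.
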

\begin{proof}
Set $R=e^{\rho/2}$. Along a fiberwise radial ray, the real radial direction is, up to a uniform constant, the real part of $W$. Since $\abs{W}^2_{\tilde{\omega}_\epsilon}\le Ce^{-\rho/2}$ implies $\abs{W}_{\tilde{\omega}_\epsilon}\le CR^{-1/2}$, a radial segment from radius $\delta$ to the zero section has length at most
\begin{align*}
    C\int_0^\delta R^{-1/2}\, dR\le C\delta^{1/2}.
\end{align*}
Any two points in $U_\delta$ can be joined by two such paths and a path in $C$. Since $\operatorname{diam}\pare{C,g_\epsilon|_C}\le C_0\epsilon^{1/3}$, this gives the desired bound.
\end{proof}

Finally, we also recall the following result of Zhang:

\begin{prop}[{\cite[Theorem~1.4]{Zhang2025}}]\label{zhang}
    Let $(X,p)$ be a germ of an isolated log terminal algebraic singularity, and $C(X,p)$ and $W(X,p)$ be the $K$-polystable Fano cone and the $K$-semistable Fano cone, respectively, canonically associated to the two-step degeneration of the singularity. Suppose $C(X,p)$ is isomorphic to $W(X,p)$ as Fano cones, and $C(X,p)$ has a smooth link.  Then every singular Kähler-Einstein metric with non-positive Ricci curvature and with bounded potential is conical at $p$. In particular, there exists a Calabi-Yau cone metric $g_C$, a number $\lambda>0$, and an identification $\Phi\colon U_0\to U_p$ between neighborhoods of the cone vertex $o$ and the singular point $p$, taking $o$ to $p$ and restricting to a diffeomorphism on the punctured neighborhoods, such that, for every integer $k\ge 0$,
    \begin{align*}
        \abs{\nabla_{g_C}^k\pare{\Phi^*g-g_C}}_{g_C} &= O\pare{r^{\lambda-k}},
    \end{align*}
    where $r$ is the cone radius. After perhaps shrinking the neighborhoods, $\Phi^*g$ and $g_C$ are uniformly equivalent on the punctured neighborhood.
\end{prop}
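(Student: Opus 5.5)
The plan is to deduce conicality from three ingredients: the algebraic structure of the tangent cone at $p$, a weighted perturbation of the Calabi--Yau cone metric, and a linear decay analysis on dyadic annuli. \emph{First}, I work locally near $p$ with $\omega=\omega_{\mathrm{loc}}+\pot\phi$, where $\phi$ is bounded and $\operatorname{Ric}(\omega)=\mu\,\omega$ with $\mu\le 0$. Boundedness of $\phi$ together with the local Donaldson--Sun theory for singular K\"ahler--Einstein metrics (as extended by Datar--Fu--Song and Li--Wang--Xu) gives the following: every rescaled limit $(X,\lambda_j^2 g,p)$ with $\lambda_j\to\infty$ subconverges in the pointed Gromov--Hausdorff sense to a metric cone, and that cone is an affine variety. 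By the algebraic identification of Li--Wang--Xu, this affine variety is the $K$-polystable cone arising from the two-step degeneration of $(X,p)$. The first step of that degeneration is taken along the normalized-volume minimizing valuation and produces $W(X,p)$; the second step produces $C(X,p)$. The hypothesis $C(X,p)\cong W(X,p)$ collapses the second step. Hence every tangent cone equals $C(X,p)$, with its canonical Reeb field $\xi$, and the tangent cone is unique. Because the link is smooth and $C(X,p)$ is $K$-polystable, Collins--Sz\'ekelyhidi supplies a Calabi--Yau cone metric $g_C=\tfrac12\,dd^c r^2$ that is smooth away from the vertex $o$.

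\emph{Second}, I build the map $\Phi$ using the one-step degeneration. The minimizing valuation is quasi-monomial, and it induces a filtration on $\mathcal{O}_{X,p}$ whose associated graded ring is $\mathcal{O}_{C}$. Pick homogeneous generators of $\mathcal{O}_{C}$ and lift them to generators of $\mathcal{O}_{X,p}$; this embeds both $X$ and $C$ in a common $\C^N$. In that embedding, the equations of $X$ are the equations of $C$ plus terms of strictly larger $\xi$-weight. Let $\lambda_0>0$ be the gap between weights. After a choice of local slice, the nearest-point projection gives a diffeomorphism $\Phi\colon U_0\setminus\{o\}\to U_p\setminus\{p\}$ satisfying
\[
\abs{\nabla^k_{g_C}(\Phi^*J-J_C)}_{g_C}=O\pare{r^{\lambda_0-k}}.
\]
On this chart I write $\Phi^*\omega=\omega_C+\pot u+\beta$, where $\beta$ is an explicit error of order $O(r^{\lambda_0})$ coming from the complex-structure mismatch. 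The K\"ahler--Einstein equation then becomes $(\omega_C+\pot u+\beta)^n=e^{-\mu u+F}\omega_C^n$ with $F=O(r^{\lambda_0})$. Note that $\mu\le0$ contributes an error that is lower order in scaling, of size $O(r^2)$.

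\emph{Third}, I prove decay of $u$, and I expect this to be the main obstacle. Uniqueness of the tangent cone gives only $u/r^2\to0$ qualitatively; by itself it does not give a rate. My first attempt will be a three-annulus argument on the dyadic shells $A_j=\{2^{-j-1}<r<2^{-j}\}$. Rescale each $A_j$ to unit size and linearize the equation there. The linearization is the Laplacian of $g_C$, perturbed by terms that tend to zero. Since the link is smooth, this Laplacian has discrete indicial roots. Only two groups of roots could obstruct decay. The first is the root $2$, corresponding to the scaling mode $r^2$. The second is made up of the roots corresponding to holomorphic Killing fields and to the automorphisms of $C$ that commute with $\xi$. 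I will exclude both. For the first group, the fixed normalization of the cone removes the scaling mode. For the second, the uniqueness of the tangent cone excludes it; this is where $C\cong W$ is used again, because it rules out drift along the automorphism orbit, which is the mechanism that would otherwise produce only logarithmic rates. The three-annulus lemma, applied uniformly in $j$, then gives
\[
\sup_{A_j}\abs{u}\le C\,2^{-j(2+\lambda)}
\]
for some $0<\lambda\le\lambda_0$.

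\emph{Finally}, scaled interior Schauder estimates on each rescaled annulus $A_j$ upgrade the $C^0$ decay to the full estimate $\abs{\nabla^k_{g_C}(\Phi^*g-g_C)}_{g_C}=O(r^{\lambda-k})$ for every $k\ge0$. The case $k=0$, after shrinking the neighborhoods, gives the uniform equivalence of $\Phi^*g$ and $g_C$ on the punctured neighborhood, as the statement claims.
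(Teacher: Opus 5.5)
The paper gives no proof of this statement. It is cited from Zhang's Theorem~1.4 and used as a black box in Lemma~\ref{NoNeckMet}, so there is no in-paper argument to compare with. Judged on its own, your outline has a reasonable architecture: a filtration gauge from $C\cong W$, a weighted linearization, a three-annulus argument on dyadic shells, and a Schauder bootstrap. However, the third step presupposes what it is meant to prove.

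First, nothing gives a priori control of the metric in your fixed gauge $\Phi$. The error $\beta$ is not explicit. It arises as $d$ of $(\Phi^*J-J_C)$ applied to $d(\psi\circ\Phi)$, where $\psi$ is a local potential of $\omega$. So it is $O(r^{\lambda_0})$ only if $\nabla(\psi\circ\Phi)$ and $\nabla^2(\psi\circ\Phi)$ are already bounded in $g_C$ at the natural scale, i.e.\ only if $\Phi^*g$ is already uniformly equivalent to $g_C$. That is the conclusion.

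Likewise, uniqueness of the metric tangent cone does not give $u=o(r^2)$ in the gauge $\Phi$. It only provides Gromov--Hausdorff approximations at each scale. Meanwhile $\Phi$ depends on an arbitrary choice of homogeneous generators, so it is defined only up to $\mathrm{Aut}(C,\xi)$ and higher-weight corrections. One can plausibly extract more by comparing the $L^2$-orthonormal holomorphic embeddings at scale $2^{-j}$ with the filtration embedding. Even then, the natural conclusion is only that at scale $2^{-j}$ the metric $\Phi^*g$ is close to $\sigma_j^*g_C$ for some $\sigma_j\in\mathrm{Aut}(C,\xi)$ that may depend on $j$.

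This is exactly why your treatment of the neutral degree-$2$ modes fails. The harmonic degree-$2$ potentials coming from $\mathrm{Aut}(C,\xi)$ are gauge directions. They are present whenever the cone has holomorphic automorphisms commuting with $\xi$ beyond the complexified Reeb flow; for the conifold this is all of $SO(4,\C)$. Uniqueness of the tangent cone up to isometry is compatible with a slow drift $\sigma_j\to\infty$, so it cannot exclude these modes.

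The hypothesis $C\cong W$ rules out a different phenomenon: drift of complex structures along the second-step degeneration $W\rightsquigarrow C$. Its actual role in your scheme is to produce $\Phi$ with polynomially decaying $\Phi^*J-J_C$; it says nothing about $\mathrm{Aut}(C,\xi)$. Note also that uniqueness of the tangent cone follows from the algebraic two-step description for every germ and does not use $C\cong W$.

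The missing ingredient is a gauge-fixing scheme:
\begin{enumerate}
\item at each scale, choose $\sigma_j$ to remove the neutral component;
\item apply a nonlinear, inhomogeneous three-annulus lemma to the remainder;
\item show that the increments $\sigma_{j+1}\sigma_j^{-1}$ are controlled by the remainder and hence decay geometrically, so that $\sigma_j\to\sigma_\infty$;
\item only then run the decay and Schauder arguments for $\Phi\circ\sigma_\infty$.
\end{enumerate}
Without this, the bound $\sup_{A_j}|u|\le C2^{-j(2+\lambda)}$ is not established, and this is where the substance of the cited theorem lies.
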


\section{Aspects of Gromov-Hausdorff Convergence}

Throughout this section, let $y_a\in Y$ be an ordinary double point. The germ $\pare{Y,y_a}$ is analytically isomorphic to a quadric affine cone 
\begin{equation*}
    (\mathfrak{C},o) = \set{(z_1,z_2,z_3,z_4)\in \C^4}{z_1^2+z_2^2+z_3^2+z_4^2=0}
\end{equation*}
Consider the singular Ricci-flat K\"ahler metric on $Y_{\text{reg}}$ given by
\begin{align*}
    \omega_0 &=\omega_Y+\pot\psi_Y
\end{align*}
where $\psi_Y$ is provided by Corollary \ref{KahlerMainTheorem1}.
Let the metric associated to $\omega_0$ be denoted by $g_0$
on $Y_{\text{reg}}$. Finally, let $d_0$ denote the intrinsic Riemannian distance. We define the extrinsic diameter to be
\[
\mathrm{diam}_{d_0}(V) = \sup_{p,q\in V}d_0(p,q).
\]
In particular, we have the following relation between the diameters:
\[
\mathrm{diam}_{d_0}(V)\leq \mathrm{diam}^{\mathrm{int}}(V, g_0)
\]

\begin{dfn}\label{DefNoNeck}
We say that the metric $g_0$ satisfies the punctured no-neck condition if, for every $y_a\in Y_{\text{sing}}$, there are nested connected relatively compact open neighborhoods $V_{a,\delta}$ forming a fundamental neighborhood basis at $y_a$ with 
\begin{equation*}
    \overline{V_{a,\delta^\prime}}\Subset V_{a,\delta} \quad \pare{\delta^\prime<\delta}, \quad \bigcap_{\delta>0}\overline{V_{a,\delta}} = \brac{y_a},
\end{equation*}
such that the extrinsic diameter obeys
\begin{equation}\label{PuncturedNoNeck}
    \lim_{\delta\searrow 0}\operatorname{diam}_{d_0}\pare{V_{a,\delta}\setminus\brac{y_a}}=0.
\end{equation}
\end{dfn}

\begin{lem}\label{NoNeckMet} Under the standing assumptions: 
\begin{enumerate}
    \item[(1).] The metric $g_0$ is polynomially asymptotic at $y_a$ to the conifold metric $g_{\mathfrak{C}}$.
    \item[(2).] There is a neighborhood basis $V_{a,\delta}$ of $y_a$ such that
\begin{align*}
    \operatorname{diam}_{d_0}
    \pare{V_{a,\delta}\setminus\brac{y_a}}
    \le C\delta.
\end{align*}
In particular, $g_0$ satisfies the punctured no-neck condition at $y_a$.
\end{enumerate}  
\end{lem}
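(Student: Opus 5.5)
Part (1) is a direct application of Proposition~\ref{zhang} (Zhang) to the germ $(Y,y_a)\cong(\mathfrak{C},o)$. The first step is to check its hypotheses.
\begin{itemize}
\item An ordinary double point is an isolated canonical, hence log terminal, hypersurface singularity.
\item The quadric cone $\mathfrak{C}$ carries the Stenzel/Candelas--de la Ossa Ricci-flat cone metric $g_{\mathfrak{C}}$ with link $T^{1,1}=(SU(2)\times SU(2))/U(1)$, which is smooth. So $\mathfrak{C}$ is already a $K$-polystable Fano cone.
\item Consequently both steps of the two-step degeneration are trivial, and $C(Y,y_a)\cong W(Y,y_a)\cong \mathfrak{C}$ as Fano cones.
\end{itemize}
Next we check that $\omega_0=\omega_Y+\pot\psi_Y$ is a singular Kähler--Einstein metric with Ricci curvature $0\le 0$ and bounded potential. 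Boundedness of $\psi_Y$ is part of Corollary~\ref{KahlerMainTheorem1}. Ricci-flatness on $Y_{\mathrm{reg}}$ follows from $\omega_0^3=\mathrm{vol}(\theta)\Omega_Y$ and $\pot\log\Omega=0$, exactly as in Theorem~\ref{MainTheorem1NoY}. Zhang's theorem then yields $\Phi$, $\lambda>0$ and the estimates $|\nabla^k_{g_{\mathfrak{C}}}(\Phi^*g_0-g_{\mathfrak{C}})|=O(r^{\lambda-k})$, which is the claimed polynomial asymptotics. The only subtle point is identifying $g_C$ with the conifold metric: the Ricci-flat cone metric on the $K$-polystable cone is unique up to scaling and automorphism (Donaldson--Sun), so $g_C$ is $g_{\mathfrak{C}}$ up to these normalizations.

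For (2), we set $V_{a,\delta}=\Phi(\{r<\delta\})\cup\{y_a\}$ for $\delta$ small. These sets are connected (the link is connected), nested with $\overline{V_{a,\delta'}}\Subset V_{a,\delta}$, and shrink to $y_a$, so they form the required fundamental basis. By the last assertion of Proposition~\ref{zhang}, after shrinking there is a constant $\Lambda$ with $\Lambda^{-1}g_{\mathfrak{C}}\le \Phi^*g_0\le\Lambda g_{\mathfrak{C}}$ on the punctured neighborhood.

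Given two points $p,q\in V_{a,\delta}\setminus\{y_a\}$, we write them as $(r_p,\sigma_p)$ and $(r_q,\sigma_q)$ with $\sigma_p,\sigma_q\in T^{1,1}$. We join them inside the punctured cone by three pieces:
\begin{itemize}
\item the radial segment from $(r_p,\sigma_p)$ to $(\delta/2,\sigma_p)$;
\item a curve in the slice $\{r=\delta/2\}$ from $\sigma_p$ to $\sigma_q$;
\item the radial segment from $(\delta/2,\sigma_q)$ to $(r_q,\sigma_q)$.
\end{itemize}
In $g_{\mathfrak{C}}=dr^2+r^2g_{T^{1,1}}$ each radial segment has length at most $\delta$, and the middle piece has length at most $\tfrac{\delta}{2}\,\mathrm{diam}(T^{1,1})$. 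Therefore
\[
d_0(p,q)\le \sqrt{\Lambda}\,\bigl(2+\mathrm{diam}(T^{1,1})\bigr)\,\delta=:C\delta.
\]
This bounds the intrinsic distance within the punctured set, and the extrinsic $d_0$ is no larger. Letting $\delta\to0$ gives \eqref{PuncturedNoNeck}.

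The main obstacle is confirming the hypotheses of Proposition~\ref{zhang}. The hardest part is the interpretation of a ``singular Kähler--Einstein metric with bounded potential'' on the projective variety $Y$, given that $\omega_0$ solves the Monge--Ampère equation globally only in the pluripotential sense. If needed, the paper can cite Hein--Sun~\cite{HeinSun2017} for the nodal case directly.
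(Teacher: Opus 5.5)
Your proposal is correct and follows the paper's proof essentially step for step. You verify Zhang's hypotheses at the node (log terminal; Calabi--Yau cone with smooth link $T^{1,1}$; trivial two-step degeneration; Ricci-flat limit with bounded local potential), extract the uniform equivalence of $\Phi^*g_0$ and $g_{\mathfrak{C}}$, and then join points radially to a slice and across the link, just as the paper does. The differences are only cosmetic: you use the slice $r=\delta/2$ where the paper lets $s\searrow 0$ to get $d_{\mathfrak{C}}\le r_1+r_2$, and you add an explicit (harmless) Donaldson--Sun remark identifying Zhang's cone metric with $g_{\mathfrak{C}}$ up to scaling and automorphism.
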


\begin{proof} We split the proof into two parts:
\begin{enumerate}
    \item[(1).] We begin by verifying the hypotheses of Zhang's result \cite{Zhang2025}. The ordinary double point $(Y,y_a)$ is an isolated log terminal singularity and is biholomorphic to the vertex germ of $\mathfrak{C}$. Equipped with its standard Ricci-flat Kähler cone metric
    \[
    \omega_{\mathfrak{C}} = \sqrt{-1}\partial\overline{\partial} \left(\sum_{i=1}^4 |z_i|^2\right)^{2/3}
    \]
    the cone $\mathfrak{C}$ is a Calabi-Yau cone with smooth link $T^{1,1}\cong S^2\times S^3$ as shown in \cite{HeinSun2017}. Let $\xi_\mathfrak{C}$ be the Reeb field of this metric. Zhang observes after Theorem 1.4 that, for a germ biholomorphic to a Calabi-Yau cone, his cone hypothesis is satisfied. Applied here,
    \[
    W(Y,y_a)\cong C(Y,y_a)\cong (\mathfrak{C},o)
    \] 
    as Fano cones, where $(\mathfrak{C},o)$ is equipped with $\xi_\mathfrak{C}$. 
    Furthermore, the limiting metric is Ricci-flat and has a bounded potential on $Y$ near the ordinary double point $y_a$. To see this, choose a local potential $h$ for the Kähler form $\omega_Y$ on an open neighborhood $U_a$ of $y_a$, so that
\begin{align*}
    \omega_Y &= \pot h.
\end{align*}
Then on $U_a\setminus \{y_a\}$,
\begin{equation*}
    \omega_0 = \pot\pare{h+\psi_Y}.
\end{equation*}
The function $h+\psi_Y$ is a bounded plurisubharmonic local potential for $\omega_0$. By Theorem \ref{MainTheorem1}, $\omega_0$ is smooth and positive on $U_a\setminus \{y_a\}$ and $\operatorname{Ric}\pare{\omega_0}= 0$.
The metric $\omega_0$ is a singular Kähler-Einstein metric with bounded potential. Thus, all the hypotheses of Proposition \ref{zhang} are satisfied. By Zhang's observation,  the asymptotic identification in the cone-germ case may be chosen as a biholomorphism $\Phi\colon(U_o,o)\to (U_a,y_a)$ between neighborhoods of the vertices. For some $\lambda>0$, on $U_o\setminus \{o\}$ 
\begin{equation*}
    \abs{\Phi^*g_0-g_{\mathfrak{C}}}_{g_{\mathfrak{C}}
    } = O\pare{r^{\lambda}}.
\end{equation*}
In particular, after shrinking $U_a$, there is a constant $C>0$ such that
\begin{equation}\label{conebound}
    C^{-1}g_{\mathfrak{C}} \le \Phi^* g_0 \le Cg_{\mathfrak{C}}.
\end{equation}
\item[(2).] We now verify Definition \ref{DefNoNeck}. Let 
\begin{equation*}
    V_{a,\delta}=\Phi\pare{\brac{0<r<\delta}}\cup\{y_a\}.
\end{equation*}
Since $\Phi$ is a biholomorphism of pointed neighborhoods, it is easily verified that the $V_{a,\delta}$ have all of the required properties of Definition \ref{DefNoNeck}, except for Equation \ref{PuncturedNoNeck}. For the last property, take two points $p_1=(r_1, \xi_1)$ and $p_2=\pare{r_2,\xi_2}$ on $\mathfrak{C}$, ordered so that $r_1\leq r_2$. In order to estimate the distance between $p_1$ and $p_2$, let $0<s<r_1$. We radially join $p_1,p_2$ to the sphere $\mathcal{S}=\brac{(r, \xi)\in \mathfrak{C}: r=s}$ and then join the resulting $p_1^*,p_2^*$ along the sphere. The resulting path gives
\begin{align*}
    d_{\mathfrak{C}}\pare{p_1,p_2} &\le \pare{r_1-s}+\pare{r_2-s}+s\cdot d_{T^{1,1}}\pare{\xi_1,\xi_2}.
\end{align*}
Since $T^{1,1}$ is compact, it has finite diameter, and by letting $s\searrow 0$, we find
\begin{align*}
    d_{\mathfrak{C}}\pare{p_1,p_2} &\le r_1+r_2< 2\delta.
\end{align*}
The metric comparison Equation \ref{conebound} therefore gives
\begin{align*}
    \operatorname{diam}_{d_0}\pare{V_{a,\delta}\setminus\brac{y_a}} &\le C\delta
\end{align*}
and taking $\delta\searrow 0$, we establish the no-neck condition.\qedhere
\end{enumerate}
\end{proof}

We now prove a compactness result for the metric completion of $(Y_{reg},d_0)$. 

\begin{lem}\label{CompletionHomeo}
Under the standing assumptions, the metric completion $(\tilde{Y},d_{\tilde Y})$ of $(Y_{reg},d_0)$ is compact. Moreover, for every $y_a$, there is a unique completion point $\tilde{y}_a$, and the identity on $Y_{\text{reg}}$ extends to a homeomorphism
\begin{equation*}
    \iota\colon Y\to \tilde{Y},\quad \iota\pare{y_a}=\tilde{y}_a.
\end{equation*}
\end{lem}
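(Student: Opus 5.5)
The plan is to build the extension $\iota$ by hand, using the punctured no-neck condition of Lemma \ref{NoNeckMet}. Two facts drive everything. First, on compact subsets of $Y_{\text{reg}}$, $d_0$ is locally comparable to the distance of the smooth metric $\omega_Y$. This holds because $\omega_0$ is smooth and positive on $Y_{\text{reg}}$ by Theorem \ref{MainTheorem1} and Corollary \ref{KahlerMainTheorem1}, and $Y_{\text{reg}}$ is locally path-connected. Second, near each node the sets $V_{a,\delta}$ shrink to $y_a$ with $\operatorname{diam}_{d_0}(V_{a,\delta}\setminus\{y_a\})\le C\delta$.

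\textbf{Step 1: the completion points.} For each $y_a$, pick any sequence $p_k\in Y_{\text{reg}}$ with $p_k\to y_a$ in the topology of $Y$. Since $V_{a,\delta}$ is a neighborhood basis, $p_k$ eventually lies in $V_{a,\delta}$ for every $\delta$. The diameter bound then makes $(p_k)$ $d_0$-Cauchy. The same bound shows that any two such sequences are equivalent, so they define a single point $\tilde y_a\in\tilde Y$. We set $\iota(y_a)=\tilde y_a$ and let $\iota$ be the identity on $Y_{\text{reg}}$.

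\textbf{Step 2: $\iota$ is a bijection.} Take a $d_0$-Cauchy sequence $p_k$ in $Y_{\text{reg}}$. Since $Y$ is compact, a subsequence converges in $Y$ to some $y$.
\begin{itemize}
\item If $y\in Y_{\text{reg}}$, local comparability forces $d_0(p_k,y)\to0$, so the class is already represented by a point of $Y_{\text{reg}}$.
\item If $y=y_a$, the class equals $\tilde y_a$ by Step 1.
\end{itemize}
This gives surjectivity. For injectivity, the points $\tilde y_a$ are distinct from each other and from points of $Y_{\text{reg}}$. The reason is that $d_0(Y_{\text{reg}}\cap K,\,V_{a,\delta})$ is bounded below for $K$ a compact set avoiding $y_a$. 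To see this, any path from $K$ into a small $V_{a,\delta}$ must cross an annular region $V_{a,\delta_0}\setminus\overline{V_{a,\delta_1}}$. On that region $g_0$ is uniformly comparable to $\omega_Y$, or to the cone metric via \eqref{conebound}, so crossing it costs length at least some $c>0$.

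This lower bound is the main obstacle. The no-neck condition only gives upper bounds on distances, and the lower bound genuinely needs the comparability of $g_0$ on a fixed compact annulus. I would argue it via \eqref{conebound}: the $g_{\mathfrak C}$-length of any path from $\{r=\delta_0\}$ to $\{r\le\delta_1\}$ is at least $\delta_0-\delta_1$.

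\textbf{Step 3: continuity and compactness.} Continuity of $\iota$ at regular points follows from local comparability. At $y_a$, any $y\in V_{a,\delta}$ satisfies $d_{\tilde Y}(\iota y,\tilde y_a)\le C\delta$, again by Step 1. Since $Y$ is compact and $\iota$ is a continuous surjection, $\tilde Y=\iota(Y)$ is compact. A continuous bijection from a compact space onto a Hausdorff (metric) space is a homeomorphism, which completes the proof.
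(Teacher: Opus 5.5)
Your proposal is correct and follows essentially the same route as the paper: completion points $\tilde y_a$ as Cauchy classes from the no-neck bound, surjectivity via sequential compactness of $Y$, separation of completion points by the cost of crossing a fixed region where $g_0$ is uniformly controlled, continuity at the nodes from the diameter bound, and the compact-to-Hausdorff argument. The only differences are that you obtain compactness of $\tilde Y$ as the continuous image $\iota(Y)$ rather than from total boundedness of $(Y_{\text{reg}},d_0)$, and that you make the separation step quantitative via \eqref{conebound} while also checking that $\tilde y_a$ differs from every regular point, which the paper leaves implicit.
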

\begin{proof}
For fixed $\delta>0$, the complement of the union of the $V_{a,\delta}$ is compactly contained in $Y_{\text{reg}}$, and thus is totally bounded for $d_0$. The sets $V_{a,\delta}\setminus\brac{y_a}$ can have arbitrarily small $d_0$-diameter, so $Y_{\text{reg}}$ is totally bounded, and its completion is consequently compact.

Choose $x_{a,k}\in V_{a,\delta_k}\setminus\brac{y_a}$ for a sequence $\delta_k\searrow 0$. Then $\pare{x_{a,k}}$ is Cauchy and its class is independent of all choices. Call this class $\tilde{y}_a$. The punctured no-neck condition also gives us that any sequence converging to $y_a$ in the topology of $Y$ is converging metrically to $\tilde{y}_a$.

{After choosing pairwise disjoint neighborhoods for two distinct nodes, every path joining the two neighborhoods must cross a fixed compact regular core. On this core, $g_0$ is uniformly positive, so the boundary components of the neighborhoods have positive mutual distance}. This means that distinct nodes give distinct completion points.

Define $\iota$ to be as above. Every Cauchy sequence has a convergent subsequence in the compact space $Y$, and our work above shows that $\iota$ is injective and surjective. Because $g_0$ is smooth, $\iota$ is continuous on $Y_{\text{reg}}$, and the continuity at the nodes follows from Equation \ref{PuncturedNoNeck}. Because $Y$ is compact and the metric completion is Hausdorff, $\iota$ is a homeomorphism.
\end{proof}

To finish, we prove the following more general result:

\begin{prop}\label{GHConverge}
Let $\pi\colon X\to Y$ be a continuous surjection between compact spaces $X$ and $Y$, whose restriction $X\setminus E\to Y\setminus S$ is a diffeomorphism, where $S$ is a finite nonempty set and $E=\pi^{-1}\pare{S}$. Let $g_j$ be a family of Riemannian metrics on $X$ and let $g_0$ be a smooth metric on $Y\setminus S$. Suppose the following conditions hold:
\begin{itemize}
    \item[(i)] $g_j\to \pi^*g_0$ smoothly on compact subsets of $X\setminus E$;
    \item[(ii)] the identity on $Y\setminus S$ extends to a homeomorphism onto the metric completion $\pare{\tilde{Y},d_{\tilde{Y}}}$ of $\pare{Y\setminus S,d_0}$
    \begin{align*}
        \iota\colon Y\to \pare{\tilde{Y},d_{\tilde{Y}}};
    \end{align*}
    \item[(iii)] for every $s_a\in S$, there are nested neighborhoods $V_{a,\delta}$ and $U_{a,\delta}=\pi^{-1}\pare{V_{a,\delta}}$ such that
    \begin{align*}
        \lim_{\delta\searrow 0}\operatorname{diam}_{d_{\tilde{Y}}}\pare{\iota\pare{V_{a,\delta}}}=0
    \end{align*}
    and 
    \begin{align*}
        \lim_{\delta\searrow 0}\limsup_{j\nearrow\infty}\operatorname{diam}^{\mathrm{int}}\pare{U_{a,\delta},g_j}=0.
    \end{align*}
\end{itemize}
Then $\pare{X,g_j}$ converges in Gromov-Hausdorff topology to the completion $(\tilde{Y},d_{\tilde{Y}})$.
\end{prop}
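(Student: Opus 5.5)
The plan is to build, for each $\epsilon>0$, an explicit $\epsilon$-Gromov-Hausdorff approximation $F_j\colon (X,d_{g_j})\to(\tilde Y,d_{\tilde Y})$, namely $F_j=\iota\circ\pi$. It is then enough to show two things: $F_j$ is surjective, and its distortion $\sup_{p,q}\abs{d_{\tilde Y}(F_j p,F_j q)-d_{g_j}(p,q)}$ tends to $0$ as $j\to\infty$ followed by $\delta\searrow 0$.

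Surjectivity is immediate from (ii) and the surjectivity of $\pi$. So fix $\eta>0$ and choose $\delta$ with two properties. First, $\operatorname{diam}_{d_{\tilde Y}}\iota(V_{a,\delta})<\eta$. Second, $\limsup_j\operatorname{diam}^{\mathrm{int}}(U_{a,\delta},g_j)<\eta$ for every $a$, which is possible by (iii). Set $K_\delta=X\setminus\bigcup_a U_{a,\delta/2}$; this is compact in $X\setminus E$, and on a slightly larger compact set the convergence $g_j\to\pi^*g_0$ in (i) is uniform. By the triangle inequality and the two diameter bounds, any point of $U_{a,\delta}$ can be replaced by a point on $\partial U_{a,\delta}\subset K_\delta$ at a cost of $O(\eta)$ in both metrics. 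So it suffices to compare $d_{g_j}(p,q)$ with $d_0(\pi p,\pi q)$ for $p,q\in K_\delta$. Here I use that $d_{\tilde Y}$ restricted to $Y\setminus S$ equals $d_0$.

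\emph{Upper bound.} Take a $d_0$-almost minimizing path $\sigma$ in $Y\setminus S$ between $\pi p$ and $\pi q$. Each time $\sigma$ enters $V_{a,\delta}$, I cut out the portion between its first entry and its last exit. In $X$ I replace that portion by a path inside $U_{a,\delta}$ of $g_j$-length less than $\eta$, which exists by the intrinsic diameter bound for large $j$. The remaining pieces lie in $\pi^{-1}(Y\setminus\bigcup V_{a,\delta})$, which has compact closure in $X\setminus E$, so their $g_j$-lengths are at most $(1+o(1))$ times their $g_0$-lengths. Since $S$ is finite, this gives $d_{g_j}(p,q)\le d_0(\pi p,\pi q)+o(1)+C\eta$.

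\emph{Lower bound.} Symmetrically, take a $g_j$-almost minimizing path in $X$. Wherever it meets some $U_{a,\delta}$, I cut out the portion from first entry to last exit and bridge it in $\tilde Y$ by the $d_{\tilde Y}$-diameter bound for $\iota(V_{a,\delta})$, at cost at most $\eta$. The rest lies in a fixed compact subset of $X\setminus E$, where $\pi^*g_0\le(1+o(1))g_j$. Projecting and using the triangle inequality in $\tilde Y$ gives $d_{\tilde Y}(F_jp,F_jq)\le d_{g_j}(p,q)+o(1)+C\eta$.

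The main obstacle is the uniformity of the metric comparison on the compact core. The lengths of the almost-minimizing paths must be uniformly bounded, and they are: the $g_j$-diameters of $X$ are bounded using the same decomposition, and $\tilde Y$ is compact by (ii). With that, the $(1+o(1))$ factor produces only an $o(1)$ additive error. A second point needing care is that an almost-minimizer may enter and leave the same $V_{a,\delta}$ several times; cutting from first entry to last exit handles this. Letting $j\to\infty$ and then $\eta\to0$ gives Gromov-Hausdorff distance tending to zero.
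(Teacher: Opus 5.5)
Your proposal is correct and follows essentially the same route as the paper. Both use the correspondence given by the graph of $\iota\circ\pi$, cut almost-minimizing paths from first entry to last exit of the small neighborhoods and bridge the gaps with the two diameter bounds from (iii), and use the resulting uniform diameter bound to turn the multiplicative $(1+o(1))$ core comparison into an additive error. The only cosmetic difference is that the paper cuts at a smaller neighborhood $V'_{a,\delta}\Subset V_{a,\delta}$ and bridges inside $U_{a,\delta}$, whereas you cut at $\partial V_{a,\delta}$ itself and bridge through its closure.
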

\begin{proof}
We prove convergence by showing that the graph of \(\iota\circ\pi\) is a correspondence whose distortion tends to zero. Fix $\tau>0$, let $|S|=N$, and choose neighborhoods of each $s_a\in S$ as in (iii) satisfying 
\begin{equation}\label{Ybound}
\mathrm{diam}_{\tilde Y}(\iota(V_{a,\delta}))<\tau
\end{equation}
and, for sufficiently large $j$,
\begin{equation}\label{Xbound}
\mathrm{diam}^{\mathrm{int}}(U_{a,\delta},g_j)<2\tau    
\end{equation}
and such that all neighborhoods are pairwise disjoint as $a$ varies. Furthermore, choose smaller neighborhoods $V_{a,\delta}^\prime\Subset V_{a,\delta}$ and $U_{a,\delta}^\prime=\pi^{-1}\pare{V_{a,\delta}^\prime}\Subset U_{a,\delta}$ such that $s_a\in V'_{a,\delta}$ and $\pi^{-1}(s_a)\subseteq  U_{a,\delta}'$. Define  
\[ 
\mathrm{Core}(X) = X\setminus \Big(\bigcup_a U_{a,\delta}'\Big) 
\] 
and similarly for $\mathrm{Core}(Y)$. Identify via $\pi$
\[ 
\pi|_{\mathrm{Core}(X)}\colon\mathrm{Core}(X)\to \mathrm{Core}(Y).\]
On $\mathrm{Core}(X)$, assumption (i) implies   \begin{align*}
\pare{1-o_j\pare{1}}\pi^*{g_0}\le g_j\le \pare{1+o_j\pare{1}}\pi^*{g_0}. 
\end{align*}
Thus, the lengths of curves in the cores are comparable by factors uniformly tending to $1$.
Consider an almost-minimizing path $\gamma\colon[0,1]\to Y\setminus S$ with endpoints $y_1, y_2\in \mathrm{Core}(Y)$ and corresponding endpoints $\pi^{-1}(y_1)=x_1$ and $\pi^{-1}(y_2)=x_2$ in $X$. Near each point $s_a$, if $\gamma$ enters $V_{a,\delta}'$ denote the first entrance to and last exit from $V_{a,\delta}'$ by $y_{a,1}^*$ and $y_{a,2}^*$ respectively, which are at most a distance of $\tau$ apart by Equation \ref{Ybound}, and discard the portion of $\gamma$ between $y_{a,1}^*$ and $y_{a,2}^*$. Repeat this process for the remaining path and each neighborhood $V_{a,\delta}'$ encountered. At most $N$ portions are discarded, with the remaining paths all lying in $\operatorname{Core}\pare{Y}$. Lifting the retained paths, we can join the endpoints $\pi^{-1}(y_{a,1}^*)=x_{a,1}^*$ and $ \pi^{-1}(y_{a,2}^*)=x_{a,2}^*$ within $U_{a,\delta}$ which are at most a distance of $2\tau$ apart by Equation \ref{Xbound}. If $a_j\to 1$ denotes the length-comparison factor between the metrics $g_j$ and $\pi^*g_0$ on the cores, letting the almost-minimizing errors tend to zero, by the triangle inequality
\begin{equation}\label{gjbound}
    d_{g_j}\pare{x_{1},x_{2}} \le a_j d_{\tilde{Y}}\pare{y_{1},y_{2}}+2N\tau.
\end{equation}
Conversely, we can begin with a $g_j$-almost-minimizing path in $X$ between two points $x_{1},x_{2}\in \mathrm{Core}(X)$ and perform a similar procedure on each $U_{a,\delta}'$ and project the retained paths to $Y$ via $\pi$. Applying the triangle inequality in this direction,
\begin{equation}\label{completionbound}
d_{\tilde{Y}}\pare{y_{1},y_{2}} \le a_j d_{g_j}\pare{x_{1},x_{2}}+N\tau.    
\end{equation}
See Figure \ref{fig:5.1proof} for a schematic of the argument.
\begin{figure}[H]
    \centering
    \includegraphics[width=0.75\linewidth]{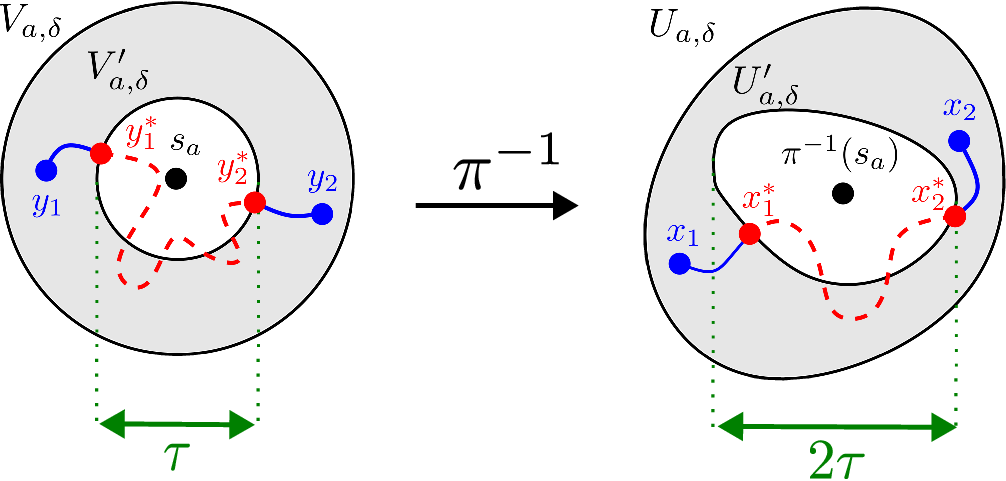}
    \caption{Schematic of the argument in the proof of Proposition \ref{GHConverge} locally near each $s_a$. On the left-hand side, the blue path is the part of the path which remains, while the red dashed path between the first and last intersection with $\partial U_{a,\delta}'$ is discarded. On the right-hand side is the image under $\pi$. The diameter bounds are given in green below.}
    \label{fig:5.1proof}
\end{figure}

We can now bound the $g_j$ distance between arbitrary points in $X$. For $i=1,2$, if $x_i\in \mathrm{Core}\pare{X}$, then set $p_i=x_i$. Otherwise, if $x_i\in U^\prime_{a,\delta}$, set $p_i$ to be a regular point in $U_{a,\delta}\cap \operatorname{Core}\pare{X}$. In either case, we see
\begin{equation*}
        d_{g_j}\pare{x_i,p_{i}} \le 2\tau.
    \end{equation*}
Set $G=\iota\circ\pi\colon X\to\tilde Y$, and define $y_i=G(x_i)$ and $q_i=G(p_i)$ for $i=1,2$. Then
    \begin{equation*}
        d_{\tilde{Y}}\pare{y_i,q_{i}} \le \tau.
    \end{equation*}
Applying Equation \ref{gjbound} and Equation \ref{completionbound} to $p_1$ and $p_2$ and the endpoint bounds gives
\begin{align}\label{diambounded}
d_{g_j}(x_1,x_2)
&\le a_jd_{\tilde Y}(G(x_1),G(x_2))
 +(2N+2a_j+4)\tau,\\
\label{diambounded2}d_{\tilde Y}(G(x_1),G(x_2))
&\le a_jd_{g_j}(x_1,x_2)
 +(N+4a_j+2)\tau.
\end{align}
Since the metric completion $(\tilde{Y}, d_{\tilde Y})$ is compact, by (ii) we have that 
$\mathrm{diam}(\tilde Y, d_{\tilde Y})=D<\infty$, so Equation \ref{diambounded} becomes
\begin{equation*}
    \mathrm{diam}(X,g_j)\leq a_jD+(2N+2a_j+4)\tau.
\end{equation*}
In particular, the diameter bound is uniform in $j$. Hence, by Equation \ref{diambounded} and Equation \ref{diambounded2}
\begin{align*}
d_{g_j}(x_1,x_2)-d_{\tilde Y}(G(x_1),G(x_2))
&\le (a_j-1)D+(2N+2a_j+4)\tau,\\
d_{\tilde Y}(G(x_1),G(x_2))-d_{g_j}(x_1,x_2)
&\le (a_j-1)\operatorname{diam}(X,g_j)
 +(N+4a_j+2)\tau.
\end{align*}
Since $\pi$ is surjective and $\iota$ is a homeomorphism by (ii), $G\colon X\to \tilde{Y}$ is surjective. Furthermore, $\mathrm{Graph}(G)$ is a correspondence between $X$ and $\tilde{Y}$, and its distortion satisfies
 \begin{equation*}
    \sup_{x_1,x_2\in X}\abs{d_{g_j}\pare{x_1,x_2}-d_{\tilde{Y}}\pare{G\pare{x_1},G\pare{x_2}}} \le CN\tau+o_j\pare{1}.
\end{equation*}
Taking $j\nearrow\infty$ and then $\tau\searrow 0$ finishes the proof.
\end{proof}

\begin{proof}[Proof of Theorem \ref{MainTheorem2}]
    Let $\epsilon_j\searrow 0$ and set $g_{j}=g_{\epsilon_j}$. By Theorem \ref{MainTheorem1}, Lemma \ref{NoNeckMet}, and Lemma \ref{CompletionHomeo} we meet conditions (i) and (ii) of Proposition \ref{GHConverge}. For each ordinary double point $y_a\in Y$, let $U_{a,\delta}$ be the saturated neighborhoods from Proposition \ref{LocalDiameterBound} and set $V_{a,\delta}=\pi\pare{U_{a,\delta}}$. These $V_{a,\delta}$ form a neighborhood basis satisfying $U_{a,\delta}=\pi^{-1}\pare{V_{a,\delta}}$. 

    Given $\tau>0$, Lemma \ref{NoNeckMet} provides an open neighborhood $W$ of $y_a$ such that $\operatorname{diam}_{d_0}\pare{W\setminus\brac{y_a}}<\tau$. For sufficiently small $\delta$, $V_{a,\delta}\subset W$, so $\operatorname{diam}_{d_0}\pare{V_{a,\delta}\setminus\brac{y_a}}<\tau$. The completion map $\iota$ preserves distances on $Y_{\text{reg}}$ and $\iota\pare{y_a}$ lies in the closure of $\iota\pare{V_{a,\delta}\setminus\brac{y_a}}$, so adjoining this point does not affect the diameter. Thus, $\lim_{\delta\searrow 0}\operatorname{diam}_{d_{\tilde{Y}}}\pare{\iota\pare{V_{a,\delta}}}=0$. Along with Proposition \ref{LocalDiameterBound}, these neighborhoods satisfy condition (iii) of Proposition \ref{GHConverge}. Thus, $\pare{X,g_j}$ converges in Gromov-Hausdorff topology to ${\pare{\tilde{Y},d_{\tilde{Y}}}}$. We see that this convergence is independent of the sequence of $\epsilon_j$. Hence, the full family converges.
\end{proof}

\end{document}